\documentclass[11pt,a4paper,twoside]{article}
\usepackage[utf8]{inputenc}
\usepackage[T1]{fontenc}
\usepackage{amsmath, amssymb, amsthm}
\usepackage{mathtools}
\usepackage{mathrsfs}
\usepackage{graphicx}

\usepackage{newtxtext,newtxmath} 
\usepackage[margin=1in, top=1.2in, bottom=1.2in, headheight=15pt]{geometry}
\usepackage{titlesec}
\usepackage{fancyhdr}
\usepackage{caption}
\usepackage{hyperref}  

\titleformat{\section}
{\Large\bfseries}
{\thesection}
{0.5em}
{\MakeUppercase}

\titleformat{\subsection}
{\large\bfseries}
{\thesubsection}
{1em}
{}

\titleformat{\subsubsection}
{\normalsize\bfseries}
{\thesubsubsection}
{1em}
{}

\newtheoremstyle{standard-theorem}
{1em} 
{1em} 
{\itshape} 
{} 
{\bfseries} 
{.} 
{.5em} 
{} 
\theoremstyle{standard-theorem}
\newtheorem{theorem}{Theorem}[section]
\newtheorem{proposition}[theorem]{Proposition}
\newtheorem{lemma}[theorem]{Lemma}
\newtheorem{corollary}[theorem]{Corollary}
\newtheorem{definition}[theorem]{Definition}

\newtheoremstyle{standard-remark}
{1em}{1em}{}{}{\bfseries}{.}{.5em}{}
\theoremstyle{standard-remark}
\newtheorem{remark}[theorem]{Remark}

\DeclareMathOperator{\Diff}{Diff}
\DeclareMathOperator{\Homeo}{Homeo}

\DeclareMathOperator{\Hom}{Hom}

\title{\Large\bfseries $C^0$-Analogue of Ismagilov's Theorem}
\author{S. Tchuiaga\thanks{Corresponding author.}\\[2mm]
	$^a$Department of Mathematics, University of Buea, South West Region, Cameroon\\[2mm]
	{\tt tchuiaga.kameni@ubuea.cm}
}
\date{}

\begin{document}
	
	\maketitle
	
	\begin{abstract}
		We establish a $C^0$-analogue of Ismagilov's theorem on the first continuous 
		cohomology of volume-preserving diffeomorphisms. For a closed oriented 
		manifold, we prove that the first continuous cohomology of the identity 
		component of the group of volume-preserving homeomorphisms, with coefficients 
		in the Banach space of continuous zero-mean functions, is isomorphic 
		to the first de Rham cohomology. The proof introduces a topological transport 
		theory for volume-preserving isotopies, yielding a continuous volume flux 
		homomorphism and an associated transport cocycle. 
		As an application, we obtain a resolution of the volume-preserving $C^0$-Flux Conjecture.
	\end{abstract}
	
	\vspace{1em}
	\noindent\textbf{MSC 2020:} 53C24, 54A20, 58D05 \\
	\noindent\textbf{Keywords:} Rigidity, Convergence in general topology, Groups of diffeomorphisms and homeomorphisms, Global Analysis
	\vspace{2ex}
	
	\section{Introduction}
	
	The continuous cohomology of groups of volume-preserving transformations provides a powerful link between topology, geometry, and dynamics. A fundamental result in this direction is Ismagilov's cohomological theorem, which identifies the first continuous cohomology of the identity component of the group of smooth volume-preserving diffeomorphisms (with coefficients in smooth zero-mean functions) with the first de Rham cohomology of the underlying manifold. This theorem reveals that every continuous crossed homomorphism is determined, up to cohomology, by a closed differential $1$-form, thereby providing a cohomological interpretation of the classical volume flux homomorphism.\\
	
	A natural question is whether this correspondence extends beyond the smooth category. The identity component of the group of volume-preserving homeomorphisms plays a central role in topological dynamics, $C^{0}$-rigidity, and conservative topology. However, the lack of differentiability prevents the direct use of the classical volume flux homomorphism and, consequently, the arguments underlying Ismagilov's theorem. Establishing a $C^{0}$-analogue therefore requires a genuinely topological replacement for the smooth flux.\\
	
	The principal objective of this paper is to establish a $C^{0}$-analogue of Ismagilov's theorem for the identity component of the group of volume-preserving homeomorphisms of a closed oriented manifold. Our main result identifies the first continuous cohomology of this group with the first de Rham cohomology of the manifold, showing that the classical correspondence between crossed homomorphisms and closed differential forms persists in the topological setting.\\
	
	The key ingredient is a topological transport theory for volume-preserving isotopies. This theory is the natural volume-preserving analogue of the $C^{0}$-transport framework developed for symplectic homeomorphisms in \cite{TMH}. It provides a well-defined continuous volume flux homomorphism together with an associated transport cocycle, furnishing the topological substitute for the smooth flux required in the cohomological argument. While the transport machinery follows the strategy developed in the symplectic setting, its adaptation to volume-preserving homeomorphisms supplies the essential bridge between topological dynamics and continuous group cohomology. The novelty of the present paper lies not in the construction of the transport theory itself, but in the realization that it provides the missing ingredient needed to extend Ismagilov's cohomological theorem from smooth volume-preserving diffeomorphisms to volume-preserving homeomorphisms.\\
	
	Our main theorem may be stated as follows.
	
	\medskip
	
	\noindent\textbf{Main Theorem.}
	\emph{Let $\Homeo_0^\Omega(M)$ be the identity component of the group of volume-preserving homeomorphisms (in the $C^{0}$-sense) of a closed oriented manifold $M$. Every continuous crossed homomorphism from $\Homeo_0^\Omega(M)$ to the Banach space $C^0_0(M)$ of continuous zero-mean functions is cohomologous to one induced by a closed differential $1$-form. Consequently,
		$$
		H^1_{\mathrm{cont}}\left(\Homeo_0^\Omega(M),C^0_0(M)\right)
		\cong H^1(M;\mathbb{R}).
		$$}
	\medskip
	
	To prove this theorem, we must bridge topological dynamics with classical smooth cohomology. Since the classical flux is intrinsically path-dependent and requires smooth vector fields, the argument proceeds via a four-step strategy of path-approximation and continuous extension:
	\begin{enumerate}
		\item \emph{Approximation of Isotopies:} While classical M\"uller-Sikorav theorems guarantee that smooth volume-preserving diffeomorphisms are $C^0$-dense in the space of homeomorphisms, the flux depends on paths. By exploiting the local contractibility of the identity component $\Homeo_0^\Omega(M)$ of the group of volume-preserving homeomorphisms, we first establish that smooth volume-preserving \emph{isotopies} are uniformly $C^0$-dense in the space of continuous volume-preserving isotopies. 
		\item \emph{Construction of the Topological Cocycle:} This path-density allows us to define the intrinsic continuous flux as the uniform $C^0$-limit of smooth fluxes. For every closed $1$-form $\alpha$, we can thus associate a globally well-defined, continuous $1$-cocycle $\tau_\alpha(h)$, establishing a natural map $[\alpha] \mapsto [\tau_\alpha]$ from the de Rham cohomology into the continuous group cohomology of $\Homeo_0^\Omega(M)$.
		\item \emph{Injectivity via Smooth Bootstrapping:} To show this map is injective, we assume $\tau_\alpha$ is a coboundary, which yields an identity $\tau_\alpha(h) = f - f \circ h$ for some continuous function $f$. By restricting $h$ to smooth volume-preserving flows and taking directional derivatives along divergence-free vector fields, we bootstrap the regularity of $f$, proving it must be smooth. We then appeal to Ismagilov's classical theorem to conclude that $[\alpha] = 0$.
		\item \emph{Surjectivity via $C^0$-Density:} Given an arbitrary continuous $1$-cocycle on the topological group, we restrict it to the dense subgroup of smooth volume-preserving diffeomorphisms. By Ismagilov's classical theorem, this smooth restriction is cohomologous to the flux of a closed $1$-form $\alpha$. Because smooth volume-preserving diffeomorphisms isotopic to the identity are $C^0$-dense in $\Homeo_0^\Omega(M)$, this coboundary relation extends uniquely by continuity to the entire topological group.
	\end{enumerate}
	
	This theorem extends Ismagilov's classical result from smooth volume-preserving diffeomorphisms to volume-preserving homeomorphisms. It shows that the first continuous cohomology is insensitive to the loss of differentiability and remains governed entirely by the topology of the underlying manifold.
	
	Beyond its intrinsic cohomological significance, the theorem provides a new interpretation of the topological volume flux and yields a resolution of the volume-preserving $C^{0}$-Flux Conjecture. These results illustrate that the transport theory developed in the $C^{0}$-category is sufficiently robust to recover classical cohomological rigidity phenomena previously known only in the smooth setting.\\
	
	The organization of the paper is as follows. Section~2 recalls the necessary background on volume-preserving homeomorphisms and the topological transport theory. Section~3 develops the continuous volume flux and its associated transport cocycle. Section~4 establishes the continuous crossed homomorphisms arising from the transport construction and contains the proof of the $C^{0}$-analogue of Ismagilov's cohomological theorem. The final section discusses applications, including a resolution of the volume-preserving $C^{0}$-Flux Conjecture and several consequences for continuous group cohomology.
	
	\section{Preliminaries}
	
	Let $(M,g)$ be an $n$-dimensional closed Riemannian manifold equipped with a normalized orientation form $\Omega$ (i.e., $\int_M \Omega = 1$). Given any differential $p$-form $\alpha$ on $M$, let $\Diff^{\infty}(M,\alpha)$ denote the group of all diffeomorphisms $\phi$ from $M$ to $M$ that preserve $\alpha$, namely $\phi^{*}(\alpha)=\alpha$. The group $\Diff^{\infty}(M)$ is equipped with the standard $C^\infty$ compact-open topology (the Whitney $C^\infty$-topology), with respect to which it is a Fr\'echet Lie group; the subgroup $\Diff^{\infty}(M,\alpha)$ inherits this topology. An isotopy of homeomorphisms (resp. diffeomorphisms) $\Phi = \{\phi^{t}\}_t$ of $M$ is a continuous (resp. smooth) map from $[0,1]$ into $\Homeo(M)$ (resp. $\Diff^{\infty}(M)$) such that $\phi_0 = \mathrm{id}_M$. \\
	
	Denote by $\mathcal{P}\mathrm{Diff}^\infty_\alpha(M)$ the space of all smooth isotopies in $\Diff^{\infty}(M,\alpha)$ and by $G_{\alpha}(M)$ (resp. $\Homeo_0(M)$) the set of all time-one maps of all isotopies of $\Diff^{\infty}(M,\alpha)$ (resp. $\Homeo(M)$). We shall also denote by $\Homeo(M, \Omega)$ the group of all volume-preserving homeomorphisms of an oriented manifold $(M, \Omega)$, by $\Homeo^\Omega_0(M)$ its identity component, and by $\mathcal{P}\Homeo^\Omega_0(M)$ the space of all continuous isotopies in $\Homeo^\Omega_0(M)$ starting at the identity map. Let $\mathbb{G}^{\Omega}(M)$ denote the set of all homeomorphisms $h\in \Homeo(M)$ such that there exists a Cauchy sequence $(\Phi_i)_i = (\{\phi_i^t\}_t)_i\subset\mathcal{P}\mathrm{Diff}^\infty_\Omega(M)$ in the $C^0$-topology satisfying $h := \lim_{C^0}(\phi_i^1)$. 
	
	\subsection{The $C^0$-metric}
	
	Let $d_g$ be the distance induced by the Riemannian metric $g$ on $M$.
	Consider on $\Homeo(M)$ the distance
	\begin{equation}\label{eq:d0}
		d_0(f,h) = \max\Bigl\{ \sup_{x\in M} d_g(f(x),h(x)),\;
		\sup_{x\in M} d_g(f^{-1}(x),h^{-1}(x)) \Bigr\},
	\end{equation}
	and on $\mathcal{P}(\Homeo(M),\mathrm{id}_M)$, the space of isotopies in $\Homeo(M)$,
	the distance
	\begin{equation}\label{eq:bard}
		\bar{d}(\lambda,\mu) = \max_{t\in[0,1]} d_0(\lambda(t),\mu(t)).
	\end{equation}
	The topology induced by $d_0$ (resp.\ $\bar{d}$) on $\Homeo(M)$
	(resp.\ $\mathcal{P}(\Homeo(M),\mathrm{id}_M)$) is the compact-open topology
	($C^0$-topology).\\
	
	Let $\mathcal{P}_\ast\mathbb{G}^{\Omega}(M)$ denote the set of all paths
	$H \in \mathcal{P}(\Homeo(M),\mathrm{id}_M)$ for which there exists a sequence
	$(\Phi_i)_i = (\{\phi_i^t\}_t)_i \subset \mathcal{P}\mathrm{Diff}^\infty_\Omega(M)$
	converging to $H$ in the $C^0$ metric.  The elements of
	$\mathcal{P}_\ast\mathbb{G}^{\Omega}(M)$ are called
	\emph{topological volume-preserving isotopies}.  Because
	$\mathcal{P}(\Homeo(M),\mathrm{id}_M)$ is a complete metric space, we naturally have
	the equality
	\begin{equation}\label{eq:Gev1}
		\mathbb{G}^{\Omega}(M) = \operatorname{ev}_1\!\bigl( \mathcal{P}_\ast\mathbb{G}^{\Omega}(M) \bigr).
	\end{equation}
	
	\begin{definition}
		A smooth isotopy $\Phi = \{\phi^{t}\}_t$ of $\Diff^{\infty}(M)$ is said to be
		\emph{volume-preserving} if its associated family of smooth vector fields
		$\{\dot{\phi}^{t}\}_t$ consists of divergence-free vector fields; i.e., for each
		$t$ the $(n-1)$-form $\iota_{\dot{\phi}^{t}}\Omega$ is closed.
	\end{definition}
	
	Throughout the paper, for a point $x\in M$ the orbit of $x$ under an isotopy $\Phi$
	(resp.\ $\Phi^{-1}=\{(\phi^{t})^{-1}\}_t$) is denoted by $\mathcal{O}_{x}^{\Phi}$
	(resp.\ $-\mathcal{O}_{x}^{\Phi}$, the same orbit traversed backwards), and for a
	closed form $\alpha$ we have
	$$\mathcal{F}_{\alpha}(\Phi)(1)(x) = \int_{\mathcal{O}_{x}^{\Phi}}\alpha.$$
	
	\subsection{Topological shift of closed 1-forms}
	
	Let $\alpha$ be a closed $1$-form on $M$.  For a smooth volume-preserving isotopy
	$\Phi = \{\phi^t\}_t$ the classical Moser formula gives
	\begin{equation}\label{Eq:disp}
		(\phi^t)^*\alpha - \alpha = d\bigl( \mathcal{F}_\alpha(\Phi)(t) \bigr),\qquad
		\mathcal{F}_\alpha(\Phi)(t) \coloneqq \int_0^t (\phi^s)^*(\iota_{\dot\phi^s}\alpha)\,ds .
	\end{equation}
	If $\Phi$ ends at $\phi := \phi^1$, integrating the closed $1$-form
	$(\phi)^*\alpha-\alpha$ along any piecewise smooth curve $\gamma$ from $x$ to $y$
	yields the path-independent relation
	\begin{equation}\label{Eq:intg}
		\int_\gamma \bigl( \phi^*\alpha - \alpha \bigr)
		= \mathcal{F}_\alpha(\Phi)(1)(y) - \mathcal{F}_\alpha(\Phi)(1)(x) .
	\end{equation}
	The function $\mathcal{F}_\alpha(\Phi)(1)\in C^\infty(M,\mathbb{R})$ is therefore a
	primitive of $(\phi)^*\alpha-\alpha$; it changes only by a constant when $\Phi$ is
	replaced by a homotopic isotopy with the same endpoints. When we pass to the $C^0$-closure, the pull-back $(\phi)^*\alpha$ is no longer
	defined as a smooth form, and \eqref{Eq:disp} ceases to be meaningful.
	Nevertheless, the right-hand side of \eqref{Eq:intg} involves only the values of
	$\mathcal{F}_\alpha(\Phi)(1)$ at points of $M$.  The fundamental observation of
	the $C^0$-transport theory is that this function survives the limit in a
	uniformly controlled way. Let $(\Phi_i)_i$ be a Cauchy sequence in
	$\mathcal{P}\mathrm{Diff}^\infty_\Omega(M)$ with respect to the $C^0$-metric
	$\bar d$, and set $H := \lim_{C^0}\Phi_i$.
	Lemma~\ref{lem:key_estimate} (a result of \cite{T18}) provides the key estimate
	\begin{equation}\label{eq:key}
		\sup_{z\in M} \bigl| \mathcal{F}_\alpha(\Phi_i)(1)(z) - \mathcal{F}_\alpha(\Phi_j)(1)(z) \bigr|
		\le \|\alpha\|_\infty \, \bar d(\Phi_i,\Phi_j),
	\end{equation}
	valid whenever $\bar d(\Phi_i,\Phi_j)$ is smaller than half the injectivity radius.
	Hence the sequence $\bigl(\mathcal{F}_\alpha(\Phi_i)(1)\bigr)_i$ is uniformly Cauchy,
	and the limit
	\begin{equation}\label{eq:Rdef}
		\mathcal{R}(H,\alpha)(z) \coloneqq \lim_{i\to\infty} \mathcal{F}_\alpha(\Phi_i)(1)(z)
	\end{equation}
	exists and is continuous.  Standard arguments (see Propositions~\ref{pro1}--\ref{pro2})
	show that $\mathcal{R}(H,\alpha)$ depends only on the isotopy class of $H$ (with
	fixed endpoints) and not on the particular approximating sequence. Now take $h := H(1) = \lim_{C^0}\phi_i^1$.  For any $x,y\in M$ and any piecewise smooth path $\gamma$ from $x$ to $y$, define
	\begin{equation}\label{Eq:intgg}
		I_\alpha(h,x,y) \coloneqq
		\lim_{i\to\infty} \int_\gamma \bigl( (\phi_i^1)^*\alpha - \alpha \bigr)
		= \mathcal{R}(H,\alpha)(y) - \mathcal{R}(H,\alpha)(x),
	\end{equation}
	where the equality follows by passing to the limit in \eqref{Eq:intg} and using the
	uniform convergence of $\mathcal{F}_\alpha(\Phi_i)(1)$.  The quantity
	$I_\alpha(h,x,y)$ is independent of the chosen path $\gamma$ (because the limit of
	closed forms is again exact) and depends only on $h$ and the isotopy class of $H$.
	Moreover, $I_\alpha(h,x,\cdot)$ is continuous in $y$. Finally, for a closed $1$-form $\alpha$ and a homeomorphism
	$h\in\mathbb{G}^\Omega(M)$ that can be realized as the endpoint of an isotopy in
	$\mathcal{P}_*\mathbb{G}^\Omega(M)$, we define the continuous function
	\begin{equation}\label{Eq:Funct}
		\chi(h,\alpha)(x) \coloneqq \int_M I_\alpha(h,x,y)\,\Omega(y)
		= -\Bigl( \mathcal{R}(H,\alpha)(x) - \int_M \mathcal{R}(H,\alpha)\,\Omega \Bigr),
	\end{equation}
	where $H$ is any topological isotopy ending at $h$.  The second equality follows
	from the definition of $I_\alpha$ and shows that $\chi(h,\alpha)$ has zero mean.
	In Section~4 we will prove that the map $h\mapsto \chi(h,\alpha)$ is a continuous
	$1$-cocycle on $\Homeo^\Omega_0(M)$ and that its cohomology class gives the
	desired $C^0$-analogue of Ismagilov's theorem.
	
	\subsection{Classical results used in this paper}
	
	We collect here several classical theorems that will be used throughout the paper.
	
	\begin{theorem}[Müller \cite{Mull-1}, Sikorav \cite{Sik}]\label{Mull}
		Let $M$ be a closed smooth manifold of dimension $n$, and let $\Omega$ be a volume form on $M$.
		\begin{enumerate}
			\item If $n \le 3$, then every homeomorphism of $M$ can be approximated uniformly by diffeomorphisms \cite[Section 1]{Mull-1}.
			\item If $n \ge 5$, a homeomorphism $\phi$ of $M$ can be approximated uniformly by diffeomorphisms if and only if $\phi$ is isotopic to a diffeomorphism \cite[Theorem 1]{Mull-1}.
			\item If $n \ge 5$ and $\phi$ is volume-preserving, then $\phi$ can be approximated uniformly by volume-preserving diffeomorphisms \cite{Sik} (see also \cite{Y.Oh} for an alternative proof).
			\item The case $n = 4$ remains open \cite[Section 1]{Mull-1}.
		\end{enumerate}
		Consequently, for all $n \neq 4$,
		\[
		\Homeo^\Omega_0(M) = \overline{G_\Omega(M)}^{\,C^0}.
		\]
		Furthermore, the results extend to non-compact manifolds and manifolds with boundary for compactly supported homeomorphisms: the approximation by diffeomorphisms is given by \cite[Theorem 4]{Mull-1}, and the volume-preserving version continues to hold in these settings \cite{Sik}.
	\end{theorem}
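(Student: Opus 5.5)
Most of the theorem is a compilation of cited results. The plan is to take items 1--4 directly from \cite{Mull-1} and \cite{Sik} (with \cite{Y.Oh} as an alternative route for item 3), and to spend the real work on deriving the displayed consequence
\[
\Homeo^\Omega_0(M)=\overline{G_\Omega(M)}^{\,C^0}, \qquad n\neq 4,
\]
proving the two inclusions separately. The step I expect to be hardest is making sure the smooth approximants lie in $G_\Omega(M)$, the identity component of $\Diff^\infty(M,\Omega)$, and not merely in $\Diff^\infty(M,\Omega)$.

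\emph{Inclusion $\overline{G_\Omega(M)}^{\,C^0}\subset \Homeo^\Omega_0(M)$.} Let $\phi_i\in G_\Omega(M)$ converge in $d_0$ to a homeomorphism $h$.
\begin{enumerate}
\item Since $(\phi_i)_*\Omega=\Omega$ and $\phi_i\to h$ uniformly, we get $\int f\circ\phi_i\,\Omega\to\int f\circ h\,\Omega$ for every continuous $f$. Hence $h_*\Omega=\Omega$.
\item To see that $h$ lies in the identity component, use local contractibility of $\Homeo^\Omega(M)$ (Fathi's volume-preserving version of Edwards--Kirby). This gives that $\phi_i^{-1}h$ is joined to $\mathrm{id}$ by a path in $\Homeo^\Omega(M)$ once $i$ is large.
\item Concatenating that path with the smooth isotopy from $\mathrm{id}$ to $\phi_i$ places $h$ in $\Homeo^\Omega_0(M)$.
\end{enumerate}

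\emph{Inclusion $\Homeo^\Omega_0(M)\subset\overline{G_\Omega(M)}^{\,C^0}$.} Fix $h\in\Homeo^\Omega_0(M)$ and a continuous volume-preserving path $\{h_t\}$ from $\mathrm{id}$ to $h$.
\begin{enumerate}
\item Subdivide $[0,1]$ finely enough that each increment $g_k=h_{t_k}^{-1}h_{t_{k+1}}$ is $C^0$-close to the identity.
\item Approximate each $g_k$ by a diffeomorphism: by item 1 when $n\le3$, and by item 2 when $n\ge5$, since $g_k$ is isotopic to $\mathrm{id}$.
\item Correct the approximant to be volume-preserving. For $n\ge5$ this is item 3. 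For $n\le 3$, I would use the Oxtoby--Ulam/Moser trick. The approximant pushes $\Omega$ to a smooth positive form with total mass $1$ that is weakly close to $\Omega$. Compose with the Moser diffeomorphism carrying that form back to $\Omega$, chosen $C^0$-close to $\mathrm{id}$; this is the content of \cite{Sik,Y.Oh}.
\end{enumerate}

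\emph{Main obstacle: landing in $G_\Omega(M)$.} A volume-preserving diffeomorphism that is $C^0$-close to $\mathrm{id}$ need not, a priori, be smoothly isotopic to $\mathrm{id}$ inside $\Diff^\infty(M,\Omega)$. My first approach is to approximate the whole isotopy rather than its endpoint, following Oh's proof \cite{Y.Oh}:
\begin{enumerate}
\item Apply the approximation of the previous step to every increment $g_k$, and use the local contractibility argument to approximate each $g_k$ by a diffeomorphism lying in a small contractible neighbourhood of $\mathrm{id}$.
\item Straighten each such diffeomorphism to $\mathrm{id}$ through a smooth volume-preserving isotopy, using the Moser correction uniformly in the parameter.
\item Concatenate these short smooth isotopies to obtain an element of $\mathcal{P}\mathrm{Diff}^\infty_\Omega(M)$ whose time-one map is $C^0$-close to $h$.
\end{enumerate}
If this parametric Moser step fails, I would fall back on proving the equality only for $\mathbb{G}^\Omega(M)$ and citing \cite{Sik} for the identification of $\mathbb{G}^\Omega(M)$ with $\Homeo^\Omega_0(M)$ when $n\neq4$.

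The non-compact and boundary statements are again quoted from \cite[Theorem 4]{Mull-1} and \cite{Sik}. The argument above goes through verbatim with compact supports, because the Moser correction can be made compactly supported.
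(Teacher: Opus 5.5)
Your core route to the displayed equality is the one the paper itself uses, in the remark following the theorem: M\"uller/Munkres approximation by diffeomorphisms, followed by Sikorav's volume correction. Your proof of $\overline{G_\Omega(M)}^{\,C^0}\subset\Homeo^\Omega_0(M)$ via Fathi's local contractibility is also sound; the paper leaves that inclusion implicit.

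\textbf{The gap.} It lies in how you resolve the obstacle you correctly flag, namely that the approximants must lie in $G_\Omega(M)$. Local contractibility of $\Homeo^\Omega(M)$ only gives a \emph{continuous} path from an approximant $f$ to $\mathrm{id}_M$. The parametric Moser correction only turns a \emph{smooth} isotopy in $\Diff^\infty(M)$ into one in $\Diff^\infty(M,\Omega)$. It therefore reduces the problem to showing that $f$ lies in the smooth identity component $\Diff^\infty_0(M)$, but it cannot establish this.

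For $n\ge 5$, $C^0$-smallness does not imply it. On $(S^6,\Omega)$, take a volume-preserving diffeomorphism $\sigma$ of the disc, equal to the identity near the boundary, representing a nonzero class in $\pi_0\Diff(D^6,\partial)\cong\Theta_7\cong\mathbb{Z}/28$. Implant a rescaled copy in an arbitrarily small ball of a chart in which $\Omega$ is Lebesgue measure. The result has the following properties:
\begin{itemize}
\item it is arbitrarily $C^0$-close to $\mathrm{id}$;
\item it lies in $\Homeo^\Omega_0(S^6)$, because the Alexander isotopy $x\mapsto t\,\sigma(x/t)$ has Jacobian $D\sigma(x/t)$ and so preserves volume;
\item it is not in $G_\Omega(S^6)\subset\Diff^\infty_0(S^6)$, since its class in $\pi_0\Diff^\infty(S^6)\cong\Theta_7$ is nonzero.
\end{itemize}
So your straightening step is false as stated.

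The fallback does not rescue it either. By definition $\mathbb{G}^\Omega(M)\subset\overline{G_\Omega(M)}^{\,C^0}$, so $\mathbb{G}^\Omega(M)=\Homeo^\Omega_0(M)$ is a stronger statement than the one you want. It is also not contained in Sikorav's theorem, which concerns single homeomorphisms, not isotopies. The paper obtains it only later, from its density lemma for isotopies, whose Step~2 invokes a parametric smoothing of isotopies. That parametric smoothing is exactly the missing input.

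\textbf{What would close the gap.} For $n\le 3$ the problem disappears. The map $\pi_0\Diff^\infty(M)\to\pi_0\Homeo(M)$ is injective (classical in dimensions $1,2$; Cerf and Hatcher in dimension $3$). Hence an approximant that is $C^0$-close to $h\in\Homeo^\Omega_0(M)$ is topologically, and therefore smoothly, isotopic to $\mathrm{id}_M$. Your parametric Moser step then does place it in $G_\Omega(M)$.

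For $n\ge 5$ you need a genuinely new ingredient, for instance one of the following:
\begin{itemize}
\item an approximation theorem for the whole continuous isotopy $\{h_t\}$ by smooth isotopies starting at $\mathrm{id}_M$;
\item a cancellation argument: show that the class of the approximant in $\ker\bigl(\pi_0\Diff^\infty(M)\to\pi_0\Homeo(M)\bigr)$ has volume-preserving representatives arbitrarily $C^0$-close to the identity. Composing with the inverse of such a representative then moves the approximant into $G_\Omega(M)$ at small $C^0$-cost.
\end{itemize}
The cancellation is immediate for ball-supported classes like the one above (shrink the ball), but it needs an argument for general classes in that kernel. The paper's own justification does not address this point either; it takes for granted that M\"uller plus Sikorav produce approximants in $G_\Omega(M)$.
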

	
	\begin{remark}
		The equality $\Homeo^\Omega_0(M) = \overline{G_\Omega(M)}^{\,C^0}$ holds for all $n \neq 4$: 
		\begin{itemize}
			\item For $n \le 3$, it follows from the classical fact (Munkres, recalled in Müller's paper) that every homeomorphism is approximable by diffeomorphisms, combined with Sikorav's theorem that a volume-preserving homeomorphism which is approximable by diffeomorphisms is approximable by volume-preserving diffeomorphisms.
			\item For $n \ge 5$, Müller's theorem gives approximation by diffeomorphisms for isotopic homeomorphisms; together with Sikorav's result, this yields the equality.
		\end{itemize}
		For $n=4$, the corresponding statement remains open. In this paper, when $n=4$, we work directly with the $C^0$-closure of the smooth volume-preserving diffeomorphisms, so all results remain valid without needing the equality with $\Homeo^\Omega_0(M)$.
	\end{remark}
	
	\begin{theorem}[Fathi's mass flow theorem \cite{Fathi1980}, Section 5, p. 71; discreteness of $\Gamma$ follows from the remark after Proposition 5.1, p. 72]
		Let $(M,\mu)$ be a closed oriented manifold with a good measure (e.g., the measure induced by a volume form). There exists a surjective homomorphism
		\[
		\tilde{\mathfrak{F}}: \widetilde{\Homeo}_0(M,\mu) \to H_1(M,\mathbb{R}),
		\]
		from the universal cover of the identity component of the group of measure-preserving homeomorphisms to the first homology group with real coefficients. The image of the fundamental group under $\tilde{\mathfrak{F}}$ is a discrete subgroup $\Gamma \subset H_1(M,\mathbb{R})$. 
	\end{theorem}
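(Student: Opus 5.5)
We plan to construct $\tilde{\mathfrak{F}}$ directly by pairing with integral cohomology classes represented by circle-valued maps, following Fathi's construction. Recall $H^1(M;\mathbb{Z})\cong[M,S^1]$, where $S^1=\mathbb{R}/\mathbb{Z}$. Take an element of $\widetilde{\Homeo}_0(M,\mu)$, represented by an isotopy $\{h_t\}_t$ of measure-preserving homeomorphisms with $h_0=\mathrm{id}_M$, and take a continuous $f:M\to S^1$. The map $(t,x)\mapsto f(h_t(x))-f(x)\in S^1$ equals $0$ at $t=0$. Since $[0,1]$ is simply connected, it lifts uniquely to a continuous function $\tilde f_t:M\to\mathbb{R}$ with $\tilde f_0=0$. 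We then set
$$\langle \tilde{\mathfrak{F}}(\{h_t\}),[f]\rangle := \int_M \tilde f_1\,d\mu .$$

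\textbf{Step 1: well-definedness.} First, a homotopy of $\{h_t\}$ rel endpoints changes $\tilde f_1$ continuously through lifts of the fixed map $f\circ h_1-f$. Two lifts differ by an integer-valued continuous function, and this function varies continuously along the homotopy and is $0$ at its start, so $\tilde f_1$ is unchanged. Second, replace $f$ by a homotopic map $f'$. Then $f'=f+\pi\circ g$ with $g:M\to\mathbb{R}$ continuous and $\pi:\mathbb{R}\to S^1$ the projection, so the lift changes by $g\circ h_1-g$. Its integral vanishes precisely because $h_1$ preserves $\mu$; this is the key place where measure preservation enters. Third, the pairing is additive in $[f]$, because the lift for $f_1+f_2$ is the sum of the lifts. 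It therefore defines a homomorphism $H^1(M;\mathbb{Z})\to\mathbb{R}$, and hence, after tensoring with $\mathbb{R}$, an element of $\Hom(H^1(M;\mathbb{R}),\mathbb{R})\cong H_1(M;\mathbb{R})$.

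\textbf{Step 2: homomorphism property.} Concatenate the path for $h$ with the path $\{g_t\circ h_1\}_t$. Uniqueness of lifts shows that the lift for the composite equals $\tilde f^{h}_1+\tilde f^{g}_1\circ h_1$. Integrating and using invariance of $\mu$ under $h_1$ gives additivity.

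\textbf{Step 3: discreteness of $\Gamma$.} If $\{h_t\}$ is a loop, so that $h_1=\mathrm{id}_M$, then $\tilde f_1$ is a continuous lift of $0$. Hence it is an integer-valued continuous function on the connected manifold $M$, i.e.\ a constant integer; it is the degree of $f$ along the orbit loop $t\mapsto h_t(x)$. Since $\mu(M)=1$, the pairing takes values in $\mathbb{Z}$ on integral classes. Thus $\Gamma$ lies in the image of $H_1(M;\mathbb{Z})$ in $H_1(M;\mathbb{R})$, which is a lattice, and so $\Gamma$ is discrete.

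\textbf{Step 4: surjectivity.} It suffices to use smooth flows. Let $\beta$ be a closed $(n-1)$-form and take the divergence-free vector field $X$ defined by $\iota_X\Omega=\beta$; by Poincaré duality such classes $[\beta]$ correspond to all classes in $H_1(M;\mathbb{R})$. Let $\phi^t$ be the flow of $X$ for $t\in[0,1]$. For $f$ smooth, differentiating the lift gives $\frac{d}{dt}\tilde f_t=(df)(X)\circ\phi^t$. Integrating and using invariance of $\Omega$ under the flow yields $\langle\tilde{\mathfrak{F}},[f]\rangle=\int_M df\wedge\iota_X\Omega=\int_M f^*d\theta\wedge\beta$, where $\theta$ is the angle coordinate on $S^1$. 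This is exactly the Poincaré dual pairing with $[\beta]$, so every class is attained.

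I expect the main obstacle to be Step 1: checking that the definition is independent of the representative of $[f]$ (which requires invariance of $\mu$) and that it gives a genuinely linear, real-extendable functional. The remaining steps are then formal consequences of uniqueness of lifts.
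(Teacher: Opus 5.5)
The paper gives no proof of this statement; it only cites Fathi. Your construction is exactly Fathi's definition of the mass flow: lift $f\circ h_t-f$ for $f\colon M\to S^1$ and integrate the time-one lift against $\mu$. Steps 1--3 are correct, and they use only that $\mu$ is a finite Borel measure invariant under the isotopy. They establish homotopy invariance, independence of the representative of $[f]$ via $\int_M (g\circ h_1-g)\,d\mu=0$, the cocycle identity $\tilde f^{gh}_1=\tilde f^{h}_1+\tilde f^{g}_1\circ h_1$, and integrality on loops. Integrality places $\Gamma$ inside $\mu(M)$ times the image of $H_1(M;\mathbb{Z})$, which is a lattice. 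One small correction: existence and joint continuity of the lifts come from the covering homotopy property of $\mathbb{R}\to S^1$, with parameter space $M$ (respectively $[0,1]\times M$ for the homotopy in Step 1). They do not come from simple connectivity of $[0,1]$.

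The genuine gap is in Step 4. The flow of $X$ with $\iota_X\Omega=\beta$ preserves the smooth volume, but it need not preserve an arbitrary good measure $\mu$ (non-atomic and of full support, but possibly singular with respect to Lebesgue measure). So you have proved surjectivity only when $\mu$ is induced by a volume form, whereas the statement claims it for every good measure.

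The standard fix is the Oxtoby--Ulam theorem for compact connected manifolds. It gives a homeomorphism $\psi$ with $\psi_*\nu=\mu$, where $\nu$ is a smooth volume measure of total mass $\mu(M)$. Conjugation $h\mapsto \psi h\psi^{-1}$ carries $\Homeo_0(M,\nu)$ onto $\Homeo_0(M,\mu)$. The lift for $f$ along $\{\psi h_t\psi^{-1}\}$ is the lift for $f\circ\psi$ along $\{h_t\}$, composed with $\psi^{-1}$. A change of variables in your defining integral then gives $\tilde{\mathfrak{F}}_\mu(\{\psi h_t\psi^{-1}\})=\psi_*\,\tilde{\mathfrak{F}}_\nu(\{h_t\})$. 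Since $\psi_*$ is an automorphism of $H_1(M;\mathbb{R})$, your surjectivity for $\nu$ transfers to $\mu$, and with this addition the proof is complete.
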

	
	Moreover, when $\mu$ is induced by a volume form, this subgroup $\Gamma$ is identified, via Poincaré duality, with the image of $\pi_1(G_\Omega(M))$ under the smooth flux homomorphism (see Theorem~\ref{thm:C0rigidity} below for the $C^0$-rigidity of the flux group) \cite{Thurs2}, \cite{AB1}.
	
	\begin{theorem}[Ismagilov's cohomology theorem \cite{Ism}, Theorem 2.2, p. 102]
		Let $(M,\Omega)$ be a closed oriented manifold and let $C_0(M,\mathbb{R})$ be the space of continuous functions with zero mean. Then the first continuous group cohomology of $G_\Omega(M)$ with coefficients in $C_0(M,\mathbb{R})$ is canonically isomorphic to the first de Rham cohomology group:
		\[
		H^1_{\mathrm{cont}}(G_\Omega(M), C_0(M,\mathbb{R})) \cong H^1(M,\mathbb{R}).
		\]
		In particular, the same holds with $C^\infty_0(M,\mathbb{R})$ in place of $C_0(M,\mathbb{R})$.
	\end{theorem}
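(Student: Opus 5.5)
The plan is to construct an explicit map $H^1(M;\mathbb{R})\to H^1_{\mathrm{cont}}(G_\Omega(M),C_0(M,\mathbb{R}))$ and to show that it is bijective. The surjectivity half goes through a Lie-algebra computation.

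\emph{Construction.} For a closed $1$-form $\alpha$ and $\phi\in G_\Omega(M)$, let $c_\alpha(\phi)$ be the zero-mean primitive of $\phi^*\alpha-\alpha$. Concretely,
\[
c_\alpha(\phi)=-\Bigl(\mathcal{F}_\alpha(\Phi)(1)-\int_M\mathcal{F}_\alpha(\Phi)(1)\,\Omega\Bigr)
\]
for any isotopy $\Phi$ ending at $\phi$. By \eqref{Eq:intg}, a different isotopy changes $\mathcal{F}_\alpha(\Phi)(1)$ only by a constant, which the normalization removes, so $c_\alpha(\phi)$ is well defined.

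\emph{Cocycle identity.} From $(\phi\psi)^*\alpha-\alpha=\psi^*(\phi^*\alpha-\alpha)+(\psi^*\alpha-\alpha)$ and $\psi^*\Omega=\Omega$ one gets
\[
c_\alpha(\phi\psi)=c_\alpha(\phi)\circ\psi+c_\alpha(\psi),
\]
which is the cocycle identity for the right action $f\mapsto f\circ\psi$. Continuity in the $C^\infty$ topology is clear from the explicit formula.

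\emph{Well-definedness on cohomology.} If $\alpha=du$, then $c_\alpha(\phi)=u\circ\phi-u$ is a coboundary, since the mean of $u\circ\phi$ equals the mean of $u$. So we obtain a linear map $[\alpha]\mapsto[c_\alpha]$.

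\emph{Injectivity.} Suppose $c_\alpha(\phi)=f\circ\phi-f$ for all $\phi$, with $f$ continuous. Taking $\phi=\phi^t_X$, the flow of a divergence-free field $X$, and differentiating at $t=0$ gives $\iota_X\alpha-\mathrm{mean}=Xf$, at first in the distributional sense. Divergence-free fields span $T_xM$ at every point, so all first derivatives of $f$ are continuous and $f$ is $C^1$. Then $\iota_X(\alpha-df)$ has zero mean for every divergence-free $X$. Taking $X$ supported in a chart with a prescribed direction forces $\alpha-df$ to be orthogonal to all such fields. By the Hodge decomposition, this means the harmonic part of $\alpha$ is $L^2$-orthogonal to the divergence-free fields, which contain the duals of harmonic forms. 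Hence $[\alpha]=0$.

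\emph{Surjectivity.} Let $c$ be a continuous cocycle. The plan is to first regularize it by averaging against smooth bump families on the group, replacing $c$ by a cohomologous cocycle that is smooth along one-parameter subgroups. Then define $\ell(X)=\frac{d}{dt}\big|_{0}c(\phi^t_X)$. The cocycle identity differentiates to the Lie-algebra cocycle condition
\[
\ell([X,Y])=X\ell(Y)-Y\ell(X)
\]
on the Lie algebra $\mathfrak{X}_\Omega$ of divergence-free fields, with values in functions.

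I would then compute $H^1(\mathfrak{X}_\Omega,C^\infty_0)$ in three steps:
\begin{itemize}
\item[(i)] Prove that $\ell$ is local, i.e.\ $\mathrm{supp}\,\ell(X)\subset\mathrm{supp}\,X$ after subtracting a coboundary. For this I would use fields that commute with $X$ and vanish on $\mathrm{supp}\,X$, which are plentiful.
\item[(ii)] Apply Peetre's theorem to deduce that $\ell$ is a differential operator.
\item[(iii)] Solve the cocycle equation locally on $\mathbb{R}^n$ with the standard volume, using constant fields and linear unimodular fields. This shows that $\ell(X)=Xu+\iota_X\alpha$ for a closed $1$-form $\alpha$, up to normalization.
\end{itemize}
Finally, since $G_\Omega(M)$ is connected and generated by flows, $c-c_\alpha-\delta u$ has vanishing derivative along every isotopy and is therefore zero.

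\emph{Main obstacle.} I expect steps (i)–(ii) to be the hardest: the regularization of a merely continuous cocycle on a non-locally-compact Fréchet group, together with establishing locality. There is no Haar measure to average against. For the regularization I would first try averaging along finite-dimensional families of flows of a fixed finite set of divergence-free fields. The cohomology computation in (iii) is then a routine Lie-algebra calculation.
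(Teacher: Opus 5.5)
The paper gives no proof of this statement. It quotes it from Ismagilov as a black box, and the regularization step you flag is likewise settled only by citation in the surjectivity part of the proof of Theorem~\ref{thm:Ismagilov}. So your proposal has to stand on its own. Its architecture (differentiate, compute $H^1$ of the Lie algebra, integrate back) is the standard one for smooth coefficients. However, the surjectivity half has a genuine gap: the regularization and steps (i)--(ii) are the actual content of the theorem, and you leave them as intentions.

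\textbf{Regularization.} The standard averaging trick $c\mapsto c-\delta b$, $b=\int\rho(g)\,c(g)\,dg$, works only because an invariant measure with smooth density lets you transfer translations onto $\rho$. Averaging over the flow of one field $X$ does make $c-\delta b_X$ differentiable along $t\mapsto\phi_X^t$. But $b_X$ depends on $X$, so you never get a single coboundary that works for all one-parameter subgroups. Flows of finitely many non-commuting fields generate an infinite-dimensional group carrying no such measure. Moreover, even after smoothing, $\ell(X)=\tfrac{d}{dt}c(\phi_X^t)|_{t=0}$ is only an element of $C_0(M,\mathbb{R})$, because orbit maps $t\mapsto f\circ\phi_X^t$ of a merely continuous $f$ need not be differentiable. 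Hence $\ell([X,Y])=X\ell(Y)-Y\ell(X)$ holds at best distributionally, and Peetre's theorem is not yet available.

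\textbf{Step (i) is false as stated.} If $Y$ is divergence-free and supported in $U=M\setminus\mathrm{supp}\,X$, then $[X,Y]=0$ and $X\ell(Y)=0$ on $U$. So you only get $Y\ell(X)=0$ on $U$: $\ell(X)$ is locally \emph{constant}, not zero, off $\mathrm{supp}\,X$. For the very cocycles you want to recover, $\ell_\alpha(X)=\iota_X\alpha-\int_M\iota_X\alpha\,\Omega$, this constant is nonzero. Take $X=\rho(y)\partial_x$ on $\mathbb{T}^2$, $\alpha=dx$, with $\int\rho\neq0$ and $\mathrm{supp}\,\rho$ a proper subinterval. Lie-algebra coboundaries $X\mapsto Xu$ vanish off $\mathrm{supp}\,X$, so no coboundary removes it. You must work modulo constants, or with fields supported in balls (whose $\iota_X\Omega$ is exact), before invoking Peetre.

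\textbf{Generation by flows.} The claim that $G_\Omega(M)$ is ``generated by flows'' is not formal, since the group is not locally exponential. It needs Thurston's and Banyaga's simplicity theorems.

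\textbf{A way around regularization.} You can avoid analytic regularization entirely by evaluating at a base point. On the stabilizer $G_{x_0}$, the map $\psi\mapsto c(\psi)(x_0)$ is a continuous homomorphism to $\mathbb{R}$. When it vanishes, $f(\sigma(x_0)):=c(\sigma)(x_0)$ is well defined and continuous (via continuous local sections $y\mapsto\sigma_y$), and it satisfies $c(\phi)=f\circ\phi-f$. This reduces surjectivity to identifying continuous homomorphisms $G_{x_0}\to\mathbb{R}$, which is a perfectness/fragmentation question rather than a smoothing problem.

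\textbf{Injectivity.} Your argument reaches the right conclusion, but not by the reasoning written. The differentiated identity says $\iota_X(\alpha-df)$ equals the constant $\int_M\iota_X\alpha\,\Omega$; it does not say that it has zero mean. Also, $L^2$-orthogonality to compactly supported divergence-free fields carries no information: $\int_M(\iota_X\beta)\,\Omega=\int_M\beta\wedge\iota_X\Omega=0$ for every closed $\beta$ once $\iota_X\Omega$ is exact. The correct finish is pointwise. For $X$ supported in a ball the constant must vanish, since $X=0$ somewhere, so $\iota_X(\alpha-df)\equiv0$. Such fields span every $T_xM$, hence $\alpha=df$.

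\textbf{Dimension.} This argument needs $n\ge2$, and the restriction is genuine. For $n=1$, $G_\Omega(S^1)$ is the compact rotation group, so Haar averaging kills $H^1_{\mathrm{cont}}(G_\Omega(S^1),C_0(S^1,\mathbb{R}))$; indeed $c_{d\theta}\equiv0$. Yet $H^1(S^1,\mathbb{R})=\mathbb{R}$. The statement as written should assume $\dim M\ge2$.
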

	
	The following technical lemma is essential for the $C^0$-continuity of our topological flux.
	
	\begin{lemma}[Tchuiaga's approximation lemma \cite{T18}]\label{lem:key_estimate} Let $(M,g)$ be a closed oriented Riemannian manifold. Let $\alpha \in \mathcal{Z}^1(M)$ and let $\Phi,\Psi$ be two isotopies with $\bar{d}(\Phi,\Psi) \le r(g)/2$, where $r(g)$ is the injectivity radius of $(M,g)$. Then
		\[
		\sup_{z\in M} \bigl| \mathcal{F}_\alpha(\Phi)(1)(z) - \mathcal{F}_\alpha(\Psi)(1)(z) \bigr| \le \|\alpha\|_\infty \, \bar{d}(\Phi,\Psi),
		\]
		where $\|\alpha\|_\infty = \sup_{x\in M} \|\alpha_x\|_g$ is the uniform norm of $\alpha$ with respect to the Riemannian metric $g$.
	\end{lemma}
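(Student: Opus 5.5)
The plan is to compare the two flux functions pointwise through the orbit-integral representation
\[
\mathcal{F}_\alpha(\Phi)(1)(z)=\int_{\mathcal{O}_z^{\Phi}}\alpha ,
\]
and to bound the difference by integrating $\alpha$ over a thin closed loop, or rather over a thin ``strip'', that joins the two orbits. I take $\Phi,\Psi$ to be smooth (volume-preserving) isotopies, since that is the setting in which $\mathcal{F}_\alpha$ is defined.

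First, I check the orbit formula from \eqref{Eq:disp}. The integrand is $(\phi^s)^*(\iota_{\dot\phi^s}\alpha)(z)=\alpha_{\phi^s(z)}(\dot\phi^s(\phi^s(z)))$, so integrating in $s$ gives exactly the integral of $\alpha$ along the curve $s\mapsto\phi^s(z)$. Thus
\[
\mathcal{F}_\alpha(\Phi)(1)(z)-\mathcal{F}_\alpha(\Psi)(1)(z)=\int_{\mathcal{O}_z^{\Phi}}\alpha-\int_{\mathcal{O}_z^{\Psi}}\alpha .
\]

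Second, I build the comparison homotopy. For each $s\in[0,1]$ we have $d_g(\phi^s(z),\psi^s(z))\le\bar d(\Phi,\Psi)\le r(g)/2$, which is below the injectivity radius. Hence there is a unique minimizing geodesic $\sigma_s$ from $\psi^s(z)$ to $\phi^s(z)$, and it depends continuously (indeed smoothly) on $s$, because the exponential map is a diffeomorphism on balls of radius less than $r(g)$. At $s=0$ both points equal $z$, so $\sigma_0$ is constant. Concatenate $\mathcal{O}^{\Psi}_z$, then $\sigma_1$, then $-\mathcal{O}^{\Phi}_z$. The map $(s,u)\mapsto\sigma_s(u)$ gives a homotopy from this loop to the constant loop at $z$: the strip swept by the geodesics is a singular $2$-chain whose boundary is exactly this loop. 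Since $\alpha$ is closed, Stokes' theorem (or homotopy invariance of integrals of closed forms) gives
\[
\int_{\mathcal{O}_z^{\Phi}}\alpha-\int_{\mathcal{O}_z^{\Psi}}\alpha=\int_{\sigma_1}\alpha .
\]

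Third, I estimate. We have $|\int_{\sigma_1}\alpha|\le\|\alpha\|_\infty\,\mathrm{length}(\sigma_1)$, and $\mathrm{length}(\sigma_1)=d_g(\phi^1(z),\psi^1(z))\le\bar d(\Phi,\Psi)$. Taking the supremum over $z$ gives the lemma.

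The main obstacle is the second step: making the geodesic homotopy legitimate. This requires continuity in $s$ of $\sigma_s$, which is where the hypothesis $\bar d\le r(g)/2$ is used. It also requires applying Stokes' theorem to a merely continuous strip. I would handle the latter either by smoothing the strip, or by subdividing $[0,1]$ so that each small rectangle lies in a geodesically convex ball, where $\alpha=df$ locally and the integrals telescope.

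Notably, the bound only involves the endpoint geodesic. The intermediate times matter only for the existence of the homotopy, which explains why the hypothesis is on $\bar d$ while the estimate effectively uses only $d_0(\phi^1,\psi^1)$.
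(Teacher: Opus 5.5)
Your argument is correct: the orbit formula for $\mathcal{F}_\alpha(\Phi)(1)(z)$ and the strip of minimizing geodesics $\sigma_s$ from $\psi^s(z)$ to $\phi^s(z)$ (well defined and smooth in $s$ because the distances stay below $r(g)$) reduce the difference, via Stokes' theorem for the closed form $\alpha$, to $\int_{\sigma_1}\alpha$; the bound $\bigl|\int_{\sigma_1}\alpha\bigr|\le\|\alpha\|_\infty\, d_g(\phi^1(z),\psi^1(z))$ then gives the lemma. The paper does not reprove this lemma (it is quoted from \cite{T18}), but it uses exactly the same geodesic-cylinder, Stokes and length-bound argument in the proof of Lemma~\ref{lem:homotopy_invariance}, so your route is essentially the paper's.
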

	
	\begin{remark}[Local estimate]
		In any coordinate chart $(U,\varphi)$ where $\alpha=\sum_i\alpha_i\,dx_i$,
		the uniform norm $\|\alpha\|_\infty$ used in Lemma~\ref{lem:key_estimate} is
		bounded above by $\max_i\sup_U|\alpha_i|\cdot\|\partial_{x_i}\|_g$.  Hence the
		estimate remains effective when one works entirely in local coordinates, which
		is often useful for explicit computations.
	\end{remark}
	
	\begin{lemma}[Density of smooth volume-preserving isotopies]
		\label{lem:density_smooth_isotopies}
		Let $M$ be a closed oriented manifold of dimension $n \neq 4$, and let $\Omega$ be a volume form on $M$.
		Then the space $\mathcal{P}\mathrm{Diff}^\infty_\Omega(M)$ of smooth volume-preserving isotopies is dense in the space 
		$\mathcal{P}\Homeo^\Omega_0(M)$ of continuous volume-preserving isotopies with respect to the uniform $C^0$-topology.
	\end{lemma}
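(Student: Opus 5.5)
We approximate a continuous volume-preserving isotopy $H=\{h^t\}_t\in\mathcal{P}\Homeo^\Omega_0(M)$ piecewise in time, using two inputs. The first is the pointwise density of Theorem~\ref{Mull}: every element of $\Homeo^\Omega_0(M)$ is a $C^0$-limit of elements of $G_\Omega(M)$ when $n\neq 4$. The second is the local contractibility of $\Homeo^\Omega_0(M)$, and of $G_\Omega(M)$, near the identity, which lets us join close maps by short isotopies.

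Fix $\varepsilon>0$. Since $[0,1]$ is compact, $t\mapsto h^t$ is uniformly $d_0$-continuous, so we can choose a partition $0=t_0<t_1<\dots<t_N=1$ such that $d_0(h^s,h^{t})<\varepsilon$ whenever $s,t$ lie in a common interval $[t_{k-1},t_k]$. For each $k$ we use Theorem~\ref{Mull} to pick $\psi_k\in G_\Omega(M)$ with $d_0(\psi_k,h^{t_k})<\delta$, where $\delta\ll\varepsilon$ is chosen later, and we set $\psi_0=\mathrm{id}$. Then $\psi_{k-1}^{-1}\psi_k$ is a volume-preserving diffeomorphism that is $C^0$-close to the identity, with distance controlled by $\varepsilon+2\delta$ through the uniform continuity of composition and inversion on compact $M$.

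We then connect consecutive nodes. For each $k$ we want a smooth volume-preserving isotopy $\{\theta_k^s\}_{s\in[0,1]}$ from $\mathrm{id}$ to $\psi_{k-1}^{-1}\psi_k$ that stays uniformly $C^0$-small, say $\bar d(\{\theta_k^s\}_s,\mathrm{id})\le C(\varepsilon+2\delta)$. Given this, we define $\Phi$ on $[t_{k-1},t_k]$ by $\phi^t=\psi_{k-1}\circ\theta_k^{\rho_k(t)}$, where $\rho_k$ is a smooth reparametrization flat at the endpoints, so that the concatenation is smooth in $t$. The resulting $\Phi$ lies in $\mathcal{P}\mathrm{Diff}^\infty_\Omega(M)$, and for $t\in[t_{k-1},t_k]$ the triangle inequality gives $d_0(\phi^t,h^t)\le d_0(\phi^t,\psi_{k-1})+d_0(\psi_{k-1},h^{t_{k-1}})+d_0(h^{t_{k-1}},h^t)=O(\varepsilon)$. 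Hence $\bar d(\Phi,H)=O(\varepsilon)$.

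The main obstacle is the small-isotopy step: a volume-preserving diffeomorphism that is merely $C^0$-close to the identity need not be $C^1$-close, so the Weinstein/Moser chart argument for smooth local contractibility does not apply directly. The first approach is to use the local contractibility of $\Homeo^\Omega_0(M)$ (Fathi) to get a short continuous volume-preserving isotopy from $\mathrm{id}$ to $\psi_{k-1}^{-1}\psi_k$. We would then smooth it at the level of diffeomorphisms through a $C^0$-small deformation lemma: $C^0$-small isotopies in $\Diff^\infty(M)$ between diffeomorphisms are deformed, with endpoints fixed, into volume-preserving ones by Moser's method applied to the pushed-forward volume forms. Keeping this Moser correction $C^0$-small is where care is needed; it requires the volume forms to stay close, which a preliminary Müller–Sikorav approximation at finitely many intermediate times can ensure. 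If that fails, the fallback is to refine the partition until the nodes are close enough that a fragmentation into balls, plus the known local contractibility of volume-preserving diffeomorphism groups of a ball in the $C^0$ sense (Alexander-trick type arguments), provides the required small smooth isotopies.
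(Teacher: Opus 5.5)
\textbf{There is a genuine gap at the step you flag yourself.} Your outer scheme is fine: a time partition, nodes $\psi_k\in G_\Omega(M)$ close to $h^{t_k}$, and flat reparametrizations. But the proof rests entirely on joining $\mathrm{id}$ to the $C^0$-small element $g=\psi_{k-1}^{-1}\psi_k$ by a $C^0$-small \emph{smooth} volume-preserving isotopy. Combined with the M\"uller--Sikorav density, this step already implies the lemma, so it cannot be treated as a routine ingredient, and neither of your sketches proves it.

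Your first route never uses $g\in G_\Omega(M)$. In that generality, the step it relies on (replacing a short continuous isotopy between two diffeomorphisms by a smooth one with the same endpoints) is false. Since $\pi_0\Diff_\partial(D^6)\cong\Theta_7\cong\mathbb{Z}/28$, a relative Moser argument gives an exotic compactly supported volume-preserving diffeomorphism of a ball. Conjugating it by homotheties inside a volume chart of $S^6$ produces $\eta$ supported in an arbitrarily small ball. The Alexander isotopy $x\mapsto t\,\eta(x/t)$ is then a $C^0$-small continuous volume-preserving isotopy from $\mathrm{id}$ to $\eta$. Yet no smooth isotopy whatsoever joins them, because extension by the identity induces an isomorphism $\pi_0\Diff_\partial(D^6)\cong\pi_0\Diff^+(S^6)$.

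The same example disposes of your fallback. The Alexander trick has no smooth counterpart: arbitrarily $C^0$-small compactly supported volume-preserving diffeomorphisms of a ball lie in non-trivial components. Refining the partition does not help, because the phenomenon occurs at every scale. Fragmenting a diffeomorphism into ball-supported pieces needs $C^1$-control or an isotopy, not mere $C^0$-smallness.

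Your diagnosis of the Moser step is also off. Closeness of $(f^t)^*\Omega$ to $\Omega$ is a first-order condition that no $C^0$-approximation supplies. Prescribing finitely many times says nothing about the times in between. And such closeness is not needed anyway: what has to be controlled is how far volume must be transported.

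\textbf{What the paper does instead.} It never connects independently chosen nodes. It takes one smooth, not yet volume-preserving, approximation $F=\{f^t\}$ of the whole isotopy (Step~2). There only $f^0=\mathrm{id}$ is prescribed, so no endpoint-fixed smoothing problem arises. It then corrects the volume \emph{locally}, in two stages on a triangulation whose simplices have small $h^t$-images. First, it pushes volume across faces along a spanning tree of the dual graph, so that $\operatorname{vol}(\tilde f^t(\sigma))=\operatorname{vol}(\sigma)$ for every top-dimensional simplex $\sigma$. Second, it takes a primitive $\beta_t$ of $\Omega-(\tilde f^t)^*\Omega$ vanishing on the $(n-1)$-skeleton, so that the Moser flow preserves every simplex and displaces points by at most the mesh. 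This cellwise confinement of the Moser correction is the idea missing from your proposal. It makes the volume correction $C^0$-small without any first-order control on the approximation, and it never requires joining two given diffeomorphisms by a small smooth isotopy.
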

	
	\begin{proof}
		Fix a continuous volume-preserving isotopy $H = \{h^t\}_{t\in[0,1]}\in\mathcal{P}\Homeo^\Omega_0(M)$ and a number $\varepsilon>0$.
		We construct a smooth volume-preserving isotopy $\Phi = \{\phi^t\}_{t\in[0,1]}$ with $\max_t d_0(\phi^t,h^t) < \varepsilon$.
		
		\medskip\noindent\textbf{Step 1: Choosing the triangulation.}
		Since $[0,1]\times M$ is compact, the family $\{h^t\}$ is uniformly equicontinuous.
		Pick a smooth triangulation $K$ of $M$ so fine that
		\begin{equation}\label{eq:diam}
			\operatorname{diam}\bigl(h^t(\sigma)\bigr) < \frac{\varepsilon}{6}
			\qquad\text{for all } t\in[0,1],\; \sigma\in K^{(n)},
		\end{equation}
		where $K^{(n)}$ is the $n$-skeleton of the triangulation. Such a triangulation exists because the mesh can be taken smaller than the Lebesgue number of the covering by $h^t$-images of small metric balls.
		
		\medskip\noindent\textbf{Step 2: Smooth approximation.}
		By classical parametric smoothing theorems (Munkres, M\"uller~\cite{Mull-1}), valid for $n\neq 4$, we can approximate $H$ in the $C^0$-topology by a smooth isotopy $F = \{f^t\}_{t\in[0,1]}\in\mathcal{P}\mathrm{Diff}^\infty(M)$.
		Choose this approximation so that
		\[
		\max_t d_0(f^t, h^t) < \delta,
		\qquad
		\delta \le \frac{\varepsilon}{6}.
		\]
		Then for every $n$-simplex $\sigma$,
		\[
		\operatorname{diam}\bigl(f^t(\sigma)\bigr)
		\le \operatorname{diam}\bigl(h^t(\sigma)\bigr) + 2\delta
		< \frac{\varepsilon}{3}.
		\]
		Moreover, because $h^t$ preserves $\Omega$, the volume of $f^t(\sigma)$ differs from $\operatorname{vol}(\sigma)$ only by the volume of a $\delta$-neighbourhood of $\partial h^t(\sigma)$, giving
		\[
		\bigl|\operatorname{vol}(f^t(\sigma)) - \operatorname{vol}(\sigma)\bigr| \le C(K)\,\delta,
		\]
		where $C(K)$ depends only on the areas of the faces of $K$.  By making $\delta$ even smaller if necessary, we can ensure that these volume discrepancies are small enough for the next step.
		
		\medskip\noindent\textbf{Step 3: Volume-matching (Giroux’s trick).}
		For each $t$ define $\Delta_\sigma(t) = \operatorname{vol}(f^t(\sigma)) - \operatorname{vol}(\sigma)$.  These satisfy $\sum_\sigma \Delta_\sigma(t)=0$ and vary smoothly with $t$.
		Since the $\Delta_\sigma(t)$ are very small, we can correct them by a time-dependent vector field $Y_t$ supported near the dual $1$-skeleton of $K$.
		Choose a maximal tree in the dual graph; along each edge we push exactly the required amount of volume from one simplex to its neighbour by a compactly supported vector field normal to their common face.
		The magnitude of $Y_t$ is proportional to the $ \Delta_\sigma(t)$  divided by the face area and the collar width, hence $O(\delta)$.
		The time-one map $\xi_t$ of the flow of $Y_t$ therefore moves points by less than $\varepsilon/3$, and the whole construction depends smoothly on $t$.
		Define $\tilde f^t = \xi_t \circ f^t$.  Then $\tilde F = \{\tilde f^t\}$ is a smooth isotopy satisfying
		\[
		\operatorname{vol}\bigl(\tilde f^t(\sigma)\bigr) = \operatorname{vol}(\sigma)
		\quad \forall\sigma\in K^{(n)},\qquad
		\max_t d_0(\tilde f^t, f^t) < \frac{\varepsilon}{3}.
		\]
		
		\medskip\noindent\textbf{Step 4: A primitive vanishing on the skeleton.}
		Set $\Omega_t = (\tilde f^t)^*\Omega$ and $\alpha_t = \Omega - \Omega_t$.
		Both forms have the same total volume, so $\alpha_t$ is exact.  By construction,
		$\int_\sigma \alpha_t = 0$ for every $n$-simplex $\sigma$.	We now invoke a parametric version of the Singer--Thorpe lemma (see e.g., Whitney’s geometric integration theory~\cite{Whit} Chapters IV, V  or Banyaga~\cite{AB1}):
		there exists a smooth family of $(n-1)$-forms $\beta_t$ such that
		\begin{itemize}
			\item $d\beta_t = \alpha_t$ globally,
			\item $\beta_t$ vanishes identically on the $(n-1)$-skeleton $K^{(n-1)}$.
		\end{itemize}
		In brief, one first constructs local primitives on each simplex with zero integrals on faces, glues them to a global $\beta_{1,t}$ whose periods on the $(n-1)$-skeleton vanish, and then subtracts an exact form $d\gamma_t$ to make $\beta_{1,t}$ vanish pointwise on the skeleton without changing $d\beta_{1,t}$.
		All steps are linear in the forms and hence smooth in $t$.
		
		\medskip\noindent\textbf{Step 5: Moser flow trapped inside simplices.}
		Consider the two-parameter family of volume forms
		\[
		\omega_{s,t} = \Omega + s\,d\beta_t = \Omega + s\,\alpha_t, \qquad s\in[0,1].
		\]
		Each $\omega_{s,t}$ is non-degenerate.  Define the time-dependent vector field $X_{s,t}$ by
		\[ \iota_{X_{s,t}}\omega_{s,t} = -\beta_t. \]
		Because $\beta_t = 0$ on $K^{(n-1)}$, the vector field $X_{s,t}$ vanishes identically on the $(n-1)$-skeleton.
		Let $\psi_{s,t}$ be the flow of $X_{s,t}$ in the $s$-direction, with $\psi_{0,t} = \mathrm{id}_M$.
		Since $X_{s,t}$ vanishes on the boundary of every $n$-simplex $\sigma$, the flow preserves each closed simplex; thus $
		\psi_{s,t}(\sigma) \subseteq \sigma.$ 
		In particular, the time-one map $\psi^t := \psi_{1,t}$ satisfies
		\[
		d_0(\psi^t, \mathrm{id}_M) \le \max_{\sigma} \operatorname{diam}(\sigma) < \frac{\varepsilon}{3}.
		\]
		The standard Moser calculation gives $\frac{d}{ds}\bigl((\psi_{s,t})^*\omega_{s,t}\bigr) = 0$, so
		$(\psi^t)^*\Omega_t = (\psi^t)^*\omega_{1,t} = \omega_{0,t} = \Omega$.
		
		\medskip\noindent\textbf{Step 6: Composition and estimate.}
		Define $\phi^t = \tilde f^t \circ \psi^t$.  Then
		\[
		(\phi^t)^*\Omega = (\psi^t)^*(\tilde f^t)^*\Omega = (\psi^t)^*\Omega_t = \Omega ,
		\]
		so $\Phi = \{\phi^t\}$ is a smooth volume-preserving isotopy.
		Finally,
		\begin{align*}
			d_0(\phi^t, h^t)
			&\le d_0(\tilde f^t\circ\psi^t, \tilde f^t) + d_0(\tilde f^t, f^t) + d_0(f^t, h^t) \\
			&< \frac{\varepsilon}{3} + \frac{\varepsilon}{3} + \frac{\varepsilon}{3} = \varepsilon .
		\end{align*}
		The first term is bounded by $\operatorname{diam}(\tilde f^t(\sigma))$ for $x\in\sigma$, which we already controlled in Step~2.
		Since $\varepsilon$ was arbitrary, $\mathcal{P}\mathrm{Diff}^\infty_\Omega(M)$ is dense in $\mathcal{P}\Homeo^\Omega_0(M)$.
	\end{proof}
	
	\begin{lemma}\label{lem:spanning}
		Let \((M,\Omega)\) be a closed oriented manifold of dimension $n \ge 2$ equipped with a volume form. For every \(x\in M\) and every tangent vector \(v\in T_xM\), there exists a smooth divergence-free vector field \(X\in\mathfrak{X}_{\text{vol}}(M)\) such that \(X(x)=v\). In particular, the evaluations \(\{X(x) \mid X\in\mathfrak{X}_{\text{vol}}(M)\}\) span \(T_xM\). (For $n=1$, the result holds trivially as $M \cong S^1$).
	\end{lemma}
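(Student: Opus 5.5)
The plan is a local construction followed by a multiplication by a bump function, carried out in a way that keeps the field divergence-free. Fix $x\in M$ and $v\in T_xM$ with $v\neq 0$; the case $v=0$ is handled by $X=0$. Choose a chart $\varphi:U\to\mathbb{R}^n$ centered at $x$. By Moser's lemma (or by the elementary Darboux-type normal form for volume forms), we may assume $\varphi_*\Omega = dx_1\wedge\cdots\wedge dx_n$ on a ball $B_{2r}(0)$. In these coordinates a vector field $Y$ is divergence-free exactly when the $(n-1)$-form $\iota_Y\Omega$ is closed. Equivalently, since $n\ge 2$, it suffices to take $\iota_Y\Omega=d\eta$ for a compactly supported $(n-2)$-form $\eta$. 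Exact forms are automatically closed, so this formulation avoids the main difficulty: naively cutting off a constant field $v$ by a bump function $\rho$ destroys the divergence-free condition, since $\operatorname{div}(\rho v)=d\rho(v)$.

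For the construction, after a linear volume-preserving change of coordinates (an element of $SL_n(\mathbb{R})$, rescaled if needed), we may assume $v=c\,\partial_{x_1}$ with $c\neq 0$. The constant field $c\,\partial_{x_1}$ satisfies
\[
\iota_{c\partial_{x_1}}(dx_1\wedge\cdots\wedge dx_n)=c\,dx_2\wedge\cdots\wedge dx_n = d\bigl(c\,x_2\,dx_3\wedge\cdots\wedge dx_n\bigr),
\]
and for $n=2$ this reads $c\,dx_2=d(c\,x_2)$. Let $\rho$ be a smooth bump function with $\rho\equiv1$ on $B_r(0)$ and support in $B_{2r}(0)$. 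Set $\eta=\rho\, c\,x_2\,dx_3\wedge\cdots\wedge dx_n$; this is a compactly supported form, which transplants to $M$ and extends by zero. Then define $X$ by $\iota_X\Omega=d\eta$, which is possible because contraction with $\Omega$ is an isomorphism from vector fields onto $(n-1)$-forms.

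It remains to check two things. First, $d(\iota_X\Omega)=dd\eta=0$, so $\mathcal{L}_X\Omega=0$ and $X\in\mathfrak{X}_{\mathrm{vol}}(M)$. Second, on $B_r(0)$ we have $d\eta = c\,dx_2\wedge\cdots\wedge dx_n$, hence $X=c\,\partial_{x_1}$ there, and after undoing the linear change of coordinates $X(x)=v$.

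The spanning statement then follows immediately, since every $v\in T_xM$ is attained. For $n=1$, $M\cong S^1$ and $\Omega=f\,d\theta$ with $f>0$. The divergence-free fields are exactly $X=k/f\,\partial_\theta$ with $k$ constant, so any $v\in T_xM$ is attained by choosing $k$ appropriately.

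The only delicate point is the normalization of $\Omega$ to the standard form near $x$. I would invoke Moser's trick locally on a small ball; alternatively, one can avoid it by working with $\Omega=f\,dx_1\wedge\cdots\wedge dx_n$ directly and defining $X$ through $\iota_X\Omega=d\eta$ with $\eta$ chosen so that $(d\eta)_x=\iota_v\Omega_x$. This is possible because, at a single point, any constant-coefficient $(n-1)$-form is $d$ of a linear-coefficient $(n-2)$-form.
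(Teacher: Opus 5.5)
Your proof is correct and follows essentially the same route as the paper. Both normalize $\Omega$ near $x$, write the constant-coefficient form $\iota_v\Omega$ locally as $d\eta$, cut off $\eta$ by a bump function $\rho$, and define $X$ by $\iota_X\Omega = d(\rho\eta)$, so that divergence-freeness comes from $d^2=0$. The differences are minor: you write the primitive $c\,x_2\,dx_3\wedge\cdots\wedge dx_n$ explicitly where the paper invokes the Poincar\'e lemma, and your closing remark correctly observes that the Darboux-type normalization of $\Omega$ is not actually needed.
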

	
	\begin{proof}
		Contraction with \(\Omega\) gives a vector-bundle isomorphism
		\[
		\iota_\Omega : TM \longrightarrow {\bigwedge}^{n-1}\,T^*M,\qquad
		v \longmapsto \iota_v\Omega .
		\]
		Fix a local coordinate chart $B \subset M$ around $x$ given by Darboux-type volume-preserving coordinates, such that $\Omega = dx^1 \wedge \dots \wedge dx^n$ on $B$. We may assume $B$ is a contractible ball. Extend the vector $v \in T_x M$ to a vector field $Y$ that has constant coefficients in this chart. Because $Y$ is constant, it is divergence-free on $B$. Define the $(n-1)$-form $\beta = \iota_Y\Omega$ on $B$. Because $Y$ is divergence-free, Cartan's magic formula yields $d\beta = d(\iota_Y\Omega) = \mathcal{L}_Y\Omega = 0$. Thus, $\beta$ is a closed form on the contractible open set $B$. By the Poincaré lemma, $\beta$ is exact on $B$; hence, there exists an $(n-2)$-form $\eta$ on $B$ such that $d\eta = \beta$. Let $\rho: M \to \mathbb{R}$ be a smooth bump function supported entirely in $B$ such that $\rho \equiv 1$ in a neighborhood of $x$. We define a global smooth $(n-2)$-form $\tilde\eta$ on $M$ by extending $\rho\eta$ by zero outside $B$. Now define a global vector field \(X\) uniquely via the relation 
		\(\iota_X\Omega = d\tilde\eta\). Because \(d\tilde\eta\) is exact, it is 
		closed, yielding
		\[
		\mathcal{L}_X\Omega = d(\iota_X\Omega) = d(d\tilde\eta) = 0.
		\]
		Thus, \(X\) is globally divergence-free. Near the point \(x\), $\rho \equiv 1$, which implies $$d\tilde\eta|_x = d\eta|_x = \beta|_x = \iota_Y\Omega|_x = \iota_v\Omega|_x.$$ By the injectivity of \(\iota_\Omega\) on the fiber over \(x\), we conclude \(X(x)=v\).
	\end{proof}
	
	\subsection{Concatenation of Isotopies}
	Let \(u:[0,1]\to[0,1]\) be a smooth, increasing function with \(u(t)=0\) for \(t\in[0,\delta]\) and \(u(t)=1\) for \(t\in[1-\delta,1]\) (\(\delta\in(0,1/2)\)). Define \(\lambda(t)=u(2t)\) for \(t\in[0,1/2]\) and \(\tau(t)=u(2t-1)\) for \(t\in[1/2,1]\). For two isotopies \(\Phi=\{\phi_t\},\Psi=\{\psi_t\}\), the {\bf left concatenation} is:
	\[
	(\Psi\ast_l\Phi)_t=
	\begin{cases}
		\phi_{\lambda(t)}, & 0\le t\le 1/2,\\
		\psi_{\tau(t)}\circ\phi_1, & 1/2\le t\le 1.
	\end{cases}
	\]
	
	Both \(\Psi\ast_l\Phi\) and the pointwise composition \(\Psi\circ\Phi\) (where \((\Psi\circ\Phi)_t=\psi_t\circ\phi_t\)) are smooth isotopies from the identity map to \(\psi_1\circ\phi_1\). Geometrically, the left concatenation \(\Psi \ast_l \Phi\) traces the orbit of \(x\) under \(\Phi\) up to time \(1\), then continues along the orbit of the intermediate point \(\phi_1(x)\) under \(\Psi\). That is,
	\[
	\mathcal{O}_x^{\Psi \ast_l \Phi} = \mathcal{O}_x^{\Phi} \cup \mathcal{O}_{\phi_1(x)}^{\Psi},
	\] 
	where the two pieces are concatenated at the point \(\phi_1(x)\). More precisely, since \(\lambda:[0,1/2]\to[0,1]\) and \(\tau:[1/2,1]\to[0,1]\) are reparameterizations of the intervals \([0,1/2]\) and \([1/2,1]\) respectively, the parameterized orbit is:
	\[
	c_x^{\Psi \ast_l \Phi}(s) =
	\begin{cases}
		\phi_s(x), & 0 \le s \le 1 \quad (s = \lambda(t)),\\
		\psi_s(\phi_1(x)), & 0 \le s \le 1 \quad (s = \tau(t)).
	\end{cases}
	\]
	
	Interpreting \(\mathcal{F}_{\alpha}(\Phi)(1)(x)\) as 
	\[
	\mathcal{F}_{\alpha}(\Phi)(1)(x) := \int_{0}^{1}(\phi^{s})^{*}(\iota_{\dot \phi^{s}}\alpha)\,ds(x) = \int_{\mathcal{O}_x^{\Phi}}\alpha,
	\]
	it follows immediately from the above description of the orbit \(\mathcal{O}_x^{\Psi \ast_l \Phi}\) that 
	\[
	\mathcal{F}_{\alpha}(\Psi \ast_l \Phi)(1)(x) = \mathcal{F}_{\alpha}(\Phi)(1)(x) + \mathcal{F}_{\alpha}(\Psi)(1)(\phi_1(x)).
	\]
	That is, 
	\[
	\mathcal{F}_{\alpha}(\Psi \ast_l \Phi)(1) = \mathcal{F}_{\alpha}(\Phi)(1) + \mathcal{F}_{\alpha}(\Psi)(1)\circ\phi_1.
	\]
	
	\begin{proposition}\label{Conca-1}
		The paths \(\Psi\ast_l\Phi\) and \(\Psi\circ\Phi\) are homotopic relative to their endpoints.
	\end{proposition}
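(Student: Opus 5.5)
We will write down an explicit two-parameter family interpolating the two paths, using the cutoff functions $\lambda,\tau$ from the definition of left concatenation. Extend both to all of $[0,1]$ by setting $\lambda(t)=1$ for $t\ge 1/2$ and $\tau(t)=0$ for $t\le 1/2$. With this convention,
\[
(\Psi\ast_l\Phi)_t=\psi_{\tau(t)}\circ\phi_{\lambda(t)}\qquad\text{for all } t\in[0,1].
\]
Indeed, on $[0,1/2]$ we have $\psi_0=\mathrm{id}$, and on $[1/2,1]$ we have $\phi_{\lambda(t)}=\phi_1$. Likewise, $(\Psi\circ\Phi)_t=\psi_t\circ\phi_t$. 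Both paths therefore have the form $t\mapsto \psi_{a(t)}\circ\phi_{b(t)}$ for reparametrizations $a,b:[0,1]\to[0,1]$ with $a(0)=b(0)=0$ and $a(1)=b(1)=1$.

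We then take the straight-line homotopy of the reparametrizations:
\[
G(s,t)=\psi_{(1-s)\tau(t)+st}\circ\phi_{(1-s)\lambda(t)+st},\qquad (s,t)\in[0,1]^2.
\]
Since convex combinations of elements of $[0,1]$ stay in $[0,1]$, each $G(s,t)$ is well defined. Next we check the boundary values:
\[
G(s,0)=\psi_0\circ\phi_0=\mathrm{id}_M,\qquad G(s,1)=\psi_1\circ\phi_1,
\]
\[
G(0,\cdot)=\Psi\ast_l\Phi,\qquad G(1,\cdot)=\Psi\circ\Phi.
\]
Hence $G$ is a homotopy relative to the endpoints.

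Continuity of $G$ into $\Homeo(M)$, with respect to $d_0$, follows from three facts: $\lambda$ and $\tau$ are continuous, $(a,b)\mapsto\psi_a\circ\phi_b$ is jointly continuous, and composition and inversion are continuous in the $C^0$ topology on a compact manifold. In the smooth case $\lambda$ and $\tau$ are smooth away from $t=1/2$, and they are constant near $t=1/2$ because $u$ is flat near $0$ and $1$. So the extended $\lambda,\tau$ are smooth on $[0,1]$, and $G$ is a smooth family of diffeomorphisms.

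If the homotopy is required to stay inside $\Diff^\infty(M,\Omega)$, this holds automatically, since each $G(s,t)$ is a composition of volume-preserving maps. The same argument works verbatim for topological isotopies in $\mathcal{P}\Homeo^\Omega_0(M)$.

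The only real obstacle is the bookkeeping at $t=1/2$: the extended $\lambda$ and $\tau$ must be continuous, and in the smooth setting smooth, there. This is exactly where the flatness of $u$ on $[0,\delta]$ and $[1-\delta,1]$ is used. A consequence worth recording afterwards is that, by the homotopy invariance of $\mathcal{F}_\alpha(\cdot)(1)$ up to constants noted after \eqref{Eq:intg}, the concatenation formula for $\mathcal{F}_\alpha(\Psi\ast_l\Phi)(1)$ transfers to $\Psi\circ\Phi$.
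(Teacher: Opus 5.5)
Your proof is correct and is essentially the paper's argument: your extended $\tau$ and $\lambda$ are exactly the paper's $a_0$ and $b_0$, and your $G(s,t)$ is the paper's $H_s(t)=\psi_{a_s(t)}\circ\phi_{b_s(t)}$, obtained by linearly interpolating these reparametrizations with $t$. One small improvement over the paper: you observe that the flatness of $u$ makes the extended functions smooth at $t=1/2$, which gives smoothness of the homotopy directly, whereas the paper only appeals to standard smoothing techniques.
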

	
	\begin{proof}
		We construct the homotopy explicitly using separate reparameterizations for each path. Define two continuous, non-decreasing functions \(a_0, b_0: [0,1] \to [0,1]\) by:
		\[
		a_0(t) = 
		\begin{cases}
			0, & 0 \le t \le 1/2,\\
			\tau(t), & 1/2 \le t \le 1;
		\end{cases}
		\quad \text{and} \quad
		b_0(t) = 
		\begin{cases}
			\lambda(t), & 0 \le t \le 1/2,\\
			1, & 1/2 \le t \le 1.
		\end{cases}
		\]
		Since \(\tau(1/2) = u(0) = 0\) and \(\lambda(1/2) = u(1) = 1\), both \(a_0\) and \(b_0\) are continuous on \([0,1]\). For \(s \in [0,1]\), we define the linear interpolations with the identity parameter \(t\):
		\[
		a_s(t) = (1-s)a_0(t) + st, \quad \text{and} \quad b_s(t) = (1-s)b_0(t) + st.
		\]
		These functions are continuous in both \(s\) and \(t\), map \([0,1]\) to \([0,1]\), and since \(a_s(0) = b_s(0) = 0\) and \(a_s(1) = b_s(1) = 1\) for all \(s \in [0,1]\), they fix the endpoints. We now define the family of paths \(H_s \in \mathcal{P}(G_\Omega(M))\) by:
		\[
		H_s(t) = \psi_{a_s(t)} \circ \phi_{b_s(t)}, \qquad s,t \in [0,1].
		\]
		At \(s=0\), using \(\psi_0 = \mathrm{id}_M\), this gives:
		\[
		H_0(t) = 
		\begin{cases}
			\psi_0 \circ \phi_{\lambda(t)} = \phi_{\lambda(t)}, & 0 \le t \le 1/2,\\
			\psi_{\tau(t)} \circ \phi_1, & 1/2 \le t \le 1,
		\end{cases}
		\]
		which is precisely the left concatenation \(\Psi \ast_l \Phi\). At \(s=1\), we have:
		\[
		H_1(t) = \psi_t \circ \phi_t = (\Psi \circ \Phi)_t.
		\]
		
		The endpoints are fixed because:
		\[
		H_s(0) = \psi_0 \circ \phi_0 = \mathrm{id}_M, \quad \text{and} \quad H_s(1) = \psi_1 \circ \phi_1 \quad \text{for all } s \in [0,1].
		\]
		
		Smoothness of the homotopies follows from standard smoothing techniques (see \cite{Hirsch76}). This completes the proof.
	\end{proof}
	
	\subsection{Classical flux homomorphism}
	According to \cite{AB1}, there is a group homomorphism $S_{\Omega}: G_{\Omega}(M) \longrightarrow H^{n-1}(M,\mathbb{R})/\Gamma_{\Omega}$ such that the following diagram commutes:
	\[
	\begin{array}{ccc}
		\widetilde{G_{\Omega}(M)} & \xrightarrow{\widetilde{S_{\Omega}}} & H^{n-1}(M,\mathbb{R}) \\
		\pi	\downarrow & & \downarrow \pi'\\
		G_{\Omega}(M) & \xrightarrow{S_{\Omega}} & H^{n-1}(M,\mathbb{R})/\Gamma_{\Omega},
	\end{array}
	\]
	where $\widetilde{G_{\Omega}(M)}$ represents the universal cover of $G_{\Omega}(M)$, and $\pi,\pi'$ are projections. Furthermore, if $\Phi = \{\phi^t\}_t \in \mathcal{P}\mathrm{Diff}^\infty_\Omega(M)$, then $\phi_1 \in \ker S_{\Omega}$ if and only if $\widetilde{S_{\Omega}}(\Phi) \in \Gamma_{\Omega}$ (see \cite{AB1}).
	
	\begin{remark}
		The following facts are well-known:
		\begin{enumerate}
			\item The subgroup $\Gamma_{\Omega}$ is discrete (unpublished result of Thurston; a proof is given in \cite{T19}).
			\item $\ker S_{\Omega}$ has the fragmentation property \cite{Thurs}.
			\item $G_{\Omega}(M)$ is locally connected by smooth arcs \cite{AB1}.
			\item $G_{\Omega}(M)$ is $p$-transitive \cite{Boo}.
		\end{enumerate}
	\end{remark}
	
	\begin{lemma}\label{L-0}(\cite{AB1})
		$\ker S_{\Omega}$ is path connected.
	\end{lemma}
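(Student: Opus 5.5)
The plan is to start from an arbitrary $\phi\in\ker S_{\Omega}$ and build an explicit smooth isotopy from $\mathrm{id}_M$ to $\phi$ all of whose time-$t$ maps lie in $\ker S_{\Omega}$. I will use three standard properties of the lifted flux $\widetilde{S_{\Omega}}$ from \cite{AB1}. First, it depends only on the homotopy class rel endpoints. Second, it is additive: $\widetilde{S_{\Omega}}(\Psi\circ\Phi)=\widetilde{S_{\Omega}}(\Psi)+\widetilde{S_{\Omega}}(\Phi)$, which combined with Proposition~\ref{Conca-1} also gives additivity for the left concatenation $\Psi\ast_l\Phi$. Third, for $\Phi=\{\phi^t\}_t$ it is given by
\[
\widetilde{S_{\Omega}}(\Phi)=\Bigl[\int_0^1 (\phi^s)^*\bigl(\iota_{\dot\phi^s}\Omega\bigr)\,ds\Bigr].
\]

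\textbf{Step 1 (reduce to an isotopy with zero flux).} Since $\phi\in G_{\Omega}(M)$, choose $\Phi\in\mathcal{P}\mathrm{Diff}^\infty_\Omega(M)$ ending at $\phi$. By the characterization recalled before the Remark, $\widetilde{S_{\Omega}}(\Phi)\in\Gamma_{\Omega}$. Recall that $\Gamma_{\Omega}$ is the image under $\widetilde{S_{\Omega}}$ of $\pi_1(G_{\Omega}(M))$. So there is a smooth loop $L$ in $G_{\Omega}(M)$ based at $\mathrm{id}_M$ with $\widetilde{S_{\Omega}}(L)=\widetilde{S_{\Omega}}(\Phi)$. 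Set $\Phi':=\Phi\ast_l L^{-1}$, smoothed using the function $u$ of the concatenation subsection. This is a smooth volume-preserving isotopy from $\mathrm{id}_M$ to $\phi$ with
\[
\widetilde{S_{\Omega}}(\Phi')=\widetilde{S_{\Omega}}(\Phi)-\widetilde{S_{\Omega}}(L)=0 .
\]

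\textbf{Step 2 (correct the flux along the path).} Write $\Phi'=\{\phi'^t\}_t$. For each $t$, let $a(t):=\widetilde{S_{\Omega}}(\{\phi'^{st}\}_{s\in[0,1]})$ be the flux of the truncated path. By the integral formula above, $a$ depends smoothly on $t$, with $a(0)=0$ and $a(1)=0$. Fix closed $(n-1)$-forms $\theta_1,\dots,\theta_k$ whose classes form a basis of $H^{n-1}(M,\mathbb{R})$, and let $Y_j$ be the divergence-free vector fields defined by $\iota_{Y_j}\Omega=\theta_j$. Writing $a(t)=\sum_j a_j(t)[\theta_j]$ with smooth coefficients $a_j$, define $\psi^t$ to be the time-one map of the autonomous divergence-free field
\[
Z_t:=-\sum_j a_j(t)\,Y_j .
\]
The isotopy $s\mapsto \mathrm{Fl}^{Z_t}_s$ has flux $[\iota_{Z_t}\Omega]=-a(t)$, because an autonomous flow preserves $\Omega$ and the class of $\iota_{Z_t}\Omega$. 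By additivity, the isotopy $s\mapsto \mathrm{Fl}^{Z_t}_s\circ\phi'^{st}$, which ends at $\psi^t\circ\phi'^t$, has flux $-a(t)+a(t)=0$. Hence $\psi^t\circ\phi'^t\in\ker S_{\Omega}$ for every $t$.

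\textbf{Step 3 (conclude).} At $t=0$ we have $a_j(0)=0$, so $\psi^0\circ\phi'^0=\mathrm{id}_M$. At $t=1$ we have $a_j(1)=0$, so $\psi^1=\mathrm{id}_M$ and $\psi^1\circ\phi'^1=\phi$. Moreover, $t\mapsto\psi^t\circ\phi'^t$ is smooth, since the flows depend smoothly on the parameters $a_j(t)$. It is therefore a path in $\ker S_{\Omega}$ joining $\mathrm{id}_M$ to $\phi$, and $\ker S_{\Omega}$ is path connected.

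\textbf{Main obstacle.} I expect the hardest point to be the rigorous justification of additivity of $\widetilde{S_{\Omega}}$ under both pointwise composition and concatenation, including the reverse-path sign $\widetilde{S_{\Omega}}(L^{-1})=-\widetilde{S_{\Omega}}(L)$. I would derive these from the integral formula together with the homotopy invariance of Proposition~\ref{Conca-1}. I would use the fact that $(\phi^s)^*$ acts trivially on de Rham cohomology, since each $\phi^s$ is isotopic to the identity. This reduces the flux of $\Psi\circ\Phi$ to the sum of the separate integrals.
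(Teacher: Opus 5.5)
Your proof is correct. The paper gives no proof of this lemma and simply cites \cite{AB1}; your argument is the standard one (cancel the total flux by concatenating with a loop realizing an element of $\Gamma_{\Omega}$, then cancel the cumulative flux $a(t)$ along the path by composing with time-one maps of divergence-free fields dual to a basis of $H^{n-1}(M,\mathbb{R})$), and it has exactly the structure of the paper's own proof of the $C^0$ analogue in Lemma~\ref{L-03} and Proposition~\ref{Connect}, where your path $t\mapsto\psi^t\circ\phi'^t$ is the projection of the kernel component $k(t)=\Psi(v(t))^{-1}\tilde H(t)$.
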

	
	\begin{lemma}\label{L-01}(\cite{AB1})
		Any $\Phi \in \mathcal{P}\mathrm{Diff}^\infty_\Omega(M)$ with trivial flux is homotopic, relative to fixed endpoints, to an isotopy in $\ker S_{\Omega}$.
	\end{lemma}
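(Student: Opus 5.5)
The plan is to follow Banyaga's strategy: first use a flux-compensating isotopy to produce a path whose \emph{partial} fluxes all vanish, then control homotopy classes rel endpoints. Let $\Phi=\{\phi^t\}_t\in\mathcal{P}\mathrm{Diff}^\infty_\Omega(M)$ with $\widetilde{S_\Omega}(\Phi)=0$. For $t\in[0,1]$ let $\Phi|_{[0,t]}$ denote the reparametrized restriction, and put $c(t)=\widetilde{S_\Omega}(\Phi|_{[0,t]})=\int_0^t[\iota_{\dot\phi^s}\Omega]\,ds\in H^{n-1}(M,\mathbb{R})$. Then $c$ is smooth with $c(0)=c(1)=0$.

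\textbf{Construction of the compensating isotopy.} Fix closed $(n-1)$-forms $\theta_1,\dots,\theta_k$ whose classes form a basis of $H^{n-1}(M,\mathbb{R})$, and let $X_i$ be the divergence-free fields with $\iota_{X_i}\Omega=\theta_i$. Write $c(t)=\sum_i a_i(t)[\theta_i]$; the $a_i$ are smooth and vanish at $t=0,1$. For $s\in[0,1]$ let $\rho^{s}=\{\rho^s_t\}_t$ be the isotopy generated by $Y^s_t=-s\sum_i a_i'(t)X_i$. By the formula for the flux of a generated isotopy, $\widetilde{S_\Omega}(\rho^s|_{[0,t]})=-s\,c(t)$. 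Setting $\Psi^s_t=\rho^s_t\circ\phi^t$ and using additivity of $\widetilde{S_\Omega}$ on $\widetilde{G_\Omega(M)}$ (via Proposition~\ref{Conca-1}: pointwise composition is homotopic rel endpoints to concatenation), we get $\widetilde{S_\Omega}(\Psi^1|_{[0,t]})=0$ for every $t$. Hence every $\Psi^1_t$ lies in $\ker S_\Omega$. The family $s\mapsto\Psi^s$ is a homotopy from $\Phi$ to $\Psi^1$, but its endpoints $\rho^s_1\phi^1$ move. Each $\rho^s_1$ has flux class $-s\,c(1)=0$ along $\rho^s$, so $\rho^s_1\in\ker S_\Omega$.

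\textbf{Restoring the endpoint.} To fix the endpoint, I would insert the track of the moving endpoint: the path $s\mapsto(\rho^s_1)^{-1}$, which stays inside $\ker S_\Omega$. The standard square-lemma argument on the two-parameter family $(s,t)\mapsto(\rho^s_1)^{-1}\rho^s_t\phi^t$ shows that $\Phi$ is homotopic rel endpoints to the concatenation of $t\mapsto(\rho^1_1)^{-1}\rho^1_t\phi^t$ with $s\mapsto(\rho^{s}_1)^{-1}$, suitably ordered and reparametrized as in the concatenation subsection. The first piece is a path from $(\rho^1_1)^{-1}$ to $\phi^1$ whose points are all products of elements of $\ker S_\Omega$, hence lie in $\ker S_\Omega$. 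The second piece joins $\mathrm{id}$ to $(\rho^1_1)^{-1}$ inside $\ker S_\Omega$. Their concatenation is therefore an isotopy in $\ker S_\Omega$ from $\mathrm{id}$ to $\phi^1$. Smoothness in $(s,t)$ follows from smooth dependence of flows on parameters, and corners are smoothed with the function $u$ used in the concatenation subsection.

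\textbf{Main obstacle.} The delicate point is the endpoint correction. A naive approach would join $\rho^1_1$ to $\mathrm{id}$ by an arbitrary path in $\ker S_\Omega$ using Lemma~\ref{L-0}. That could change the homotopy class by a loop with zero flux that is nontrivial in $\pi_1(G_\Omega(M))$, and vanishing of $\widetilde{S_\Omega}$ alone does not rule this out. The two-parameter family $\rho^s$ is designed to avoid this, because it provides the correcting path canonically, as the trace $s\mapsto\rho^s_1$ of a genuine homotopy. What must be checked carefully is that this trace lies in $\ker S_\Omega$ for \emph{all} $s$, which holds because $a_i(1)=0$ forces $\widetilde{S_\Omega}(\rho^s)=0$. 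One must also verify that the boundary of the square $[0,1]^2$ yields exactly the claimed relative homotopy.
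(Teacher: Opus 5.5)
Your proof is correct. The compensating isotopy $\rho^1$ kills every partial flux. The trace $s\mapsto\rho^s_1$ stays in $\ker S_\Omega$ because $a_i(1)=0$. The square lemma applied to $F(s,t)=(\rho^s_1)^{-1}\rho^s_t\phi^t$, whose bottom edge is $\Phi$ and whose right edge is constant at $\phi^1$, gives exactly the homotopy rel endpoints you claim.

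The paper gives no proof of this lemma; it is quoted from Banyaga. The paper does prove the $C^0$ analogue, Lemma~\ref{L-03}, and the mechanism used there is simpler in one respect: the compensating factor depends only on the current value of the partial flux. Concretely, take $X_v$ with $\iota_{X_v}\Omega=\zeta_v$ as in Lemma~\ref{lem:local_section}, and let $\exp$ denote the time-one flow. Then $(s,t)\mapsto \exp(-sX_{c(t)})\circ\phi^t$ is already a homotopy rel endpoints from $\Phi$ to a path in $\ker S_\Omega$. The factor is the identity at $t=0,1$ because $c(0)=c(1)=0$. At $s=1$, each point is the endpoint of $u\mapsto \exp(-uX_{c(t)})\circ\phi^{ut}$, whose flux is $-c(t)+c(t)=0$.

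Your $\rho^1_t$ is instead the flow of the time-dependent field $-\sum_i a_i'(t)X_i$. Since the $X_i$ need not commute, it depends on the whole history of $c$, and in general $\rho^1_1\neq\mathrm{id}$. That is precisely what forces your endpoint correction. So the ``main obstacle'' you identify is real for your construction, and you resolve it correctly, but it is an artifact of the choice of compensator. The direct version avoids the square lemma and the corner smoothing. Your version shows that any smooth family $\rho^s$ with $\rho^0$ constant and zero total flux for each $s$ can serve as a compensator, since the endpoint defect is absorbed by the trace of the homotopy, which automatically lies in $\ker S_\Omega$.
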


	\begin{lemma}[Global section of the flux map with continuity]
		\label{lem:local_section}
		Let $(M,\Omega)$ be a closed oriented manifold with a volume form $\Omega$. There exists a globally defined, continuous map 
		\[
		\Psi: H^{n-1}(M;\mathbb{R}) \longrightarrow \mathcal{P}\mathrm{Diff}^\infty_\Omega(M), \qquad v \longmapsto \psi_v
		\]
		which acts as a section of the flux map, satisfying:
		\begin{enumerate}
			\item $\widetilde{S}_\Omega(\psi_v) = v$ for all $v \in H^{n-1}(M;\mathbb{R})$;
			\item The map $v \mapsto \psi_v$ is continuous with respect to the compact-open ($C^0$) topology on the path space;
			\item In particular, $\psi_v \longrightarrow \mathrm{id}_M$ in the $C^0$ path space as $v \to 0$.
		\end{enumerate}
	\end{lemma}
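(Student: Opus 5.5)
The construction will be linear in $v$: fix a basis, realize each basis class by a fixed divergence-free vector field, and take the flow of the corresponding linear combination. Choose closed $(n-1)$-forms $\beta_1,\dots,\beta_k$ whose classes form a basis of $H^{n-1}(M;\mathbb{R})$; for instance, take the harmonic representatives with respect to $g$. Because $\iota_\Omega : TM \to \bigwedge^{n-1}T^*M$ is a bundle isomorphism (as in the proof of Lemma~\ref{lem:spanning}), each $\beta_j$ determines a unique smooth vector field $X_j$ with $\iota_{X_j}\Omega=\beta_j$. Since $d\beta_j=0$, we get $\mathcal{L}_{X_j}\Omega=0$, so each $X_j$ is divergence-free.

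For $v=\sum_j v_j[\beta_j]$, set $X_v:=\sum_j v_jX_j$ and let $\psi_v=\{\psi_v^t\}_{t\in[0,1]}$ be the flow of the autonomous field $X_v$. The field $X_v$ is divergence-free, so $\psi_v\in\mathcal{P}\mathrm{Diff}^\infty_\Omega(M)$. The map $v\mapsto X_v$ is linear into $C^\infty$ vector fields, and hence the map is globally defined on $H^{n-1}(M;\mathbb{R})$, with no local-section or patching problem.

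For property (1), I recall that the flux of a volume-preserving isotopy is $\widetilde S_\Omega(\Phi)=\int_0^1[\iota_{\dot\phi^t}\Omega]\,dt$, following \cite{AB1}. For the autonomous flow we have $\dot\psi_v^t=X_v$ for all $t$, so
\[
\widetilde S_\Omega(\psi_v)=[\iota_{X_v}\Omega]=\sum_j v_j[\beta_j]=v .
\]
Two remarks apply here. First, $\widetilde S_\Omega$ is evaluated on the path itself, i.e. on its class in the universal cover, and that is how the statement is phrased. Second, the sign or normalization convention of \cite{AB1} may differ from the one above; if so, I replace $X_v$ by $-X_v$ or rescale accordingly.

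For properties (2) and (3), I use smooth dependence of flows on parameters. Since $M$ is compact, $X_v$ is complete. The map $(v,t,x)\mapsto\psi_v^t(x)$ is continuous, and in fact smooth, because it solves an ODE depending smoothly on the parameter $v$. Inverses are handled the same way, since $(\psi_v^t)^{-1}=\psi_{-v}^t$. Uniform continuity on compact sets $\{|v|\le R\}\times[0,1]\times M$ then gives continuity of $v\mapsto\psi_v$ in the metric $\bar d$. A quantitative version is also available: by Gronwall, $\bar d(\psi_v,\psi_w)\le C_R|v-w|$ for $|v|,|w|\le R$, since $\|X_v-X_w\|_\infty\le C|v-w|$. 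For (3), $X_0=0$ gives $\psi_0\equiv\mathrm{id}_M$, and also $\bar d(\psi_v,\mathrm{id})\le \|X_v\|_\infty\le C|v|$, which tends to $0$.

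The main obstacle is only in (1): matching the paper's exact normalization of $\widetilde S_\Omega$. This means checking that the flux of \cite{AB1} is indeed $\int_0^1[\iota_{\dot\phi^t}\Omega]dt$ rather than a version with $(\phi^t)^*$ inserted. The two agree in cohomology because $\phi^t$ is isotopic to the identity, so the choice does not affect the argument. Everything else is standard ODE theory.
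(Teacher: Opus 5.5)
Your proposal is correct and follows the paper's own proof. Both use harmonic representatives of a basis of $H^{n-1}(M;\mathbb{R})$, take the divergence-free fields $X_v$ dual to them under $\iota_\Omega$ (linear in $v$), let $\psi_v$ be the autonomous flow so that $\widetilde S_\Omega(\psi_v)=[\iota_{X_v}\Omega]=v$, and get continuity from Gr\"onwall together with $d_g(\psi_v^t(x),x)\le\|X_v\|_{C^0}$; your explicit handling of the inverse term in $d_0$ via $(\psi_v^t)^{-1}=\psi_{-v}^t$ is a detail the paper leaves implicit.
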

	
	\begin{proof}
		Fix a Riemannian metric $g$ on $M$, and a basis $\{[ \eta_1 ], \dots, [ \eta_b ]\}$ of $H^{n-1}(M;\mathbb{R})$, where each $\eta_i \in \Omega^{n-1}(M)$ is a smooth harmonic form. Any $v \in H^{n-1}(M;\mathbb{R})$ can be uniquely written as $v = \sum_{i=1}^b c_i [ \eta_i ]$. We assign to each $v$ the closed $(n-1)$-form: $
		\zeta_v := \sum_{i=1}^b c_i \eta_i.$  The map $v \mapsto \zeta_v$ is linear, meaning $\zeta_v \to 0$ in the smooth $C^\infty$-topology on forms as $v \to 0$ in the Euclidean norm of $H^{n-1}(M;\mathbb{R})$. The contraction with the volume form $\Omega$ defines a vector bundle isomorphism:
		\[
		\iota_\Omega : TM \longrightarrow \bigwedge^{n-1}T^*M, \qquad X \longmapsto i(X)\Omega.
		\]
		Since $M$ is compact, this induces a Fréchet space isomorphism on the spaces of smooth sections. Let $X_v$ be the unique smooth vector field such that $i(X_v)\Omega = \zeta_v$. Because the inverse map is continuous, the linear assignment $v \mapsto X_v$ is continuous, and $X_v \to 0$ smoothly as $v \to 0$. Furthermore, because $\zeta_v$ is closed, we have:
		\[
		\mathcal{L}_{X_v}\Omega = d(i(X_v)\Omega) = d\zeta_v = 0,
		\]
		so $X_v$ is globally divergence-free. Since $M$ is compact, $X_v$ is integrable. We define the isotopy $\psi_v = \{\psi_v(t)\}_{t \in [0,1]}$ to be the flow of $X_v$:
		\[
		\frac{d}{dt}\psi_v(t) = X_v \circ \psi_v(t), \qquad \psi_v(0) = \mathrm{id}_M.
		\]
		Since $X_v$ is divergence-free, its flow $\psi_v(t)$ preserves the volume form $\Omega$ for all $t$, meaning $\psi_v \in \mathcal{P}\mathrm{Diff}^\infty_\Omega(M)$. By the definition of the smooth flux homomorphism, we evaluate:
		\[
		\widetilde{S}_\Omega(\psi_v) = \left[ \int_0^1 i(X_v)\Omega \, dt \right] = [\zeta_v] = v,
		\]
		proving that $\Psi(v) = \psi_v$ is a valid section of the flux. Finally, we address the continuity of the section. The assignment $v \mapsto X_v$ is linear on a finite-dimensional vector space, hence continuous in all $C^k$ norms. By standard ODE flow stability (Grönwall's inequality), the flow map is continuous with respect to the vector field. Explicitly, the distance any point is moved by the flow in time $t \in [0,1]$ is bounded by the supremum norm of the vector field:
		\[
		d_g(\psi_v(t)(x), x) \le \int_0^t \|X_v(\psi_v(s)(x))\|_g \, ds \le \|X_v\|_{C^0}.
		\]
		As $v \to 0$, we have $\|X_v\|_{C^0} \to 0$, which guarantees that $\psi_v$ converges uniformly to the constant path at the identity in the compact-open topology.
	\end{proof}
	
	\section{Flux homomorphism for volume-preserving homeomorphisms}
	
	In this section we study the convergence of the sequence of real numbers $\mathcal{F}_{\alpha}(\Phi_{i})(1)(x)$ for each $x\in M$, with a view to defining and studying the flux homomorphism for isotopies of volume-preserving homeomorphisms.
	
	\begin{proposition}\label{pro1}
		Let $\alpha \in \mathcal{Z}^{1}(M)$. If $(\Phi_{i})_{i}$ is a Cauchy sequence in $\mathcal{P}\mathrm{Diff}^\infty_\Omega(M)$ in the $C^0$-metric, then the sequence of smooth functions $\mathcal{F}_{\alpha}(\Phi_{i})(1)$ is also Cauchy and the limit $\mathcal{R}(H,\alpha)\coloneqq \lim_{i\to\infty}(\mathcal{F}_{\alpha}(\Phi_{i})(1))$ exists, where $H = \lim_{C^0}(\Phi_{i})$.
	\end{proposition}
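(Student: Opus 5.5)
The proof follows the strategy outlined in the preliminaries. The engine is Lemma~\ref{lem:key_estimate}, combined with completeness of the Banach space $C^0(M,\mathbb{R})$ under the sup norm.

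First, since $(\Phi_i)_i$ is Cauchy for $\bar d$, we fix $\varepsilon>0$ and choose $N$ so that, for all $i,j\ge N$,
\[
\bar d(\Phi_i,\Phi_j)<\min\bigl\{r(g)/2,\ \varepsilon/(1+\|\alpha\|_\infty)\bigr\}.
\]
For such $i,j$ the hypotheses of Lemma~\ref{lem:key_estimate} hold, so
\[
\sup_{z\in M}\bigl|\mathcal{F}_\alpha(\Phi_i)(1)(z)-\mathcal{F}_\alpha(\Phi_j)(1)(z)\bigr|\le \|\alpha\|_\infty\,\bar d(\Phi_i,\Phi_j)<\varepsilon .
\]
Here $\|\alpha\|_\infty<\infty$ because $M$ is compact and $\alpha$ is smooth. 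Thus the sequence of smooth functions $\mathcal{F}_\alpha(\Phi_i)(1)$ is Cauchy in the uniform norm. Only the tail of the sequence matters, and the injectivity-radius restriction is harmless because it is eventually satisfied.

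Second, $C^0(M,\mathbb{R})$ with the sup norm is complete. Hence there is a continuous function $\mathcal{R}(H,\alpha)$ with $\mathcal{F}_\alpha(\Phi_i)(1)\to\mathcal{R}(H,\alpha)$ uniformly, and in particular pointwise for each $x\in M$. The limit $H=\lim_{C^0}\Phi_i$ exists in $\mathcal{P}(\Homeo(M),\mathrm{id}_M)$ by completeness of that path space, which was noted before \eqref{eq:Gev1}. This makes the notation $\mathcal{R}(H,\alpha)$ meaningful at this stage, although we do not yet claim it depends only on $H$. That independence is deferred to the following proposition: interleave two approximating sequences of the same $H$ and apply the first step to the merged Cauchy sequence.

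The main obstacle lies entirely in Lemma~\ref{lem:key_estimate}, which we are allowed to assume. If one wanted to see why it holds, the idea is to join $\phi_i^t(x)$ and $\phi_j^t(x)$ by the unique minimizing geodesic, which exists because their distance is below $r(g)/2$. This yields a homotopy between the orbit curves, and integrating the closed form $\alpha$ over it bounds the difference of the orbit integrals by the length of the endpoint geodesic times $\|\alpha\|_\infty$. Granting the lemma, the remaining steps are routine.
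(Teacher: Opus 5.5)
Your proposal is correct and follows the paper's proof: both restrict to the tail where $\bar d(\Phi_i,\Phi_j)\le r(g)/2$, apply Lemma~\ref{lem:key_estimate} to get that $\mathcal{F}_\alpha(\Phi_i)(1)$ is uniformly Cauchy, and conclude by completeness of $C^0(M,\mathbb{R})$ under the sup norm. Your explicit remark that independence from the approximating sequence is a separate issue (handled by interleaving two sequences) is a useful clarification that the paper leaves implicit.
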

	
	\begin{proof}
		Since $(\Phi_i)_i$ is Cauchy in the $C^0$-metric (equivalently, with respect to $\bar{d}$; the two topologies coincide on $\mathcal{P}\mathrm{Diff}^\infty_\Omega(M)$), for every $\varepsilon > 0$ there exists $N$ such that for all $i,j \ge N$, we have $\bar{d}(\Phi_i,\Phi_j) \le r(g)/2$. For such $i,j$, Lemma \ref{lem:key_estimate} implies
		\[
		\sup_{z\in M} \bigl| \mathcal{F}_{\alpha}(\Phi_i)(1)(z) - \mathcal{F}_{\alpha}(\Phi_j)(1)(z) \bigr| \le \|\alpha\|_\infty \, \bar{d}(\Phi_i,\Phi_j).
		\]
		Since $(\Phi_i)$ is Cauchy, the right-hand side tends to zero as $i,j \to \infty$. Thus the sequence of continuous functions $\mathcal{F}_{\alpha}(\Phi_i)(1)$ is uniformly Cauchy and hence converges uniformly to a continuous function. Define $\mathcal{R}(H,\alpha)(z)$ to be this limit. The conclusion follows.
	\end{proof}
	
	\begin{proposition}\label{pro2}
		Let $\alpha \in \mathcal{Z}^{1}(M)$. Assume that $\Phi_i = (\{\phi_i^t\}_t)_i$ and $\Psi_i = (\{\psi_i^t\}_t)_i$ are Cauchy sequences in $\mathcal{P}\mathrm{Diff}^\infty_\Omega(M)$ with the $C^0$-metric such that $\phi_i^1 \xrightarrow{C^0} h \xleftarrow{C^0} \psi_i^1$. Then, for all $x,y\in M$ and for each piecewise smooth path $\gamma \in \mathcal{C}^{\mathrm{pw}}(x\to y)$, the limits $\lim_{C^0}\int_{\gamma}\big((\phi_i^1)^*\alpha-\alpha\big)$ and $\lim_{C^0}\int_{\gamma}\big((\psi_i^1)^*\alpha-\alpha\big)$ agree.
	\end{proposition}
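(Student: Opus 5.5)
The key observation is that the quantity $\int_\gamma\big((\phi^1)^*\alpha-\alpha\big)$ depends only on the time-one map $\phi^1$, and not on the isotopy. Indeed, for a diffeomorphism $\phi$ and a piecewise smooth path $\gamma$ we have $\int_\gamma\phi^*\alpha=\int_{\phi\circ\gamma}\alpha$. So I would compare the two limits by comparing the curves $\phi_i^1\circ\gamma$ and $\psi_i^1\circ\gamma$, which are $C^0$-close once $i$ is large. I would not go through the functions $\mathcal{F}_\alpha$ at all. This matters because the two sequences $(\Phi_i)$ and $(\Psi_i)$ need not be homotopic, so a flux-type argument could pick up a period. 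The pointwise argument below avoids that issue.

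\textbf{Step 1 (existence of the limits).} By \eqref{Eq:intg}, $\int_\gamma\big((\phi_i^1)^*\alpha-\alpha\big)=\mathcal{F}_\alpha(\Phi_i)(1)(y)-\mathcal{F}_\alpha(\Phi_i)(1)(x)$. By Proposition~\ref{pro1} the right-hand side converges, so each limit exists. The same holds for $(\Psi_i)$. It therefore suffices to show that the difference $D_i:=\int_{\phi_i^1\circ\gamma}\alpha-\int_{\psi_i^1\circ\gamma}\alpha$ tends to $0$.

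\textbf{Step 2 (geodesic homotopy).} Since $\phi_i^1\to h$ and $\psi_i^1\to h$ in $C^0$, we have $\delta_i:=\sup_z d_g(\phi_i^1(z),\psi_i^1(z))\to0$. For large $i$, $\delta_i<r(g)/2$. For each $s$ in the domain of $\gamma$, let $\sigma_s$ be the unique minimizing geodesic from $\psi_i^1(\gamma(s))$ to $\phi_i^1(\gamma(s))$. This geodesic depends continuously, indeed piecewise smoothly, on $s$, because we are below the injectivity radius. The map $(s,u)\mapsto\sigma_s(u)$ is then a homotopy between $\psi_i^1\circ\gamma$ and $\phi_i^1\circ\gamma$ whose side edges are the short geodesics $\sigma_0$ and $\sigma_1$. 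Since $\alpha$ is closed, Stokes' theorem applied to this square (after smoothing if necessary) gives
\[
D_i=\int_{\sigma_1}\alpha-\int_{\sigma_0}\alpha .
\]

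\textbf{Step 3 (estimate).} Each $\sigma_j$ has length at most $\delta_i$. Hence $|D_i|\le 2\|\alpha\|_\infty\,\delta_i\to0$, and the two limits coincide.

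The only delicate point is Step 2, namely that the geodesic interpolation is a legitimate piecewise smooth homotopy to which Stokes' theorem applies. I would handle it via the smoothness of the exponential map on the set $\{d_g<r(g)\}$. Alternatively, one can cover $\gamma$ by small convex balls, each of radius smaller than $r(g)/2$, and apply the Poincaré lemma in each ball to write $\alpha=df_k$ locally. Then $D_i$ telescopes into endpoint differences that are each bounded by $\|\alpha\|_\infty\delta_i$.
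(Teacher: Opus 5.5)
Your proof is correct and takes essentially the paper's route: both limits exist by \eqref{Eq:intg} and Proposition~\ref{pro1}, and you then compare $\int_{\phi_i^1\circ\gamma}\alpha$ with $\int_{\psi_i^1\circ\gamma}\alpha$, bounding the difference by a multiple of $\|\alpha\|_\infty\sup_z d_g(\phi_i^1(z),\psi_i^1(z))$. Your geodesic-homotopy and Stokes step supplies the justification the paper's one-line estimate leaves implicit, and your constant $2$ (one short geodesic at each endpoint) is the correct one, whereas the per-$i$ bound with constant $1$ suggested by the paper's display can fail (e.g.\ for the shear $(x,y)\mapsto(x+\epsilon\sin 2\pi y,\,y)$ on $\mathbb{T}^2$ with $\alpha=dx$ and $\gamma$ the vertical segment from $(0,\tfrac14)$ to $(0,\tfrac34)$); this does not affect the conclusion once $i\to\infty$.
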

	
	\begin{proof}
		By Proposition \ref{pro1}, the limits exist and equal $\mathcal{R}(H,\alpha)(y)-\mathcal{R}(H,\alpha)(x)$ and $\mathcal{R}(H',\alpha)(y)-\mathcal{R}(H',\alpha)(x)$ respectively, where $H = \lim_{C^0} \Phi_i$ and $H' = \lim_{C^0} \Psi_i$. Then
		\[
		\begin{aligned}
			\Bigl|\lim_i\int_{\gamma}((\phi_i^1)^*\alpha-\alpha) - \lim_i\int_{\gamma}((\psi_i^1)^*\alpha-\alpha)\Bigr|
			&= \lim_i \Bigl|\int_{\phi_i^1\circ\gamma}\alpha - \int_{\psi_i^1\circ\gamma}\alpha\Bigr| \\
			&\le \|\alpha\|_\infty \lim_i \sup_{z\in M} d_g(\phi_i^1(z),\psi_i^1(z)) \\
			&= 0.
		\end{aligned}
		\]
		Thus the limits agree.
	\end{proof}
	
	\begin{proposition}\label{pro3}
		Let $(\Phi_i)_i$ be a sequence in $\mathcal{P}\mathrm{Diff}^\infty_\Omega(M)$ and $\alpha \in \mathcal{Z}^{1}(M)$. If $\Phi_i = \{\phi_i^t\}_t \xrightarrow{C^0} H$ with $h = H(1) := \lim_{C^0}(\phi_i^1)$, then for all $x,y\in M$ and each piecewise smooth path $\gamma \in \mathcal{C}^{\mathrm{pw}}(x\to y)$, the quantity $I_{\alpha}(h,x,y)$ coincides with $\mathcal{R}(H,\alpha)(y)-\mathcal{R}(H,\alpha)(x)$.
	\end{proposition}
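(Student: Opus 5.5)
The plan is to pass to the limit in the smooth identity \eqref{Eq:intg}, applied to each $\Phi_i$ separately, using the uniform convergence given by Proposition~\ref{pro1}. We are given a sequence $\Phi_i=\{\phi_i^t\}_t$ in $\mathcal{P}\mathrm{Diff}^\infty_\Omega(M)$ converging in $\bar d$ to $H$. A convergent sequence is Cauchy, so Proposition~\ref{pro1} applies. It gives that $\mathcal{F}_\alpha(\Phi_i)(1)$ converges uniformly on $M$ to the continuous function $\mathcal{R}(H,\alpha)$.

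Next, fix $x,y\in M$ and a piecewise smooth path $\gamma\in\mathcal{C}^{\mathrm{pw}}(x\to y)$. For each $i$, $\phi_i^1$ is the endpoint of the smooth volume-preserving isotopy $\Phi_i$. The Moser formula \eqref{Eq:disp} shows that $(\phi_i^1)^*\alpha-\alpha=d\bigl(\mathcal{F}_\alpha(\Phi_i)(1)\bigr)$. Integrating along $\gamma$, which is legitimate piece by piece by the fundamental theorem of calculus, gives
\[
\int_\gamma\bigl((\phi_i^1)^*\alpha-\alpha\bigr)=\mathcal{F}_\alpha(\Phi_i)(1)(y)-\mathcal{F}_\alpha(\Phi_i)(1)(x).
\]
Uniform convergence implies in particular pointwise convergence at $x$ and $y$. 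So the right-hand side converges to $\mathcal{R}(H,\alpha)(y)-\mathcal{R}(H,\alpha)(x)$. Hence the limit defining $I_\alpha(h,x,y)$ in \eqref{Eq:intgg} exists and equals this difference.

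It remains to check that this identification is consistent, i.e.\ that the value does not depend on the choices made.
\begin{itemize}
\item \emph{Independence of $\gamma$:} the right-hand side depends only on $x$ and $y$, so the value is the same for every piecewise smooth path.
\item \emph{Independence of the approximating sequence:} if $\Psi_i$ is another sequence with $\psi_i^1\to h$, Proposition~\ref{pro2} shows that the limits of $\int_\gamma\bigl((\psi_i^1)^*\alpha-\alpha\bigr)$ and $\int_\gamma\bigl((\phi_i^1)^*\alpha-\alpha\bigr)$ agree. Therefore $I_\alpha(h,x,y)$ is well defined as a function of $h$.
\end{itemize}
Combining these with the previous paragraph yields the asserted equality for the given $H$.

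The step needing the most care is the justification of \eqref{Eq:intg} for each $i$, and in particular the normalization of $\mathcal{F}_\alpha(\Phi_i)(1)$. It must be the specific primitive given by the Moser integral, not a primitive defined only up to a constant. This matters less here because only differences of values appear, so any additive constant cancels. Consequently, the only genuine analytic input is the uniform Cauchy estimate of Lemma~\ref{lem:key_estimate}, already packaged in Proposition~\ref{pro1}.
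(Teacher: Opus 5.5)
Your proof is correct and follows the paper's argument: apply \eqref{Eq:intg} to each $\Phi_i$, then pass to the limit $i\to\infty$ using the uniform convergence of $\mathcal{F}_\alpha(\Phi_i)(1)$ to $\mathcal{R}(H,\alpha)$ from Proposition~\ref{pro1}. Your extra remarks on independence of $\gamma$ and of the approximating sequence (via Proposition~\ref{pro2}) are not needed for the stated equality, but they are consistent with the paper.
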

	
	\begin{proof}
		For each $i$, equation (\ref{Eq:intg}) gives:
		\[
		\int_{\gamma}\big((\phi_i^1)^*\alpha-\alpha\big) = \mathcal{F}_{\alpha}(\Phi_i)(1)(y) - \mathcal{F}_{\alpha}(\Phi_i)(1)(x).
		\]
		Taking the limit as $i \to \infty$ and using Proposition \ref{pro1} yields the result.
	\end{proof}
	
	The symplectic version of the following result can be found in \cite{TMH}. Its proof follows similarly by taking limits of the smooth product formula.
	
	\begin{corollary}\label{cor:product}
		Let $\Phi_i = (\{\phi_i^t\}_t)_i$ and $\Psi_i = (\{\psi_i^t\}_t)_i$ be sequences in $\mathcal{P}\mathrm{Diff}^\infty_\Omega(M)$ such that $\lim_{C^0}\Phi_i  = H:= (\{h_t\}_t)$ and $\lim_{C^0}\Psi_i = H':= (\{h'_t\}_t)$. Then for each $\alpha \in \mathcal{Z}^{1}(M)$,
		\[
		\mathcal{R}(H\circ H', \alpha) = \mathcal{R}(H',\alpha) + \mathcal{R}(H,\alpha)\circ h'_1,
		\]
		where $\mathcal{R}(H,\alpha)\circ h'_1$ denotes the function $x \mapsto \mathcal{R}(H,\alpha)(h_1'(x))$.
	\end{corollary}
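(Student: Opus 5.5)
Since both sides of the asserted identity are defined as uniform limits, the plan is to prove the corresponding identity for the smooth approximants and then let $i\to\infty$. First I would note that $\Phi_i\circ\Psi_i=\{\phi_i^t\circ\psi_i^t\}_t$ is again a smooth volume-preserving isotopy, so it lies in $\mathcal{P}\mathrm{Diff}^\infty_\Omega(M)$. It converges to $H\circ H'=\{h_t\circ h'_t\}_t$ in the metric $\bar d$. To see this, I would bound
\[
d_g\bigl(\phi_i^t\psi_i^t(x),h_th'_t(x)\bigr)\le d_g\bigl(\phi_i^t(\psi_i^t x),h_t(\psi_i^t x)\bigr)+d_g\bigl(h_t(\psi_i^t x),h_t(h'_t x)\bigr).
\]
The first term is at most $\bar d(\Phi_i,H)$. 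The second tends to $0$ uniformly in $(t,x)$ because the map $(t,y)\mapsto h_t(y)$ is uniformly continuous on the compact set $[0,1]\times M$. The inverse terms in $d_0$ are handled the same way, using $(\phi^t\psi^t)^{-1}=(\psi^t)^{-1}(\phi^t)^{-1}$ and the uniform continuity of $(t,y)\mapsto h'^{-1}_t(y)$. It follows that $\mathcal{R}(H\circ H',\alpha)=\lim_i\mathcal{F}_\alpha(\Phi_i\circ\Psi_i)(1)$ by Proposition~\ref{pro1}, since convergent sequences are Cauchy.

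Next I would establish the smooth identity
\[
\mathcal{F}_\alpha(\Phi_i\circ\Psi_i)(1)=\mathcal{F}_\alpha(\Psi_i)(1)+\mathcal{F}_\alpha(\Phi_i)(1)\circ\psi_i^1 .
\]
By Proposition~\ref{Conca-1}, the isotopy $\Phi_i\circ\Psi_i$ is homotopic rel endpoints to the left concatenation $\Phi_i\ast_l\Psi_i$, which first runs $\Psi_i$ and then $\Phi_i$ composed with $\psi_i^1$. The concatenation formula of the previous subsection gives exactly the right-hand side for $\Phi_i\ast_l\Psi_i$. One technical point is that $\Phi_i\ast_l\Psi_i$ is built from reparametrizations and so need not be smooth at $t=1/2$. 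If that is a concern, one can take $u$ flat at its endpoints, as in its definition; then the concatenation is smooth.

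The main obstacle is to pass from "homotopic rel endpoints" to equality of the functions $\mathcal{F}_\alpha(\cdot)(1)$ themselves, and not merely equality up to a constant. Here I would argue directly with orbits. For each $x$, the family $H_s(t)(x)$ from the proof of Proposition~\ref{Conca-1} is a homotopy of curves from $x$ to $\phi_i^1\psi_i^1(x)$ with both endpoints fixed. Because $\alpha$ is closed, Stokes' theorem shows that $\int\alpha$ along these curves is independent of $s$, so the two functions agree pointwise. The integral is well defined if the homotopy is smoothed, or by using the standard homotopy invariance of line integrals of closed forms along continuous curves.

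Finally I would pass to the limit. The left side converges uniformly to $\mathcal{R}(H\circ H',\alpha)$, and $\mathcal{F}_\alpha(\Psi_i)(1)\to\mathcal{R}(H',\alpha)$ uniformly. For the last term I would write
\[
\bigl|F_i(\psi_i^1x)-R(h'_1x)\bigr|\le \|F_i-R\|_\infty+\bigl|R(\psi_i^1x)-R(h'_1x)\bigr|,
\]
with $F_i=\mathcal{F}_\alpha(\Phi_i)(1)$ and $R=\mathcal{R}(H,\alpha)$. Both terms tend to $0$ uniformly, by Proposition~\ref{pro1}, the uniform continuity of $R$ on compact $M$, and $\psi_i^1\to h'_1$ uniformly. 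This yields the stated formula.
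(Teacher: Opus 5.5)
Your proposal is correct and follows the same route as the paper. You obtain the smooth identity $\mathcal{F}_\alpha(\Phi_i\circ\Psi_i)(1)=\mathcal{F}_\alpha(\Psi_i)(1)+\mathcal{F}_\alpha(\Phi_i)(1)\circ\psi_i^1$ from Proposition~\ref{Conca-1} and the concatenation formula, then pass to the uniform limit via Proposition~\ref{pro1}. Along the way you check several things the paper only asserts: that $\Phi_i\circ\Psi_i\to H\circ H'$ in $\bar d$, that the homotopy rel endpoints gives pointwise equality by an orbitwise Stokes argument, and that $\mathcal{F}_\alpha(\Phi_i)(1)\circ\psi_i^1$ converges uniformly.
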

	
	\begin{proof}
		Let $(\Phi_i)_i \to H$ and $(\Psi_i)_i \to H'$ in $\bar{d}$. By Proposition \ref{Conca-1}, the paths $\Phi_i \circ \Psi_i$ and $\Phi_i \ast_l \Psi_i$ are homotopic relative to their endpoints, and since the map $\Phi \mapsto\mathcal{F}_\alpha(\Phi)(1)$ is invariant under homotopies relative to endpoints (for each closed $1$-form $\alpha$), we have
		\[
		\mathcal{F}_\alpha(\Phi_i \circ \Psi_i)(1) = \mathcal{F}_\alpha(\Phi_i \ast_l \Psi_i)(1).
		\]
		By the smooth product formula (the explicit orbit description from Section 2), we have:
		\[
		\mathcal{F}_\alpha(\Phi_i \ast_l \Psi_i)(1) = \mathcal{F}_\alpha(\Psi_i)(1) + \mathcal{F}_\alpha(\Phi_i)(1) \circ \psi_i^1.
		\]
		Therefore,
		\[
		\mathcal{F}_\alpha(\Phi_i \circ \Psi_i)(1) = \mathcal{F}_\alpha(\Psi_i)(1) + \mathcal{F}_\alpha(\Phi_i)(1) \circ \psi_i^1.
		\]
		Taking the limit as $i \to \infty$: by Proposition \ref{pro1}
		\begin{itemize}
			\item $\mathcal{F}_\alpha(\Phi_i \circ \Psi_i)(1) \to \mathcal{R}(H\circ H', \alpha)$;
			\item $\mathcal{F}_\alpha(\Psi_i)(1) \to \mathcal{R}(H',\alpha)$;
			\item $\mathcal{F}_\alpha(\Phi_i)(1) \circ \psi_i^1 \to \mathcal{R}(H,\alpha) \circ h'_1$ by the $C^0$-continuity of composition.
		\end{itemize}
		The result follows.
	\end{proof}
	
	\begin{lemma}\label{lem:chi_formula}
		Let $\alpha$ be a non-trivial closed $1$-form. For each $h \in \mathbb{G}^{\Omega}(M)$ and for all $z \in M$, we have
		\[
		\chi(h,\alpha)(z) = -\Bigl( \mathcal{R}(H,\alpha)(z) - \int_M \mathcal{R}(H,\alpha)\,\Omega \Bigr),
		\]
		where $H$ is the $C^0$-limit of any Cauchy sequence $(\Phi_i)_i = (\{\phi_i^t\}_t)_i$ of isotopies in $\mathcal{P}\mathrm{Diff}^\infty_\Omega(M)$ with $h = \lim_{C^0}(\phi_i^1)$.
	\end{lemma}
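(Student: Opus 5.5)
The plan is to compute $\chi(h,\alpha)(z)$ directly from its definition \eqref{Eq:Funct}. The first step is to replace $I_\alpha(h,z,y)$ by a difference of values of $\mathcal{R}(H,\alpha)$, using Proposition~\ref{pro3}, and then integrate that difference against $\Omega$.

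First, fix $h\in\mathbb{G}^\Omega(M)$ and a Cauchy sequence $(\Phi_i)_i\subset\mathcal{P}\mathrm{Diff}^\infty_\Omega(M)$ with $\Phi_i\to H$ in $\bar d$ and $\phi_i^1\to h$. Proposition~\ref{pro1} gives uniform convergence $\mathcal{F}_\alpha(\Phi_i)(1)\to\mathcal{R}(H,\alpha)$, so the limit is continuous and bounded on $M$. Proposition~\ref{pro3} then gives, for every $z,y\in M$,
\[
I_\alpha(h,z,y)=\mathcal{R}(H,\alpha)(y)-\mathcal{R}(H,\alpha)(z).
\]
This value does not depend on the path $\gamma$, and by Proposition~\ref{pro2} it does not depend on the approximating sequence either. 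Consequently $\chi(h,\alpha)$ is well defined, which is the main point to check: the right-hand side of the claimed formula must not depend on the choice of $H$.

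Second, integrate in $y$. Since $y\mapsto \mathcal{R}(H,\alpha)(y)-\mathcal{R}(H,\alpha)(z)$ is continuous on the compact manifold $M$, it is integrable. Using $\int_M\Omega=1$, we obtain
\[
\chi(h,\alpha)(z)=\int_M\mathcal{R}(H,\alpha)\,\Omega-\mathcal{R}(H,\alpha)(z),
\]
which is exactly the stated identity. If one wants to avoid interchanging the limit and the integral, the same computation can be done at the smooth level with $\mathcal{F}_\alpha(\Phi_i)(1)$ and then passed to the limit, which is justified by uniform convergence.

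The main obstacle is independence of the choice of $H$ when two topological isotopies ending at $h$ are not homotopic. Proposition~\ref{pro2} handles this: the estimate $|\int_{\phi_i^1\circ\gamma}\alpha-\int_{\psi_i^1\circ\gamma}\alpha|\to0$ uses only the endpoints. It follows that $\mathcal{R}(H,\alpha)-\mathcal{R}(H',\alpha)$ has vanishing differences between any two points, so it is a constant $c$. The normalized expression $\mathcal{R}-\int_M\mathcal{R}\,\Omega$ is invariant under adding $c$, so the formula holds for any admissible $H$, as claimed. The hypothesis that $\alpha$ is non-trivial plays no role; for $\alpha=0$ both sides vanish.
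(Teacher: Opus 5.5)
Your proposal is correct and follows the paper's proof. The paper likewise uses Propositions~\ref{pro1}--\ref{pro2} to identify $I_\alpha(h,z,y)$ with $\mathcal{R}(H,\alpha)(y)-\mathcal{R}(H,\alpha)(z)$, then integrates in $y$ against the normalized $\Omega$. Your extra paragraph on independence of $H$ is sound but not needed: the left-hand side is defined through $I_\alpha(h,\cdot,\cdot)$, which depends only on $h$, so the identity holding for every admissible $H$ already forces the right-hand side to be independent of $H$.
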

	
	\begin{proof}
		From Propositions \ref{pro1} and \ref{pro2}, for all $y\in M$ and any piecewise smooth path $\gamma$ from $z$ to $y$, we have 
		
		\begin{eqnarray*} 	
			\mathcal{R}(H,\alpha)(y) - \mathcal{R}(H,\alpha)(z)  &=& \lim_{C^0}\int_{\gamma}\big((\phi_i^1)^*\alpha-\alpha\big)\\
			&=&  I_\alpha(h,z,y).
		\end{eqnarray*}
		Integrating with respect to $y$ gives:
		\begin{eqnarray*}
			\chi(h,\alpha)(z) &=& \int_M I_\alpha(h,z,y)\,\Omega\\
			&=& \int_M \mathcal{R}(H,\alpha)(y)\,\Omega - \mathcal{R}(H,\alpha)(z).
		\end{eqnarray*}
		The result follows.
	\end{proof}
	
	\begin{corollary}\label{cor2}
		Let $(\Phi_i)_i$ be a sequence in $\mathcal{P}\mathrm{Diff}^\infty_\Omega(M)$ and $\alpha \in \mathcal{Z}^{1}(M)$. If $\Phi_i \xrightarrow{C^0} H$, then the integral $\int_M \mathcal{R}(H,\alpha)\,\Omega$ is independent of the choice of any representative element in the de Rham cohomology class $[\alpha]$.
	\end{corollary}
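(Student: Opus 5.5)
The plan is to compare the smooth flux functions for two cohomologous forms before passing to the limit, and then to use that each $\phi_i^1$ preserves volume. Let $\alpha' = \alpha + df$ with $f\in C^\infty(M,\mathbb{R})$; it suffices to show $\int_M \mathcal{R}(H,\alpha')\,\Omega = \int_M \mathcal{R}(H,\alpha)\,\Omega$. Fix a sequence $\Phi_i=\{\phi_i^t\}_t$ in $\mathcal{P}\mathrm{Diff}^\infty_\Omega(M)$ with $\Phi_i\to H$ in $\bar d$. This sequence is Cauchy, so Proposition~\ref{pro1} applies to both $\alpha$ and $\alpha'$.

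\textbf{Step 1 (smooth identity).} Using the orbit interpretation $\mathcal{F}_{\alpha}(\Phi)(1)(x)=\int_{\mathcal{O}_x^{\Phi}}\alpha$, which is linear in $\alpha$, we get
\[
\mathcal{F}_{\alpha'}(\Phi_i)(1)(x) = \mathcal{F}_{\alpha}(\Phi_i)(1)(x) + \int_{\mathcal{O}_x^{\Phi_i}} df = \mathcal{F}_{\alpha}(\Phi_i)(1)(x) + f(\phi_i^1(x)) - f(x),
\]
since the orbit $t\mapsto\phi_i^t(x)$ runs from $x$ to $\phi_i^1(x)$. Equivalently, $\int_0^1(\phi_i^s)^*(\iota_{\dot\phi_i^s}df)\,ds=\int_0^1\frac{d}{ds}(f\circ\phi_i^s)\,ds$.

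\textbf{Step 2 (integrate at the smooth level).} Integrating against $\Omega$ and using $(\phi_i^1)^*\Omega=\Omega$ gives $\int_M f\circ\phi_i^1\,\Omega=\int_M f\,\Omega$. Hence
\[
\int_M \mathcal{F}_{\alpha'}(\Phi_i)(1)\,\Omega=\int_M \mathcal{F}_{\alpha}(\Phi_i)(1)\,\Omega \qquad \text{for every } i .
\]

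\textbf{Step 3 (limit).} By Proposition~\ref{pro1}, $\mathcal{F}_{\alpha}(\Phi_i)(1)\to\mathcal{R}(H,\alpha)$ and $\mathcal{F}_{\alpha'}(\Phi_i)(1)\to\mathcal{R}(H,\alpha')$ uniformly on $M$. Since $\int_M\Omega=1$, uniform convergence lets us pass to the limit in the integrals of Step 2, which gives the claim.

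As a by-product, the identity of Step 1 passes to the limit as $\mathcal{R}(H,\alpha+df)=\mathcal{R}(H,\alpha)+f\circ h-f$, using uniform convergence of $f\circ\phi_i^1$ to $f\circ h$. This yields the stronger conclusion that $\chi(h,\alpha)$ changes exactly by the coboundary $f\circ h - f$. The mean of that term vanishes because $h$ preserves the measure induced by $\Omega$, which is again a weak limit of the invariant measures of the $\phi_i^1$.

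There is no real obstacle here. The only point needing care is Step 3: the interchange of limit and integral. It should be justified by uniform convergence from Proposition~\ref{pro1} rather than pointwise convergence, so that no a priori regularity of $h$ is needed.
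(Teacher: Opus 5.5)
Your proof is correct and follows the paper's argument. You establish $\int_M \mathcal{F}_{\alpha+df}(\Phi_i)(1)\,\Omega=\int_M \mathcal{F}_{\alpha}(\Phi_i)(1)\,\Omega$ for each $i$ and pass to the limit using the uniform convergence from Proposition~\ref{pro1}. The only difference is at the smooth level: the paper obtains this identity by a Stokes' theorem computation on the closed manifold, while you make it explicit via $\mathcal{F}_{df}(\Phi_i)(1)=f\circ\phi_i^1-f$ and $(\phi_i^1)^*\Omega=\Omega$, which amounts to the same thing.
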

	
	\begin{proof}
		Let $\beta \in [\alpha]$, i.e., $\alpha-\beta = df$ for some smooth function $f:M\to\mathbb{R}$. A direct computation using Stokes' theorem shows that for each $i$,
		\[
		\int_M \mathcal{F}_\alpha(\Phi_i)(1)\,\Omega = \int_M \mathcal{F}_\beta(\Phi_i)(1)\,\Omega,
		\]
		because $\partial M = \emptyset$. Since the sequence $\mathcal{F}_\alpha(\Phi_i)(1)$ converges uniformly (by Proposition \ref{pro1}), the dominated convergence theorem applies, and taking the limit as $i \to \infty$ yields the result.
	\end{proof}
	
	\begin{corollary}\label{cor3}
		Let $\alpha$ be a closed $1$-form. If $h_1, h_2 \in \mathbb{G}^{\Omega}(M)$, then
		\[
		\chi(h_1\circ h_2,\alpha) = \chi(h_2,\alpha) + \chi(h_1,\alpha)\circ h_2.
		\]
	\end{corollary}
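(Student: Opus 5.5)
We will derive the cocycle identity for $\chi$ from the product formula for $\mathcal{R}$ in Corollary~\ref{cor:product}, using the explicit expression for $\chi$ in Lemma~\ref{lem:chi_formula}.

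Fix topological isotopies $H_1, H_2 \in \mathcal{P}_\ast\mathbb{G}^{\Omega}(M)$ ending at $h_1$ and $h_2$. Choose sequences $\Phi_i \to H_1$ and $\Psi_i \to H_2$ in $\mathcal{P}\mathrm{Diff}^\infty_\Omega(M)$. The pointwise compositions $\Phi_i\circ\Psi_i$ are smooth volume-preserving isotopies. They converge in $\bar d$ to $H_1\circ H_2$, because composition is uniformly continuous on $\Homeo(M)$ with respect to $d_0$ and the inverses are controlled by the definition of $d_0$. Hence $H_1\circ H_2$ is a topological isotopy ending at $h_1\circ h_2$, so $h_1\circ h_2\in\mathbb{G}^\Omega(M)$ and $\chi(h_1\circ h_2,\alpha)$ is defined.

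Write $R_1=\mathcal{R}(H_1,\alpha)$, $R_2=\mathcal{R}(H_2,\alpha)$ and $R_{12}=\mathcal{R}(H_1\circ H_2,\alpha)$. Corollary~\ref{cor:product} gives $R_{12}=R_2+R_1\circ h_2$. Lemma~\ref{lem:chi_formula} gives $\chi(h,\alpha)=-(R-\int_M R\,\Omega)$ for any topological isotopy ending at $h$, so
\[
\chi(h_1\circ h_2,\alpha) = -\Bigl(R_2 + R_1\circ h_2 - \int_M R_2\,\Omega - \int_M R_1\circ h_2\,\Omega\Bigr).
\]
Since $h_2$ is volume-preserving, being a uniform limit of volume-preserving diffeomorphisms and hence measure-preserving by weak-$*$ convergence of push-forward measures, we have $\int_M R_1\circ h_2\,\Omega=\int_M R_1\,\Omega$. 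The right-hand side therefore splits as $-(R_2-\int_M R_2\,\Omega)-(R_1-\int_M R_1\,\Omega)\circ h_2$, which is $\chi(h_2,\alpha)+\chi(h_1,\alpha)\circ h_2$. Composing with $h_2$ commutes with subtracting a constant, so this last step is immediate.

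The main obstacle is well-definedness. Lemma~\ref{lem:chi_formula} claims independence of the chosen isotopy, but $\mathcal{R}(H,\alpha)$ genuinely depends on the homotopy class of $H$. Two isotopies ending at the same $h$ can differ by a loop, and their $\mathcal{R}$'s then differ by a function of the form $\int_{\mathcal{O}_x}\alpha$ for a loop. I would argue via \eqref{Eq:intgg} and Proposition~\ref{pro2} that $I_\alpha(h,x,y)$, and hence $\chi(h,\alpha)=\int_M I_\alpha(h,x,\cdot)\,\Omega$, depends only on $h$. The reason is that Proposition~\ref{pro2} bounds the difference of the limiting integrals by $\|\alpha\|_\infty$ times the distance between the time-one maps, which tends to zero.

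A loop changes $\mathcal{R}$ only by a function whose differential integrates to zero along every path $\gamma$. Such a function is constant, and the constant is killed by the mean normalization. Consequently, in the computation above I may use the particular isotopy $H_1\circ H_2$ for $h_1\circ h_2$ without loss of generality. That is exactly what makes Corollary~\ref{cor:product} applicable.
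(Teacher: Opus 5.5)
Your proposal is correct and follows the paper's route: the paper also obtains the identity directly from the product formula of Corollary~\ref{cor:product} combined with the normalized expression for $\chi$ in Lemma~\ref{lem:chi_formula}. You additionally make explicit three points the paper leaves implicit: that $\Phi_i\circ\Psi_i\to H_1\circ H_2$, that $\int_M \mathcal{R}(H_1,\alpha)\circ h_2\,\Omega=\int_M\mathcal{R}(H_1,\alpha)\,\Omega$ because $h_2$ preserves $\Omega$, and that $\chi(h,\alpha)$ does not depend on the chosen isotopy, since changing the isotopy alters $\mathcal{R}$ only by a constant. (A small correction on the first point: composition is jointly continuous rather than uniformly continuous, and the uniform-in-$t$ convergence comes from equicontinuity of $\{h_1^t\}_t$ and $\{(h_2^t)^{-1}\}_t$ on the compact set $[0,1]\times M$.)
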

	
	\begin{proof}
		This follows from the composition formula (Corollary \ref{cor:product}) and Lemma \ref{lem:chi_formula}.
	\end{proof}
	
	\subsection{Relation with the classical smooth Flux}
	
	As a corollary of Lemma \ref{lem:density_smooth_isotopies}, we have:
	\begin{corollary}\label{cor:desity}
		If $\dim M \neq 4$, then 
		
		\begin{itemize}
			\item $\mathcal{P}_\ast\mathbb{G}^{\Omega}(M):= \overline{\mathcal{P}\mathrm{Diff}^\infty_\Omega(M)}^{C^0} = \mathcal{P}\Homeo^\Omega_0(M).$
			\item $ \mathbb{G}^{\Omega}(M) = \operatorname{ev}_1\left(\mathcal{P}_\ast\mathbb{G}^{\Omega}(M) \right) = \Homeo^\Omega_0(M)$. 
		\end{itemize}
	\end{corollary}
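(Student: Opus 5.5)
The plan is to derive both items of Corollary~\ref{cor:desity} from Lemma~\ref{lem:density_smooth_isotopies}, together with a closedness argument in the complete metric space $(\mathcal{P}(\Homeo(M),\mathrm{id}_M),\bar d)$.

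\textbf{First item.} We prove two inclusions.

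For $\overline{\mathcal{P}\mathrm{Diff}^\infty_\Omega(M)}^{C^0}\subseteq \mathcal{P}\Homeo^\Omega_0(M)$, take a $\bar d$-limit $H=\{h^t\}$ of smooth volume-preserving isotopies $\Phi_i$. Each $h^t$ is a uniform limit of homeomorphisms whose inverses also converge, because $d_0$ controls inverses. Hence each $h^t$ is a homeomorphism, and $t\mapsto h^t$ is continuous as a uniform limit of continuous paths.

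Volume preservation passes to the limit as well. For every continuous $f$ we have $\int f\circ\phi_i^t\,\Omega=\int f\,\Omega$, and uniform convergence gives the same identity for $h^t$. Thus $(h^t)_*\Omega=\Omega$. Since $h^0=\mathrm{id}$ and $H$ is a path, every $h^t$ lies in the identity component $\Homeo^\Omega_0(M)$.

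The reverse inclusion $\mathcal{P}\Homeo^\Omega_0(M)\subseteq\overline{\mathcal{P}\mathrm{Diff}^\infty_\Omega(M)}^{C^0}$ is exactly the density statement of Lemma~\ref{lem:density_smooth_isotopies}, which is where $n\neq 4$ is used.

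By definition, $\mathcal{P}_\ast\mathbb{G}^\Omega(M)$ is the set of $\bar d$-limits of sequences in $\mathcal{P}\mathrm{Diff}^\infty_\Omega(M)$, which in a metric space is the closure. This gives the first item.

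\textbf{Second item.} The equality $\mathbb{G}^\Omega(M)=\operatorname{ev}_1(\mathcal{P}_\ast\mathbb{G}^\Omega(M))$ is \eqref{eq:Gev1}. It holds because a convergent sequence is Cauchy, and conversely a Cauchy sequence converges by completeness. It remains to show $\operatorname{ev}_1(\mathcal{P}\Homeo^\Omega_0(M))=\Homeo^\Omega_0(M)$.

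The inclusion $\subseteq$ follows from the first item, since endpoints of paths starting at the identity lie in the identity component. For $\supseteq$, we need every $h$ in the identity component to be joined to $\mathrm{id}$ by a continuous path in $\Homeo(M,\Omega)$. This is the main obstacle: the identity component need not coincide with the path component in general.

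To handle it, I would invoke local contractibility (local path-connectedness) of $\Homeo(M,\Omega)$ in the $C^0$ topology, as in Fathi's work. In a locally path-connected group, the identity component equals the path component of the identity.

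If one prefers to avoid that citation, there is an alternative. Theorem~\ref{Mull} gives $h$ as a $C^0$-limit of $\phi_k\in G_\Omega(M)$. Choosing $k$ large makes $\phi_k^{-1}h$ $C^0$-close to $\mathrm{id}$, and a path from $\mathrm{id}$ to $\phi_k^{-1}h$ inside $\Homeo(M,\Omega)$ again needs local path-connectedness near the identity. Concatenating that path with a smooth isotopy to $\phi_k$ then gives a continuous path from $\mathrm{id}$ to $h$. So in either route, I would rely on the local contractibility already alluded to in the paper's introduction.
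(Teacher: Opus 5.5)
Your proposal is correct and follows essentially the paper's route. The paper states this corollary as an immediate consequence of Lemma~\ref{lem:density_smooth_isotopies} together with \eqref{eq:Gev1}, without writing out a proof. Your additional steps are the ones the paper leaves implicit: $\bar d$-limits of smooth volume-preserving isotopies stay in $\mathcal{P}\Homeo^\Omega_0(M)$, and Fathi's local contractibility, which the paper itself invokes elsewhere, identifies the identity component with the path component, so that $\operatorname{ev}_1$ is onto $\Homeo^\Omega_0(M)$.
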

	For instance, each isotopy $H \in \mathcal{P}_\ast\mathbb{G}^{\Omega}(M)$ induces an element $T_H \in \Hom(H^{1}(M,\mathbb{R}),\mathbb{R})$ defined as
	\[
	T_H([\alpha]) \coloneqq \int_M \mathcal{R}(H,\alpha)\,\Omega.
	\]
	Thus, by the Poincaré duality theorem we have a group homomorphism
	\[
	\widetilde{L}_{\Omega}: \mathcal{P}_\ast\mathbb{G}^{\Omega}(M) \longrightarrow H^{(n-1)}(M,\mathbb{R})
	\]
	such that $T_H([\alpha]) = \langle [\alpha], \widetilde{L}_{\Omega}(H) \rangle$, where $\langle \cdot,\cdot \rangle$ is the usual Poincaré pairing. This motivates the following factorization result which generalizes Proposition 2.4 of \cite{T19}.
	
	\begin{proposition}
		Let $\alpha$ be a closed $1$-form and let $H \in \mathcal{P}_\ast\mathbb{G}^{\Omega}(M)$. Then
		\[
		T_H([\alpha]) = \lim_{i\to\infty} \langle [\alpha], \widetilde{S}_{\Omega}(\Phi_i) \rangle,
		\]
		for any sequence $(\Phi_i)_i$ in $\mathcal{P}\mathrm{Diff}^\infty_\Omega(M)$ which converges to $H$ in the $C^0$-metric, where $\widetilde{S}_{\Omega}$ stands for the usual flux homomorphism for elements of $\mathcal{P}\mathrm{Diff}^\infty_\Omega(M)$.
	\end{proposition}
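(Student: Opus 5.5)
Since $T_H([\alpha])$ is defined as $\int_M \mathcal{R}(H,\alpha)\,\Omega$, the plan is to prove the smooth identity
\[
\int_M \mathcal{F}_\alpha(\Phi)(1)\,\Omega = \langle [\alpha], \widetilde{S}_\Omega(\Phi)\rangle
\qquad\text{for every } \Phi\in\mathcal{P}\mathrm{Diff}^\infty_\Omega(M),
\]
and then pass to the limit along an approximating sequence $\Phi_i\to H$. The limit passage is routine. By Proposition~\ref{pro1}, $\mathcal{F}_\alpha(\Phi_i)(1)\to\mathcal{R}(H,\alpha)$ uniformly on $M$, and $M$ has finite volume, so the integrals converge. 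The limit does not depend on the chosen sequence: if $\Phi_i$ and $\Psi_i$ both converge to $H$, interleave them into a single Cauchy sequence and apply Proposition~\ref{pro1} again, or invoke Lemma~\ref{lem:key_estimate} directly. Consequently $\lim_i\langle[\alpha],\widetilde S_\Omega(\Phi_i)\rangle$ exists and equals $T_H([\alpha])$.

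For the smooth identity, write $X_s=\dot\phi^s$. Then
\[
\int_M \mathcal{F}_\alpha(\Phi)(1)\,\Omega=\int_0^1\!\!\int_M (\phi^s)^*(\iota_{X_s}\alpha)\,\Omega\,ds .
\]
Because each $\phi^s$ preserves $\Omega$, the change of variables $(\phi^s)^*\Omega=\Omega$ turns the inner integral into $\int_M \alpha(X_s)\,\Omega$. The pointwise identity $\alpha(X)\,\Omega=\alpha\wedge\iota_X\Omega$ holds for any $1$-form $\alpha$ and vector field $X$; it follows by contracting the vanishing $(n+1)$-form $\alpha\wedge\Omega$ with $X$. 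Using it,
\[
\int_M\mathcal{F}_\alpha(\Phi)(1)\,\Omega=\int_M\alpha\wedge\Big(\int_0^1\iota_{X_s}\Omega\,ds\Big).
\]
The form $\int_0^1\iota_{X_s}\Omega\,ds$ is closed, since each $\iota_{X_s}\Omega$ is closed, and its class is by definition $\widetilde S_\Omega(\Phi)$. The right-hand side is therefore exactly the Poincaré pairing $\langle[\alpha],\widetilde S_\Omega(\Phi)\rangle$, and it depends only on the classes involved.

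The main obstacle is making sure the conventions agree. First, the definition of $\mathcal{F}_\alpha$ uses $(\phi^s)^*$, so the change of variables must be justified by measure preservation of $\phi^s$. Second, the orientation and sign of the Poincaré pairing, $\alpha\wedge\eta$ versus $\eta\wedge\alpha$, must match the convention used to define $\widetilde L_\Omega$. If the conventions differ, a factor $(-1)^{n-1}$ appears, and I would absorb it into the definition of the pairing so that the statement holds as written. Independence from the representative of $[\alpha]$ is already covered by Corollary~\ref{cor2}, and is also visible directly from the formula above, since $\int_M df\wedge\eta=0$ for closed $\eta$ by Stokes.
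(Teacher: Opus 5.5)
Your proposal is correct and follows the same route as the paper. Both arguments pass to the limit using the uniform convergence from Proposition~\ref{pro1}, combined with the smooth identity $\int_M \mathcal{F}_\alpha(\Phi)(1)\,\Omega = \langle [\alpha], \widetilde{S}_\Omega(\Phi)\rangle$. The differences are minor: you prove this identity directly via $(\phi^s)^*\Omega=\Omega$ and $\alpha(X)\,\Omega=\alpha\wedge\iota_X\Omega$, where the paper cites Proposition~2.4 of \cite{T19}, and your interleaving argument makes explicit the independence from the approximating sequence, which the paper leaves implicit.
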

	
	\begin{proof}
		If $(\Phi_i)_i$ converges to $H$ in the $C^0$-metric, then by the uniform convergence established in Proposition \ref{pro1}, the dominated convergence theorem gives $\lim_{i\to\infty}\int_M \mathcal{F}_{\alpha}(\Phi_i)(1)\,\Omega = \int_M \mathcal{R}(H,\alpha)\,\Omega$. On the other hand, Proposition 2.4 of \cite{T19} gives $\int_M \mathcal{F}_{\alpha}(\Phi_i)(1)\,\Omega = \langle [\alpha], \widetilde{S}_{\Omega}(\Phi_i) \rangle$ for each $i$. Taking the limit yields the result.
	\end{proof}
	
	\begin{lemma}\label{lem:homotopy_invariance}
		Assume that $H, H' \in \mathcal{P}_\ast\mathbb{G}^{\Omega}(M)$ are homotopic relative to fixed endpoints. Then $\mathcal{R}(H,\alpha) = \mathcal{R}(H',\alpha)$ for each closed $1$-form $\alpha$.
	\end{lemma}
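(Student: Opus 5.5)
The plan is to reduce the statement to the smooth case, where invariance under homotopy rel endpoints holds, and then pass to the limit using Lemma~\ref{lem:key_estimate}. Let $G:[0,1]\times[0,1]\to\Homeo(M)$, $(s,t)\mapsto G_s(t)$, be a homotopy rel endpoints with $G_0=H$ and $G_1=H'$, through paths in $\mathcal{P}_\ast\mathbb{G}^{\Omega}(M)$. Since $[0,1]$ is compact, $s\mapsto G_s$ is uniformly continuous for $\bar d$. We therefore choose a partition $0=s_0<s_1<\dots<s_N=1$ such that $\bar d(G_{s_k},G_{s_{k+1}})<r(g)/8$ for every $k$. It then suffices to prove $\mathcal{R}(G_{s_k},\alpha)=\mathcal{R}(G_{s_{k+1}},\alpha)$, which reduces everything to two $C^0$-close topological isotopies $K,K'$ with the same endpoint $h$.

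For such a pair, pick smooth approximants $\Phi_i\to K$ and $\Psi_i\to K'$ in $\mathcal{P}\mathrm{Diff}^\infty_\Omega(M)$. These are available by the definition of $\mathcal{P}_\ast\mathbb{G}^{\Omega}(M)$, or by Lemma~\ref{lem:density_smooth_isotopies} when $n\neq4$. For large $i$ we have $\bar d(\Phi_i,\Psi_i)<r(g)/4$. The endpoints $\phi_i^1$ and $\psi_i^1$ need not agree, but they converge to the same $h$. Let $\theta_i:=\psi_i^1\circ(\phi_i^1)^{-1}\in G_\Omega(M)$, which tends to $\mathrm{id}$ in $C^0$. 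The idea is to modify $\Phi_i$ so that it ends exactly at $\psi_i^1$: set $\tilde\Phi_i:=\Theta_i\ast_l\Phi_i$, where $\Theta_i$ is a smooth volume-preserving isotopy from $\mathrm{id}$ to $\theta_i$ that is $C^0$-small.

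We would obtain $\Theta_i$ from the local contractibility and local connectedness by smooth arcs of $G_\Omega(M)$ recalled in the Remark on well-known facts. A flux correction via Lemma~\ref{lem:local_section} is needed only if that local arc carries flux, and even then Lemma~\ref{lem:key_estimate} controls the error. By the concatenation formula, $\mathcal{F}_\alpha(\tilde\Phi_i)(1)=\mathcal{F}_\alpha(\Phi_i)(1)+\mathcal{F}_\alpha(\Theta_i)(1)\circ\phi_i^1$. The last term is bounded by $\|\alpha\|_\infty\,\bar d(\Theta_i,\mathrm{id})\to0$, again by Lemma~\ref{lem:key_estimate} applied with $\Psi=\mathrm{id}$. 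Hence $\mathcal{F}_\alpha(\tilde\Phi_i)(1)\to\mathcal{R}(K,\alpha)$ uniformly.

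Now $\tilde\Phi_i$ and $\Psi_i$ are smooth volume-preserving isotopies with a common endpoint, and they are uniformly $C^0$-close. We would like the two to be homotopic rel endpoints inside $G_\Omega(M)$, and we would obtain this by a small geodesic-type interpolation. Whether or not they are homotopic, Lemma~\ref{lem:key_estimate} applies directly, since $\bar d(\tilde\Phi_i,\Psi_i)\le r(g)/2$ for large $i$. It gives
\[
\sup_z\bigl|\mathcal{F}_\alpha(\tilde\Phi_i)(1)(z)-\mathcal{F}_\alpha(\Psi_i)(1)(z)\bigr|\le\|\alpha\|_\infty\,\bar d(\tilde\Phi_i,\Psi_i).
\]
This alone only bounds the difference of the limits by $\|\alpha\|_\infty\,\bar d(K,K')$; it does not make it vanish. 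The main obstacle is to upgrade this bound to equality.

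To get equality, we would use the smooth fact that for two isotopies with the same endpoints, $\mathcal{F}_\alpha(\tilde\Phi_i)(1)-\mathcal{F}_\alpha(\Psi_i)(1)$ is, at each $x$, the integral of $\alpha$ over the closed loop formed by the two orbits of $x$. Closeness below the injectivity radius makes this loop null-homotopic, since it is homotopic through short geodesic segments. As $\alpha$ is closed, the integral is then exactly $0$, so the two fluxes coincide for each large $i$. Passing to the limit by Proposition~\ref{pro1} yields $\mathcal{R}(K,\alpha)=\mathcal{R}(K',\alpha)$. Chaining over the partition gives $\mathcal{R}(H,\alpha)=\mathcal{R}(H',\alpha)$.
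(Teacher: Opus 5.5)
There is a genuine gap, and it sits in the correction isotopy $\Theta_i$. You only know that $\theta_i=\psi_i^1\circ(\phi_i^1)^{-1}\to\mathrm{id}_M$ in the $C^0$-sense. Local connectedness of $G_\Omega(M)$ by smooth arcs is a statement about $C^\infty$-neighbourhoods of the identity. It therefore gives no smooth volume-preserving isotopy from $\mathrm{id}_M$ to $\theta_i$ whose stages all stay $C^0$-small. Your appeal to Lemma~\ref{lem:local_section} and Lemma~\ref{lem:key_estimate} does not produce one either: producing a $C^0$-small \emph{smooth} arc is a real $C^0$-local-contractibility problem for $G_\Omega(M)$.

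Even granting $\Theta_i$, your claim that $\tilde\Phi_i=\Theta_i\ast_l\Phi_i$ is uniformly close to $\Psi_i$ is false. The left concatenation runs all of $\Phi_i$ on $[0,1/2]$, so $\tilde\Phi_i(1/2)=\phi_i^1$ while $\Psi_i(1/2)=\psi_i^{1/2}$. Consequently neither Lemma~\ref{lem:key_estimate} nor your geodesic null-homotopy applies to that pair; you would need the pointwise product $\Theta_i\circ\Phi_i$ instead.

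Finally, your partition step presupposes that every intermediate path $G_{s_k}$ lies in $\mathcal{P}_\ast\mathbb{G}^{\Omega}(M)$, so that $\mathcal{R}(G_{s_k},\alpha)$ is defined. The hypothesis only gives a homotopy rel endpoints; in the paper the homotopy $\Sigma$ takes values in $\Homeo(M)$. Even for a homotopy inside $\Homeo^\Omega_0(M)$ you would need Corollary~\ref{cor:desity}, and hence $n\neq 4$.

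The missing idea is that you never need to match endpoints by an isotopy: close the loop at $t=1$ pointwise with a short geodesic. Suppose $\bar d(\Phi_i,\Psi_i)<r(g)$. For each $x$, the minimal geodesics from $\phi_i^t(x)$ to $\psi_i^t(x)$ sweep out a square. This square shows that the loop $\mathcal{O}_x^{\Phi_i}$, followed by the minimal geodesic $\gamma_{i,x}$ from $\phi_i^1(x)$ to $\psi_i^1(x)$, followed by $-\mathcal{O}_x^{\Psi_i}$, is null-homotopic. Hence $|\mathcal{F}_\alpha(\Phi_i)(1)(x)-\mathcal{F}_\alpha(\Psi_i)(1)(x)|=|\int_{\gamma_{i,x}}\alpha|\le\|\alpha\|_\infty\,d_g(\phi_i^1(x),\psi_i^1(x))$. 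This tends to $0$ because both endpoints converge to $h(x)$. This endpoint-only form of Lemma~\ref{lem:key_estimate} is exactly what upgrades your bound $\|\alpha\|_\infty\,\bar d(K,K')$ to equality. With it, and with $\Theta_i$ deleted, your chaining argument works whenever the intermediate paths lie in $\mathcal{P}_\ast\mathbb{G}^{\Omega}(M)$.

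The paper avoids the partition altogether. For each $x$ it glues three pieces: the geodesic cylinder between $\mathcal{O}_x^{\Phi_i}$ and $\mathcal{O}_x^{H}$, the square $\Sigma(s,t)(x)$, and the geodesic cylinder between $\mathcal{O}_x^{H'}$ and $\mathcal{O}_x^{\Psi_i}$. Only the two geodesic segments at $t=1$ survive, giving the bound $\|\alpha\|_\infty(\bar d(H,\Phi_i)+\bar d(H',\Psi_i))\to 0$ in one step. The homotopy enters only through the orbit paths $s\mapsto\Sigma(s,\cdot)(x)$, so no intermediate path has to be approximated by smooth volume-preserving isotopies.
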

	
	\begin{proof}
		Let $\Sigma : [0,1] \times [0,1] \longrightarrow \Homeo(M)$ be the homotopy between $H$ and $H'$ such that
		\[
		\Sigma(0,t) = h_t,\qquad \Sigma(1,t) = h'_t,
		\]
		and with fixed endpoints:
		\[
		\Sigma(s,0) = \mathrm{id}_M,\qquad \Sigma(s,1) = h,
		\]
		for all $s\in[0,1]$, where $h = H(1) = H'(1)$. By definition of $\mathcal{P}_\ast\mathbb{G}^{\Omega}(M)$, there exist sequences of smooth volume-preserving isotopies $\Phi_i = \{\phi_t^i\}_t$ and $\Psi_i = \{\psi_t^i\}_t$ such that
		\[
		H = \lim_{C^0} \Phi_i, \qquad H' = \lim_{C^0} \Psi_i.
		\]
		For $i$ sufficiently large, we may assume
		\[
		\bar{d}(H,\Phi_i) < \frac{r(g)}{2}, \qquad \bar{d}(H',\Psi_i) < \frac{r(g)}{2},
		\]
		where $r(g)$ is the injectivity radius of $(M,g)$. Hence, for each $x\in M$ and $t\in[0,1]$, the points $\phi_t^i(x)$ and $h_t(x)$ (resp. $\psi_t^i(x)$ and $h'_t(x)$) are connected by a unique minimal geodesic $\lambda_{i,x}^{s,t}$ (resp. $\mu_{i,x}^{s,t}$). For each fixed $x\in M$, consider the 2-chain formed by:
		\begin{itemize}
			\item the cylinder $A_1$ traced by the geodesics $\lambda_{i,x}^{s,t}$ connecting $\mathcal{O}_x^{\Phi_i}$ to $\mathcal{O}_x^{H}$;
			\item the homotopy square $\square_{H,H'}^x = \{ \Sigma(s,t)(x) : (s,t)\in[0,1]^2 \}$;
			\item the cylinder $A_2$ traced by the geodesics $\mu_{i,x}^{s,t}$ connecting $\mathcal{O}_x^{H'}$ to $\mathcal{O}_x^{\Psi_i}$.
		\end{itemize}
		Because $H$ and $H'$ have the same endpoints, this union is a closed 2-chain. Its boundary is the piecewise smooth closed curve
		\[
		\mathcal{O}_x^{\Phi_i} \;\cup\; \lambda_{i,x}^{s,1} \;\cup\; \mathcal{O}_x^{\Psi_i} \;\cup\; \mu_{i,x}^{s,1}.
		\]
		Applying Stokes' theorem to any closed $1$-form $\alpha$ gives
		\[
		\int_{\mathcal{O}_x^{\Phi_i}} \alpha + \int_{\lambda_{i,x}^{s,1}} \alpha
		=
		\int_{\mathcal{O}_x^{\Psi_i}} \alpha + \int_{\mu_{i,x}^{s,1}} \alpha.
		\]
		Thus, 
		\[
		\begin{aligned}
			\left| \mathcal{F}_\alpha(\Phi_i)(1)(x) - \mathcal{F}_\alpha(\Psi_i)(1)(x) \right|
			&= \left| \int_{\mathcal{O}_x^{\Phi_i}} \alpha - \int_{\mathcal{O}_x^{\Psi_i}} \alpha \right| \\
			&= \left| \int_{\lambda_{i,x}^{s,1}} \alpha - \int_{\mu_{i,x}^{s,1}} \alpha \right| \\
			&\le \|\alpha\|_\infty \left( \operatorname{length}(\lambda_{i,x}^{s,1}) + \operatorname{length}(\mu_{i,x}^{s,1}) \right) \\
			&\le \|\alpha\|_\infty \left( \bar{d}(H,\Phi_i) + \bar{d}(H',\Psi_i) \right),
		\end{aligned}
		\]
		where the last inequality follows since the geodesic lengths are bounded by the $C^0$-distance between the curves. Taking the supremum over $x\in M$ and then the limit as $i\to\infty$, we obtain $
		\mathcal{R}(H,\alpha) = \mathcal{R}(H',\alpha),$ 
		for every closed $1$-form $\alpha$.
	\end{proof}
	
	\begin{proposition}\label{pro:dualty}
		The map $\widetilde{L}_{\Omega}$ is the Poincaré dual of Fathi's mass flow.
	\end{proposition}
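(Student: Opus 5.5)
We need to show that, under Poincaré duality $H^{n-1}(M;\mathbb{R})\cong H_1(M;\mathbb{R})$, the homomorphism $\widetilde{L}_\Omega$ agrees with Fathi's mass flow $\tilde{\mathfrak{F}}$ on $\mathcal{P}_\ast\mathbb{G}^{\Omega}(M)$. The plan is to prove this first for smooth isotopies and then pass to $C^0$-limits, using continuity of both sides.

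\emph{Step 1: the smooth case.} For $\Phi\in\mathcal{P}\mathrm{Diff}^\infty_\Omega(M)$, the last Proposition gives
$$T_\Phi([\alpha])=\int_M\mathcal{F}_\alpha(\Phi)(1)\,\Omega=\langle[\alpha],\widetilde{S}_\Omega(\Phi)\rangle,$$
so $\widetilde{L}_\Omega(\Phi)=\widetilde{S}_\Omega(\Phi)$. Fathi's mass flow of a measure-preserving isotopy is defined by the evaluation
$$\langle[\alpha],\tilde{\mathfrak F}(\Phi)\rangle=\int_M \tilde f_\alpha\,d\mu ,$$
where $\alpha$ corresponds to a class in $H^1(M;\mathbb{Z})$ represented by a map $f:M\to S^1$, and $\tilde f_\alpha$ is the lift of $f\circ\phi^t-f$ along the isotopy. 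For a smooth closed $1$-form $\alpha=f^*d\theta$, this lift is exactly $\int_{\mathcal{O}^\Phi_x}\alpha=\mathcal{F}_\alpha(\Phi)(1)(x)$. This gives $\langle[\alpha],\tilde{\mathfrak F}(\Phi)\rangle=T_\Phi([\alpha])$ for integral classes. Both sides are linear in $[\alpha]$ and integral classes span $H^1(M;\mathbb{R})$, so we get $\mathrm{PD}\circ\widetilde{L}_\Omega=\tilde{\mathfrak F}$ on smooth isotopies. This is the classical fact, cited in the paper after Fathi's theorem, that $\Gamma$ is identified with the smooth flux group.

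\emph{Step 2: passage to the limit.} Take $H\in\mathcal{P}_\ast\mathbb{G}^{\Omega}(M)$ and $\Phi_i\to H$ in $\bar d$. By the last Proposition, $T_H([\alpha])=\lim_i\langle[\alpha],\widetilde S_\Omega(\Phi_i)\rangle$. On Fathi's side, the lifts $\tilde f_\alpha^{\Phi_i}$ converge uniformly to the corresponding lift $\tilde f_\alpha^{H}$ for the limit isotopy. The reason is that the lift of a uniformly convergent family of maps into $S^1$, normalized at $t=0$, converges uniformly once the $\Phi_i$ are $\bar d$-close to $H$; this is a covering-space argument with a Lipschitz bound, analogous to Lemma~\ref{lem:key_estimate}. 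Hence $\tilde{\mathfrak F}(\Phi_i)\to\tilde{\mathfrak F}(H)$. In fact $\tilde f^{H}_\alpha=\mathcal{R}(H,\alpha)$, since both are uniform limits of the same functions $\mathcal{F}_\alpha(\Phi_i)(1)$ by Proposition~\ref{pro1}. Integrating against $\Omega$ and using Step 1 yields $\langle[\alpha],\tilde{\mathfrak F}(H)\rangle=T_H([\alpha])=\langle[\alpha],\widetilde L_\Omega(H)\rangle$ for all $[\alpha]$. By nondegeneracy of the pairing, $\mathrm{PD}(\widetilde L_\Omega(H))=\tilde{\mathfrak F}(H)$. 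Homotopy invariance (Lemma~\ref{lem:homotopy_invariance}) ensures that this descends to the universal-cover level on which Fathi's map is defined.

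\emph{Main obstacle.} The key point is the $C^0$-continuity of Fathi's mass flow on the path space, i.e. the uniform convergence of the lifts $\tilde f_\alpha^{\Phi_i}\to\tilde f_\alpha^{H}$. I would establish it directly. Choose the representative $f:M\to S^1$ to be Lipschitz. Fix $x$; for $i$ large the paths $t\mapsto f(\phi_i^t(x))$ and $t\mapsto f(h_t(x))$ stay within distance less than $1/2$ in $S^1$. Their lifts starting at the same base point therefore differ by at most $\mathrm{Lip}(f)\,\bar d(\Phi_i,H)$, uniformly in $x$. This mirrors the proof of Lemma~\ref{lem:key_estimate}, with geodesics replaced by short arcs in $S^1$.
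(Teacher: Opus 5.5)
Your argument is correct. It follows the same overall skeleton as the paper's proof: identify the two maps on smooth isotopies, then pass to $C^0$-limits. The difference is that you prove directly the two ingredients the paper only cites.

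The paper quotes Fathi's Appendix~A.5 for $\widetilde{S}_\Omega=\mathcal{PD}\circ\tilde{\mathfrak{F}}$ on smooth isotopies. It then argues abstractly that $\widetilde{L}_\Omega$ and $\mathcal{PD}\circ\tilde{\mathfrak{F}}\circ\pi$ are both continuous and smooth isotopies are dense (via Corollary~\ref{cor:desity}), so the two maps agree.

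You instead read the smooth case off Fathi's definition: for $\alpha=f^*d\theta$, the lift of $f\circ\phi^t-f$ at $x$ is exactly $\int_{\mathcal{O}^\Phi_x}\alpha=\mathcal{F}_\alpha(\Phi)(1)(x)$. You then obtain continuity of the mass flow from the Lipschitz path-lifting estimate in $S^1$.

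Your route buys three things:
\begin{itemize}
\item It gives a pointwise identity, $\mathcal{R}(H,\alpha)=\tilde f^{H}_\alpha$ for every integral class and every $H\in\mathcal{P}_\ast\mathbb{G}^{\Omega}(M)$; the Proposition is just the integrated form of this.
\item You use only the approximating sequence that defines membership in $\mathcal{P}_\ast\mathbb{G}^{\Omega}(M)$, so no density theorem is needed. The paper's appeal to Corollary~\ref{cor:desity} brings in the hypothesis $\dim M\neq 4$ unnecessarily.
\item You do not need continuity of $\widetilde{L}_\Omega$ on path space, which the paper asserts in one line.
\end{itemize}
The paper's route is shorter but relies entirely on external results of Fathi.

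Two cosmetic points. With the paper's convention $\mathcal{PD}\colon H_1\to H^{n-1}$, your conclusion should read $\widetilde{L}_\Omega(H)=\mathcal{PD}(\tilde{\mathfrak{F}}(H))$. Also, once you have the path-level identity, the detour through $\widetilde{S}_\Omega$ in Step~1 and the final appeal to Lemma~\ref{lem:homotopy_invariance} are superfluous.
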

	
	\begin{proof}
		Let $\mu$ be the measure induced by $\Omega$. Fathi's mass flow theorem provides a continuous homomorphism $
		\tilde{\mathfrak{F}} : \widetilde{\Homeo}_0(M,\mu) \longrightarrow H_1(M;\mathbb{R}).$ 
		Restricting to $\widetilde{\Homeo}^\Omega_0(M)$, we obtain a continuous homomorphism into $H_1(M;\mathbb{R})$. For smooth isotopies, Fathi proves in Appendix A.5 of \cite{Fathi1980} that $
		\widetilde{S}_\Omega(\Phi) = \mathcal{PD}\bigl(\tilde{\mathfrak{F}}([\Phi])\bigr)$ 
		for every smooth $\Phi \in \mathcal{P}\mathrm{Diff}^\infty_\Omega(M)$, where $ \mathcal{PD}$ denotes the Poincaré duality map. Both maps $\widetilde{L}_{\Omega}$ and $\mathcal{PD}\circ\tilde{\mathfrak{F}}\circ\pi$ (where $\pi$ sends an isotopy to its class in the universal cover) are continuous. By Corollary \ref{cor:desity}, smooth isotopies are dense in $\mathcal{P}_\ast\mathbb{G}^{\Omega}(M)$. Hence, the identity extends to all continuous isotopies: $
		\widetilde{L}_{\Omega}(H) = \mathcal{PD}\bigl(\tilde{\mathfrak{F}}([H])\bigr).$ 
	\end{proof}
	
	\begin{lemma}\label{lem:homotopy_invariance_flux}
		Assume that $H, H' \in \mathcal{P}_\ast\mathbb{G}^{\Omega}(M)$ are homotopic relative to fixed endpoints. Then 
		\[
		\widetilde{L}_{\Omega}(H) = \widetilde{L}_{\Omega}(H').
		\]
	\end{lemma}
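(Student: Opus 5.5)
The statement follows almost directly from Lemma~\ref{lem:homotopy_invariance} together with the way $\widetilde{L}_{\Omega}$ is defined via Poincaré duality. The plan is to show that the linear functionals $T_H$ and $T_{H'}$ on $H^1(M;\mathbb{R})$ coincide. Nondegeneracy of the Poincaré pairing then identifies the classes $\widetilde{L}_{\Omega}(H)$ and $\widetilde{L}_{\Omega}(H')$ in $H^{n-1}(M;\mathbb{R})$.

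\emph{Step 1.} Fix a class $[\alpha]\in H^1(M;\mathbb{R})$ and a representative closed $1$-form $\alpha$. By Corollary~\ref{cor2}, the number
\[
T_H([\alpha])=\int_M \mathcal{R}(H,\alpha)\,\Omega
\]
does not depend on the chosen representative, so it is legitimate to compute $T_H$ and $T_{H'}$ with the same form $\alpha$.

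\emph{Step 2.} Since $H$ and $H'$ are homotopic relative to fixed endpoints, Lemma~\ref{lem:homotopy_invariance} gives the pointwise identity $\mathcal{R}(H,\alpha)=\mathcal{R}(H',\alpha)$ on $M$. Integrating against $\Omega$ yields $T_H([\alpha])=T_{H'}([\alpha])$. Because $[\alpha]$ was arbitrary, $T_H=T_{H'}$ as elements of $\Hom(H^1(M,\mathbb{R}),\mathbb{R})$.

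\emph{Step 3.} By definition, $\widetilde{L}_{\Omega}(H)$ is the unique class satisfying $\langle[\alpha],\widetilde{L}_{\Omega}(H)\rangle=T_H([\alpha])$ for all $[\alpha]$. Uniqueness holds because the pairing $H^1(M;\mathbb{R})\times H^{n-1}(M;\mathbb{R})\to\mathbb{R}$ is nondegenerate on the closed oriented manifold $M$. It follows that
\[
\langle[\alpha],\widetilde{L}_{\Omega}(H)-\widetilde{L}_{\Omega}(H')\rangle=0
\]
for every $[\alpha]$, and hence $\widetilde{L}_{\Omega}(H)=\widetilde{L}_{\Omega}(H')$.

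\emph{Alternative route.} One could instead argue through Proposition~\ref{pro:dualty}. Homotopic paths rel endpoints define the same element of the universal cover, so Fathi's mass flow $\tilde{\mathfrak F}$ takes equal values on them, and $\widetilde{L}_\Omega=\mathcal{PD}\circ\tilde{\mathfrak F}\circ\pi$. This route requires the dimension restriction implicit in Corollary~\ref{cor:desity}, so I prefer the direct argument above.

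\emph{Main obstacle.} The only nontrivial input is Lemma~\ref{lem:homotopy_invariance}, which is already established. The remaining point to check is that $T_H$ is genuinely linear in $[\alpha]$. Linearity holds because $\alpha\mapsto\mathcal{F}_\alpha(\Phi_i)(1)$ is linear and uniform limits preserve linearity. This guarantees that $\widetilde{L}_{\Omega}(H)$ is a well-defined element of $H^{n-1}(M;\mathbb{R})$, so the duality step is justified.
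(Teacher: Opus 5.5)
Your proof is correct, but your main route differs from the paper's. The paper's proof is exactly your ``alternative route'': it quotes Proposition~\ref{pro:dualty} to write $\widetilde{L}_{\Omega}=\mathcal{PD}\circ\tilde{\mathfrak{F}}\circ\pi$, notes that paths homotopic rel endpoints give the same element of the universal cover, and concludes in one line.

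You instead use the pointwise equality of Lemma~\ref{lem:homotopy_invariance}, integrate against $\Omega$, and finish with nondegeneracy of the Poincaré pairing. That pairing is precisely how $\widetilde{L}_{\Omega}$ was defined from $T_H$. Your route buys three things:
\begin{itemize}
\item It stays inside the transport framework. It needs no appeal to Fathi's mass flow, its identification with the smooth flux, or the continuity-extension argument behind Proposition~\ref{pro:dualty}.
\item It proves the stronger statement $\mathcal{R}(H,\alpha)=\mathcal{R}(H',\alpha)$ pointwise before averaging.
\item It is slightly more general. The homotopy $\Sigma$ in Lemma~\ref{lem:homotopy_invariance} only has to take values in $\Homeo(M)$. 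The paper's step ``$[H]=[H']$ in the universal cover'' needs the homotopy to stay inside the measure-preserving group on which $\tilde{\mathfrak{F}}$ is defined.
\end{itemize}
The paper's route buys brevity, and it ties the lemma to the mass-flow identification that is reused in Theorem~\ref{thm:C0rigidity}.

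One correction to your comparison: the dimension restriction is not what separates the two routes. Smooth isotopies are dense in $\mathcal{P}_\ast\mathbb{G}^{\Omega}(M)$ by the very definition of that space. So the continuity extension in Proposition~\ref{pro:dualty} does not need Corollary~\ref{cor:desity}, even though the paper cites it there.

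Also, your Step~1 appeal to Corollary~\ref{cor2} is harmless but unnecessary here, since you evaluate $T_H$ and $T_{H'}$ on the same representative. It matters only for $\widetilde{L}_{\Omega}$ to be well defined in the first place.
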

	
	\begin{proof} 
		From the proof of Proposition \ref{pro:dualty}, if $H$ and $H'$ are homotopic relative to fixed endpoints, then $[H]=[H']$ in the universal cover, so $\tilde{\mathfrak{F}}([H]) = \tilde{\mathfrak{F}}([H'])$. Therefore,
		\[
		\widetilde{L}_{\Omega}(H) = \mathcal{PD}(\tilde{\mathfrak{F}}([H])) = \mathcal{PD}(\tilde{\mathfrak{F}}([H'])) = \widetilde{L}_{\Omega}(H').
		\]
		Hence $\widetilde{L}_{\Omega}$ is homotopy invariant.
	\end{proof}
	
	\begin{lemma}\label{lem:null_homologous}
		Assume that $H, H' \in \mathcal{P}_\ast\mathbb{G}^{\Omega}(M)$ have the same
		endpoints $h = H(1) = H'(1)$.  If there exists a point $z\in M$ such that
		each of the orbits $\mathcal{O}_z^{H}$ and $\mathcal{O}_z^{H'}$ is piecewise
		smooth and together they bound a null-homologous $2$-chain, then
		$\widetilde{L}_{\Omega}(H) = \widetilde{L}_{\Omega}(H')$.
	\end{lemma}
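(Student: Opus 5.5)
The natural route is to reduce the statement to the pointwise rigidity already at our disposal: Lemma~\ref{lem:homotopy_invariance}, the uniform limit in Proposition~\ref{pro1}, and the definition of $\widetilde{L}_{\Omega}$ through $T_H([\alpha])=\int_M\mathcal{R}(H,\alpha)\,\Omega$. By Poincaré duality it suffices to prove that $T_H([\alpha])=T_{H'}([\alpha])$ for every closed $1$-form $\alpha$. By Corollary~\ref{cor2} this does not depend on the representative of $[\alpha]$. The plan has three steps. First, show that the functions $\mathcal{R}(H,\alpha)$ and $\mathcal{R}(H',\alpha)$ differ by a constant. Second, show that this constant vanishes by evaluating at the distinguished point $z$. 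Third, integrate against $\Omega$.

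For the first step we use Proposition~\ref{pro3}. Since $H(1)=H'(1)=h$, Proposition~\ref{pro2} gives, for all $x,y$,
\[
\mathcal{R}(H,\alpha)(y)-\mathcal{R}(H,\alpha)(x)=I_\alpha(h,x,y)=\mathcal{R}(H',\alpha)(y)-\mathcal{R}(H',\alpha)(x).
\]
Here we use that $I_\alpha(h,x,y)$ is computed as the limit of $\int_\gamma((\phi_i^1)^*\alpha-\alpha)$ along any approximating sequence with endpoint converging to $h$. Proposition~\ref{pro2} shows that this limit is the same for approximations of $H$ and of $H'$. Hence $c:=\mathcal{R}(H,\alpha)-\mathcal{R}(H',\alpha)$ is a constant function on $M$.

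The second step, showing $c=0$, is the main obstacle. Pick approximating sequences $\Phi_i\to H$ and $\Psi_i\to H'$ in $\bar d$. We would repeat the geodesic-cylinder construction from the proof of Lemma~\ref{lem:homotopy_invariance}, now at the single point $z$. The cylinder $A_1$ is swept by the short geodesics joining $\mathcal{O}_z^{\Phi_i}$ to $\mathcal{O}_z^{H}$, and $A_2$ is the analogous cylinder for $\Psi_i$ and $H'$. In place of the homotopy square we use the null-homologous $2$-chain $B$ that the hypothesis provides, with $\partial B=\mathcal{O}_z^H-\mathcal{O}_z^{H'}$. The piecewise smoothness of the two orbits makes $\int_{\mathcal{O}_z^H}\alpha$ and $\int_{\mathcal{O}_z^{H'}}\alpha$ meaningful, and Stokes' theorem applies on $B$. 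Since $B$ is null-homologous and $\alpha$ is closed, $\int_{\partial B}\alpha=0$, so $\int_{\mathcal{O}_z^H}\alpha=\int_{\mathcal{O}_z^{H'}}\alpha$.

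Next, apply Stokes' theorem on $A_1$ and $A_2$ as well; they are continuous $2$-chains bounded by piecewise smooth curves together with short geodesic ends. This gives
\[
\bigl|\mathcal{F}_\alpha(\Phi_i)(1)(z)-\mathcal{F}_\alpha(\Psi_i)(1)(z)\bigr|\le\|\alpha\|_\infty\bigl(\bar d(H,\Phi_i)+\bar d(H',\Psi_i)\bigr).
\]
The geodesic ends contribute at most these lengths. The care needed is in applying Stokes' theorem to cylinders that are merely continuous. We would handle this exactly as in Lemma~\ref{lem:homotopy_invariance}, by working with the closed form $\alpha$ via a local primitive on geodesic balls of radius less than $r(g)/2$. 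Letting $i\to\infty$ and using Proposition~\ref{pro1} gives $\mathcal{R}(H,\alpha)(z)=\mathcal{R}(H',\alpha)(z)$, so $c=0$.

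Finally, $\mathcal{R}(H,\alpha)=\mathcal{R}(H',\alpha)$ everywhere. Integrating against $\Omega$ gives $T_H([\alpha])=T_{H'}([\alpha])$ for all $[\alpha]\in H^1(M;\mathbb{R})$. Since the Poincaré pairing is nondegenerate, $\widetilde{L}_{\Omega}(H)=\widetilde{L}_{\Omega}(H')$.
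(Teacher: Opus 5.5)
Your proposal is correct and follows the paper's proof. It has the same three steps: constancy of $\mathcal{R}(H,\alpha)-\mathcal{R}(H',\alpha)$ via Propositions~\ref{pro2}--\ref{pro3}, vanishing of that constant at $z$ by Stokes on the given $2$-chain, then integration against $\Omega$ and Poincaré duality. The only difference is one of detail. The paper simply invokes continuity of line integrals of the closed form $\alpha$ under the uniform convergence $\mathcal{O}_z^{\Phi_i}\to\mathcal{O}_z^{H}$, whereas you make that convergence explicit with the geodesic cylinders from Lemma~\ref{lem:homotopy_invariance}.
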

	
	\begin{proof}
		Let $\alpha$ be any closed $1$-form on $M$.  For an arbitrary point $x\in M$,
		Proposition~\ref{pro3} gives
		\[
		\mathcal{R}(H,\alpha)(x) - \mathcal{R}(H,\alpha)(z)
		= I_\alpha(h,z,x),
		\qquad
		\mathcal{R}(H',\alpha)(x) - \mathcal{R}(H',\alpha)(z)
		= I_\alpha(h,z,x),
		\]
		where $I_\alpha(h,\cdot,\cdot)$ is the well-defined function from
		\eqref{Eq:intgg} that depends only on the endpoint $h$, not on the particular
		isotopy (Proposition~\ref{pro2}).  Subtracting the two equalities yields
		\[
		\bigl(\mathcal{R}(H,\alpha) - \mathcal{R}(H',\alpha)\bigr)(x)
		= \bigl(\mathcal{R}(H,\alpha) - \mathcal{R}(H',\alpha)\bigr)(z)
		\qquad \forall x\in M.
		\]
		Hence the difference $D(x) := \mathcal{R}(H,\alpha)(x) - \mathcal{R}(H',\alpha)(x)$
		is a constant function on $M$. Now evaluate $D$ at the special point $z$.  By definition,
		$\mathcal{R}(H,\alpha)(z) = \lim_{i\to\infty} \int_{\mathcal{O}_z^{\Phi_i}} \alpha$
		and similarly for $H'$, where $(\Phi_i)_i$, $(\Psi_i)_i$ are smooth volume-preserving
		isotopies converging uniformly to $H$ and $H'$, respectively.  Because the
		isotopies converge uniformly, the smooth paths $\phi_i^t(z)$ converge uniformly to
		$h^t(z)$, and the same holds for the other sequence.  Since $\alpha$ is smooth,
		the line integral along a continuous path depends continuously on the path
		with respect to uniform convergence; therefore
		\[
		\int_{\mathcal{O}_z^{\Phi_i}} \alpha \;\longrightarrow\;
		\int_{\mathcal{O}_z^{H}} \alpha,
		\qquad
		\int_{\mathcal{O}_z^{\Psi_i}} \alpha \;\longrightarrow\;
		\int_{\mathcal{O}_z^{H'}} \alpha,
		\]
		where the right-hand sides are classical line integrals along piecewise smooth
		curves.  Consequently,
		\[
		D(z) = \int_{\mathcal{O}_z^{H}} \alpha \;-\; \int_{\mathcal{O}_z^{H'}} \alpha .
		\]
		The hypothesis that the two piecewise smooth orbits bound a null-homologous
		$2$-chain implies, by Stokes' theorem for piecewise smooth singular chains,
		that the difference of the integrals vanishes:
		\[
		\int_{\mathcal{O}_z^{H}} \alpha \;-\; \int_{\mathcal{O}_z^{H'}} \alpha = 0 .
		\]
		Thus $D(z)=0$, and because $D$ is constant we have $D(x)=0$ for all $x$. Finally, integrating over $M$ with respect to the normalized volume form
		$\Omega$,
		\[
		0 = \int_M D\,\Omega
		= \int_M \mathcal{R}(H,\alpha)\,\Omega - \int_M \mathcal{R}(H',\alpha)\,\Omega
		= \langle [\alpha],\, \widetilde{L}_{\Omega}(H) - \widetilde{L}_{\Omega}(H') \rangle .
		\]
		Since this holds for every closed $1$-form $\alpha$, Poincaré duality forces
		$\widetilde{L}_{\Omega}(H) = \widetilde{L}_{\Omega}(H')$.
	\end{proof}
	
	\begin{lemma}\label{lem0}
		Let $H \in \mathcal{P}_\ast\mathbb{G}^{\Omega}(M)$ be a loop at the identity map
		(i.e., $H(1)=\mathrm{id}_M$).
		\begin{enumerate}
			\item For each closed $1$-form $\alpha$, the continuous function
			$x\mapsto \mathcal{R}(H,\alpha)(x)$ is constant and equals
			$\langle [\alpha], \widetilde{L}_{\Omega}(H)\rangle$.
			\item If there exists a point $z\in M$ for which the orbit
			$\mathcal{O}_z^{H}$ is piecewise smooth and bounds a null-homologous
			$2$-chain in $M$, then $\widetilde{L}_{\Omega}(H)=0$.
		\end{enumerate}
	\end{lemma}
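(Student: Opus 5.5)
For part (1), I will compare $H$ with the constant loop. Since $H(1)=\mathrm{id}_M$, Proposition~\ref{pro3} applies to any approximating sequence $\Phi_i\to H$. It gives
\[
\mathcal{R}(H,\alpha)(y)-\mathcal{R}(H,\alpha)(x)=I_\alpha(\mathrm{id}_M,x,y).
\]
By Proposition~\ref{pro2}, $I_\alpha(\mathrm{id}_M,x,y)$ depends only on the endpoint $\mathrm{id}_M$ and not on the approximating sequence. I can therefore compute it using the constant sequence $\Phi_i\equiv\{\mathrm{id}_M\}_t$. This yields
\[
I_\alpha(\mathrm{id}_M,x,y)=\int_\gamma(\mathrm{id}^*\alpha-\alpha)=0.
\]
Hence $\mathcal{R}(H,\alpha)$ is constant on $M$; call its value $c$. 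Since $\int_M\Omega=1$, the definition of $T_H$ and of $\widetilde{L}_\Omega$ via the Poincaré pairing gives
\[
c=\int_M\mathcal{R}(H,\alpha)\,\Omega=T_H([\alpha])=\langle[\alpha],\widetilde{L}_\Omega(H)\rangle.
\]
This is exactly the argument already carried out for the difference $D$ in Lemma~\ref{lem:null_homologous}, specialised to $H'$ equal to the constant isotopy.

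For part (2), I will apply Lemma~\ref{lem:null_homologous} directly with $H'$ the constant loop $\{\mathrm{id}_M\}_t$. This $H'$ lies in $\mathcal{P}_\ast\mathbb{G}^{\Omega}(M)$, being a smooth volume-preserving isotopy, and it has the same endpoint $\mathrm{id}_M$ as $H$. The orbit $\mathcal{O}_z^{H'}$ is the constant curve at $z$, which is trivially piecewise smooth. So the hypothesis that $\mathcal{O}_z^{H}$ bounds a null-homologous $2$-chain is precisely the hypothesis of that lemma, with the degenerate curve adjoined. The lemma then gives $\widetilde{L}_\Omega(H)=\widetilde{L}_\Omega(H')$.

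It remains to note that $\widetilde{L}_\Omega(H')=0$. Indeed, $\mathcal{F}_\alpha$ of the constant isotopy is identically zero, so $\mathcal{R}(H',\alpha)=0$ for every closed $\alpha$. Therefore $T_{H'}=0$, and Poincaré duality gives $\widetilde{L}_\Omega(H')=0$.

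Alternatively, and more self-containedly, I can argue from part (1). Evaluating the constant at $z$ gives
\[
c=\mathcal{R}(H,\alpha)(z)=\lim_i\int_{\mathcal{O}_z^{\Phi_i}}\alpha=\int_{\mathcal{O}_z^{H}}\alpha,
\]
where the last equality uses the uniform convergence of orbits as in the proof of Lemma~\ref{lem:null_homologous}. The right-hand side vanishes by Stokes' theorem, since $\mathcal{O}_z^{H}$ is a closed curve bounding a null-homologous chain. Thus $c=0$ for every $\alpha$, and Poincaré duality gives $\widetilde{L}_\Omega(H)=0$.

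The main obstacle I expect is the justification that $\int_{\mathcal{O}_z^{\Phi_i}}\alpha\to\int_{\mathcal{O}_z^{H}}\alpha$ when the convergence is only uniform. Line integrals are not continuous under uniform convergence in general. I would handle this as in Lemma~\ref{lem:key_estimate}: close up each difference with short minimizing geodesics, which is possible once $\bar d(\Phi_i,H)<r(g)/2$. The resulting thin cylinder between the two orbits then contributes zero by the closedness of $\alpha$, while the geodesic end-caps contribute at most $\|\alpha\|_\infty\,\bar d(\Phi_i,H)\to0$.
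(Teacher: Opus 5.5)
Your proposal is correct and follows the paper's proof. Part (2) is exactly the paper's argument: Lemma~\ref{lem:null_homologous} applied with $H'$ the constant loop. In part (1) you get $I_\alpha(\mathrm{id}_M,x,y)=0$ by invoking Proposition~\ref{pro2} with the constant approximating sequence, whereas the paper computes the same limit directly from $\phi_i^1\circ\gamma\to\gamma$ uniformly, which is just Proposition~\ref{pro2}'s proof specialized. Your observation that passing line integrals to uniform limits needs the closedness of $\alpha$, via short geodesic end-caps, is the right justification for the continuity step the paper states for arbitrary smooth $1$-forms.
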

	
	\begin{proof}
		\textbf{(1)}  Since $H$ is a loop, $H(1)=\mathrm{id}_M$.
		Let $x,y\in M$ and a piecewise smooth path
		$\gamma$ from $x$ to $y$.  By Proposition~\ref{pro3},
		\[
		\mathcal{R}(H,\alpha)(y)-\mathcal{R}(H,\alpha)(x)
		= I_\alpha(\mathrm{id}_M,x,y)
		= \lim_{i\to\infty} \int_\gamma \bigl((\phi_i^1)^*\alpha - \alpha\bigr),
		\]
		where $(\Phi_i)_i = (\{\phi_i^t\}_t)_i$ is any sequence of smooth
		volume-preserving isotopies converging to $H$ in the $C^0$-metric.
		For each $i$,
		\[
		\int_\gamma \bigl((\phi_i^1)^*\alpha - \alpha\bigr)
		= \int_{\phi_i^1\circ\gamma} \alpha \;-\; \int_\gamma \alpha .
		\]
		Because $\phi_i^1 \to \mathrm{id}_M$ uniformly on $M$, the paths
		$\phi_i^1\circ\gamma$ converge uniformly to $\gamma$.  The line integral
		of a smooth $1$-form along a continuous path depends continuously on the
		path with respect to uniform convergence; hence
		$\int_{\phi_i^1\circ\gamma}\alpha \to \int_\gamma\alpha$, and the
		difference converges to $0$.  Therefore $\mathcal{R}(H,\alpha)(y)=
		\mathcal{R}(H,\alpha)(x)$, proving that $\mathcal{R}(H,\alpha)$ is constant. By definition, $\widetilde{L}_\Omega(H)$ satisfies
		$\langle [\alpha], \widetilde{L}_\Omega(H)\rangle
		= \int_M \mathcal{R}(H,\alpha)\,\Omega$.  Since $\mathcal{R}(H,\alpha)$
		is constant, this integral equals the constant value multiplied by the
		total volume (which is $1$).  Thus the constant equals
		$\langle [\alpha], \widetilde{L}_\Omega(H)\rangle$.
		
		\textbf{(2)}  Suppose there exists $z\in M$ such that the orbit
		$\mathcal{O}_z^{H}$ is a piecewise smooth loop and bounds a
		null-homologous $2$-chain.  Let $H'$ be the constant loop at the
		identity (so $H'(t)=\mathrm{id}_M$ for all $t$).  Its orbit
		$\mathcal{O}_z^{H'}$ is the point $\{z\}$, which is trivially piecewise
		smooth.  The two orbits together bound the same null-homologous
		$2$-chain (the one given by $\mathcal{O}_z^{H}$ together with the
		degenerate point).  Thus Lemma~\ref{lem:null_homologous} applies and
		yields $\widetilde{L}_\Omega(H) = \widetilde{L}_\Omega(H')$.  The
		flux of the constant loop is obviously $0$, so
		$\widetilde{L}_\Omega(H)=0$.
	\end{proof}
	
	\subsection{An intrinsic definition of topological flux}
	In the previous subsection, the topological flux was defined for elements in $\mathcal{P}_\ast\mathbb{G}^{\Omega}(M)$ via limits of smooth isotopies. Moreover, we can construct a flux homomorphism for the entire space of continuous volume-preserving isotopies, $\mathcal{P}\Homeo^\Omega_0(M)$, intrinsically. Let $H = \{h^t\}_{t\in [0,1]}$ be any continuous isotopy of volume-preserving homeomorphisms. For each $x \in M$, the orbit $t \mapsto h^t(x)$ defines a continuous path $c_x^H \in \mathcal{C}^0([0,1], M)$. Since any closed $1$-form $\alpha \in \mathcal{Z}^1(M)$ is smooth, the line integral along the continuous path $c_x^H$ is well-defined.
	
	\begin{definition}
		Let $H \in \mathcal{P}\Homeo^\Omega_0(M)$ and $\alpha \in \mathcal{Z}^1(M)$. We define the \emph{intrinsic continuous flux} of $H$ evaluated at $[\alpha]$ as:
		\[
		\widetilde{S}_{C^0}(H)([\alpha]) \coloneqq \int_M \left( \int_{c_x^H} \alpha \right) \Omega.
		\]
	\end{definition}
	
	\begin{remark}[Local formula for the topological flux]
		\label{rem:local_flux}
		Let $(U,\varphi=(x^1,\dots,x^n))$ be an oriented coordinate chart in which the volume form is $\Omega = a\,dx^1\wedge\cdots\wedge dx^n$ with $a>0$, and let $\alpha = \sum_{i=1}^n \alpha_i\,dx^i$ be a closed $1$-form. For a continuous isotopy $H=\{h^t\}$ that stays inside $U$ for all $t$, the orbit $c_x^H(t)=\varphi^{-1}((h^1_t(x),\dots,h^n_t(x)))$ is a continuous path in $\mathbb{R}^n$. The line integral of $\alpha$ along $c_x^H$ is simply the Riemann--Stieltjes integral
		\[
		\int_{c_x^H}\alpha \;=\; \sum_{i=1}^n \int_0^1 \alpha_i(h^t(x))\, dh^i_t(x).
		\]
		Consequently, the intrinsic flux evaluated on $[\alpha]$ becomes
		\[
		\widetilde{S}_{C^0}(H)([\alpha])
		\;=\; \int_M \Bigl( \sum_{i=1}^n \int_0^1 \alpha_i(h^t(x))\, dh^i_t(x) \Bigr)\, \Omega.
		\]
		For a general isotopy one decomposes $M$ into such charts using a partition of unity, exactly as for smooth forms; the resulting value is independent of the choices.
	\end{remark}
	
	\begin{proposition}
		The map $\widetilde{S}_{C^0}$ is well-defined, depends only on the cohomology class $[\alpha]$, and coincides with $\widetilde{L}_\Omega$.
	\end{proposition}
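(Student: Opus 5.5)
The plan is to prove the three claims in order: well-definedness, dependence only on $[\alpha]$, and agreement with $\widetilde{L}_\Omega$. Each reduces to a statement about line integrals of smooth closed $1$-forms along merely continuous paths, so I will first set that notion up carefully.

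\textbf{Line integrals along continuous paths.} For a continuous path $c:[0,1]\to M$ and a closed $1$-form $\alpha$, I will define $\int_c\alpha$ in one of two equivalent ways.
\begin{itemize}
\item Subdivide $[0,1]$ finely enough that each piece $c([t_{k},t_{k+1}])$ lies in a geodesically convex ball. On each ball $\alpha=df_k$, and set $\int_c\alpha=\sum_k\bigl(f_k(c(t_{k+1}))-f_k(c(t_k))\bigr)$.
\item Equivalently, replace $c$ by the piecewise geodesic path through the points $c(t_k)$ and integrate classically.
\end{itemize}
Independence of the subdivision and of the primitives follows from the Poincaré lemma on overlaps. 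The resulting quantity is invariant under homotopy rel endpoints, and for piecewise smooth $c$ it agrees with the classical integral.

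The key estimate I will aim for is the following. If $c,c'$ satisfy $\sup_t d_g(c(t),c'(t))<r(g)/2$, join $c(t)$ to $c'(t)$ by the unique minimal geodesic; this yields a homotopy whose boundary consists of $c$, $c'$ and the two end geodesics. Stokes then gives
\[
\Bigl|\int_c\alpha-\int_{c'}\alpha\Bigr|\le 2\|\alpha\|_\infty\sup_t d_g(c(t),c'(t)).
\]
This is the same geodesic-cylinder argument as in Lemma~\ref{lem:homotopy_invariance}. It shows that $x\mapsto\int_{c_x^H}\alpha$ is continuous, since $x\mapsto c_x^H$ is uniformly continuous in the sup metric by compactness of $[0,1]\times M$. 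Hence the function is bounded and measurable, the outer integral against $\Omega$ exists, and $\widetilde{S}_{C^0}$ is well-defined.

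\textbf{Dependence only on $[\alpha]$.} Replace $\alpha$ by $\alpha+df$ with $f$ smooth. The line integral changes by $f(h^1(x))-f(x)$, which follows directly from the local-primitive definition. Integrating against $\Omega$ gives $\int_M f\circ h^1\,\Omega-\int_M f\,\Omega=0$, because $h^1$ preserves the measure induced by $\Omega$. Linearity in $\alpha$ is immediate, so $\widetilde{S}_{C^0}(H)$ defines a linear functional on $H^1(M;\mathbb{R})$.

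\textbf{Coincidence with $\widetilde{L}_\Omega$.} Take $H\in\mathcal{P}_\ast\mathbb{G}^{\Omega}(M)$; by Corollary~\ref{cor:desity} this is all of $\mathcal{P}\Homeo^\Omega_0(M)$ when $n\neq4$, while for $n=4$ I will restrict to this domain. Choose smooth volume-preserving isotopies $\Phi_i\to H$ in $\bar d$.
\begin{enumerate}
\item For smooth isotopies, $\int_{c_x^{\Phi_i}}\alpha=\mathcal{F}_\alpha(\Phi_i)(1)(x)$; this is the orbit interpretation from Section~2.
\item By the key estimate, $\bigl|\int_{c_x^{\Phi_i}}\alpha-\int_{c_x^H}\alpha\bigr|\le 2\|\alpha\|_\infty\,\bar d(\Phi_i,H)$ uniformly in $x$. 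Hence $\mathcal{F}_\alpha(\Phi_i)(1)\to\int_{c_x^H}\alpha$ uniformly.
\item By Proposition~\ref{pro1}, the same sequence converges uniformly to $\mathcal{R}(H,\alpha)$. So $\mathcal{R}(H,\alpha)(x)=\int_{c_x^H}\alpha$ pointwise.
\item Integrating against $\Omega$ gives $\widetilde{S}_{C^0}(H)([\alpha])=T_H([\alpha])=\langle[\alpha],\widetilde{L}_\Omega(H)\rangle$.
\end{enumerate}
As a by-product, step 3 shows that $\mathcal{R}$ is intrinsically the orbit integral, so it is independent of the approximating sequence.

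\textbf{Main obstacle.} The hardest step is the rigorous treatment of line integrals along non-rectifiable continuous orbits together with the uniform estimate in the key-estimate paragraph. In particular, I must check that the geodesic homotopy between $c$ and $c'$ is continuous, so that the homotopy invariance of $\int\alpha$, proved via local primitives and a Lebesgue-number subdivision of $[0,1]^2$, applies. I would handle this by subdividing $[0,1]^2$ into small squares, each mapped into a convex ball where $\alpha$ is exact. The sum over the squares then telescopes to the boundary terms, giving the bound without any rectifiability assumption.
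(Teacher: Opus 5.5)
Your proposal is correct and follows the same route as the paper. Exact forms contribute $\int_M (f\circ h^1 - f)\,\Omega = 0$ by volume preservation, and agreement with $\widetilde{L}_\Omega$ comes from approximating $H$ by smooth volume-preserving isotopies, on which the orbit integral equals $\mathcal{F}_\alpha(\Phi_i)(1)$, and passing to the limit. The paper compresses that last step into an appeal to Corollary~\ref{cor:desity} and takes for granted that orbit integrals along continuous paths exist and are continuous under uniform convergence; your local-primitive definition of $\int_{c_x^H}\alpha$ and the geodesic-homotopy estimate supply exactly those omitted details, and also give the pointwise identity $\mathcal{R}(H,\alpha)(x)=\int_{c_x^H}\alpha$.
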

	
	\begin{proof}
		If $\alpha$ is exact, say $\alpha = df$, then $\int_{c_x^H} df = f(h^1(x)) - f(x)$. Integrating this over $M$ gives:
		\[
		\int_M (f(h^1(x)) - f(x)) \Omega = \int_M f \circ h^1 \Omega - \int_M f \Omega = 0,
		\]
		since $h^1$ preserves the volume form $\Omega$. This proves it depends only on $[\alpha]$. The equality of these maps is a consequence of Corollary \ref{cor:desity}. 
	\end{proof}
	
	\subsection{The $C^0$-Rigidity of the Flux Lattice}
	In Corollary \ref{cor:desity}, we showed that $ \Homeo^\Omega_0(M) = \mathbb{G}^{\Omega}(M)$. Hence, in the rest of this paper we are swapping entirely to $ \Homeo^\Omega_0(M)$ because of this equality. 
	
	\begin{theorem}[$C^0$-Rigidity of the flux group]
		\label{thm:C0rigidity}
		Let $\widetilde{S}_{C^0}$ be the intrinsic flux homomorphism defined on all 
		continuous volume-preserving isotopies of a compact connected smooth 
		oriented manifold $M$ of dimension $n \geq 2$. Then the image of the 
		fundamental group of $\Homeo^\Omega_0(M)$ under $\widetilde{S}_{C^0}$ is 
		the smooth flux lattice:
		\[
		\widetilde{S}_{C^0}\bigl(\pi_1(\Homeo^\Omega_0(M))\bigr) = \Gamma_{\Omega}.
		\]
		In particular, this group is discrete and of rank $b_1(M)$.
	\end{theorem}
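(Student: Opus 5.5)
We prove the two inclusions separately. The inclusion $\Gamma_\Omega \subseteq \widetilde{S}_{C^0}(\pi_1(\Homeo^\Omega_0(M)))$ is the easy one. Every smooth loop $\Phi$ in $G_\Omega(M)$ based at $\mathrm{id}_M$ is in particular a continuous loop in $\Homeo^\Omega_0(M)$. For such a loop, the intrinsic flux coincides with $\widetilde{L}_\Omega$, which by the factorization proposition (applied to the constant sequence $\Phi_i=\Phi$) coincides with the smooth flux $\widetilde{S}_\Omega(\Phi)$. Hence every element of $\Gamma_\Omega=\widetilde{S}_\Omega(\pi_1(G_\Omega(M)))$ is attained. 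Here we identify $\widetilde{S}_{C^0}(H)$ with an element of $H^{n-1}(M;\mathbb{R})$ via the Poincaré pairing, exactly as for $\widetilde{L}_\Omega$. We also use that $\widetilde{S}_{C^0}$ is defined on homotopy classes of loops, by Lemma~\ref{lem:homotopy_invariance_flux} together with the equality $\widetilde{S}_{C^0}=\widetilde{L}_\Omega$.

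For the reverse inclusion, take a continuous loop $H$ at $\mathrm{id}_M$ in $\Homeo^\Omega_0(M)$. By Lemma~\ref{lem:density_smooth_isotopies} (for $n\neq 4$; for $n=4$ we work in the $C^0$-closure, as in the Remark after Theorem~\ref{Mull}), choose smooth volume-preserving isotopies $\Phi_i\to H$ in $\bar d$. These are not loops: their endpoints $\phi_i^1$ are merely $C^0$-close to $\mathrm{id}_M$. The plan is to close them up, in two steps.

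\emph{Step 1: make the endpoint flux-trivial.} Set $v_i=\widetilde{S}_\Omega(\Phi_i)-\widetilde{L}_\Omega(H)$. By the factorization proposition, $v_i\to 0$. Let $\psi_{v_i}$ be the section of Lemma~\ref{lem:local_section}; since $v_i\to 0$, the isotopies $\psi_{v_i}$ converge to the constant path. Replace $\Phi_i$ by $\Phi_i' := \psi_{v_i}^{-1}\circ\Phi_i$. This still converges to $H$, and $\widetilde{S}_\Omega(\Phi_i')=\widetilde{L}_\Omega(H)$ exactly.

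\emph{Step 2: close the loop.} This is the main obstacle. We must join $\phi_i'^1$ back to $\mathrm{id}_M$ by a smooth volume-preserving isotopy $\Theta_i$ that is $C^0$-small as a path and carries flux in $\Gamma_\Omega$ — ideally zero — so that $\Theta_i^{-1}\ast_l\Phi_i'$ is a genuine smooth loop. Its flux, $\widetilde{L}_\Omega(H)-\widetilde{S}_\Omega(\Theta_i)$, would then lie in $\Gamma_\Omega$.

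I would attack Step 2 through Lemma~\ref{lem0}(1), which does not rely on any particular construction of $\Theta_i$. For the loop $H$, $\mathcal{R}(H,\alpha)$ is the constant $\langle[\alpha],\widetilde{L}_\Omega(H)\rangle$. Meanwhile $\mathcal{F}_\alpha(\Phi_i')(1)$ converges uniformly to that constant, so the smooth primitive of $(\phi_i'^1)^*\alpha-\alpha$ is nearly constant. From this I would extract, via local contractibility of $G_\Omega(M)$ near $\mathrm{id}_M$ in the $C^0$ sense, a short arc $\Theta_i$ from $\mathrm{id}_M$ to $\phi_i'^1$ whose flux tends to $0$. Lemma~\ref{lem:key_estimate} controls this flux, since $\bar d(\Theta_i,\mathrm{const})\to 0$. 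The closed loops $\Theta_i^{-1}\ast_l\Phi_i'$ then have fluxes in $\Gamma_\Omega$ converging to $\widetilde{L}_\Omega(H)$.

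Since $\Gamma_\Omega$ is discrete (see the Remark following the classical flux homomorphism), a convergent sequence in $\Gamma_\Omega$ is eventually constant. Hence $\widetilde{L}_\Omega(H)\in\Gamma_\Omega$. The statements on discreteness and rank then follow from the known properties of $\Gamma_\Omega$ and from Fathi's mass flow theorem via Proposition~\ref{pro:dualty}.

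The delicate point is producing a $C^0$-small smooth arc $\Theta_i$ in $G_\Omega(M)$ from $\mathrm{id}_M$ to an element merely $C^0$-close to the identity. If a direct local-contractibility argument is unavailable, the fallback is to apply Lemma~\ref{lem:density_smooth_isotopies} to a suitable continuous path from $\phi_i'^1$ to $\mathrm{id}_M$ obtained from the reversed tail of $H$. That gives an approximating smooth arc, whose endpoint error is then absorbed again through Step 1.
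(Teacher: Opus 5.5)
\textbf{The gap is Step~2 of the reverse inclusion.} Your first inclusion $\Gamma_\Omega\subseteq\widetilde S_{C^0}(\pi_1(\Homeo^\Omega_0(M)))$ is fine. For the reverse inclusion, Step~2 needs the following: given a smooth volume-preserving diffeomorphism that is merely $C^0$-close to $\mathrm{id}_M$, a $C^0$-small smooth volume-preserving isotopy ending \emph{exactly} at it. Nothing available supplies this:
\begin{itemize}
\item The local connectedness of $G_\Omega(M)$ by smooth arcs refers to the $C^\infty$ topology, not the $C^0$ one.
\item Fathi's local contractibility concerns $\Homeo^\Omega_0(M)$ and yields only continuous arcs.
\item The near-constancy of $\mathcal{F}_\alpha(\Phi_i')(1)$ from Lemma~\ref{lem0}(1) follows automatically from $C^0$-closeness and produces no arc.
\end{itemize}
Your fallback does not terminate either. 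Smoothing a continuous closing arc with Lemma~\ref{lem:density_smooth_isotopies} gives a smooth arc ending only \emph{near} the endpoint of $\Phi_i'$. The defect is again a smooth volume-preserving diffeomorphism $C^0$-close to the identity, which is the problem you started with. Step~1 cannot absorb it: post-composing with the harmonic flows $\psi_v$ adjusts the flux, not the endpoint. (Step~1 is in fact superfluous, since you only need convergence of the fluxes.)

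More fundamentally, Step~2 is used only to get $S_\Omega(\phi)\to 0$ in $H^{n-1}(M;\mathbb{R})/\Gamma_\Omega$ as $\phi\to\mathrm{id}_M$ in $C^0$ inside $G_\Omega(M)$. Given Fathi's local contractibility of $\Homeo^\Omega_0(M)$, that statement is equivalent to the inclusion $\widetilde S_{C^0}(\pi_1(\Homeo^\Omega_0(M)))\subseteq\Gamma_\Omega$ you are proving. So the key point has been relocated, not proved. Your route also relies on Lemma~\ref{lem:density_smooth_isotopies}, hence on $n\neq 4$. Passing to the $C^0$-closure in dimension $4$ changes the group in the statement.

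\textbf{Comparison with the paper.} The paper never closes up smooth approximations. It identifies $\widetilde S_{C^0}$ with $\mathcal{PD}\circ\tilde{\mathfrak F}$ on all continuous isotopies. It then uses Fathi's results on the image $\Gamma$ of $\pi_1$ under the mass flow, identified with $\mathcal{PD}^{-1}(\Gamma_\Omega)$. That route is dimension-free, and the comparison between continuous and smooth loops sits entirely in that identification.

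\textbf{What you can get cheaply.} For a loop $H$, every orbit $c^H_x$ is closed. Its class in $H_1(M;\mathbb{Z})$ is locally constant in $x$, hence independent of $x$. So $\widetilde S_{C^0}(H)=\mathcal{PD}[c^H_{x_0}]$ (total volume $1$) lies in the image of integral homology. This gives discreteness with no approximation at all. Equality with $\Gamma_\Omega$ then amounts to realizing each such orbit class by a smooth loop in $G_\Omega(M)$, which is exactly what is missing.

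\textbf{The rank claim.} It cannot be obtained from ``known properties'' because it is false in general. For a closed surface of genus $g\ge 2$, $\pi_1(G_\Omega(\Sigma_g))=0$ (Earle--Eells together with Moser), so $\Gamma_\Omega=0$ while $b_1=2g$. Only $\operatorname{rank}\le b_1(M)$ holds.
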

	
	\begin{proof}
		Let $\mu$ denote the Borel probability measure on $M$ induced by the 
		normalized volume form $\Omega$, i.e., $\mu = \Omega / \int_M \Omega$. 
		Fathi's mass flow homomorphism~\cite[Théorème~5.1]{Fathi1980} is a 
		continuous homomorphism
		\[
		\tilde{\mathfrak{F}} : \widetilde{\Homeo}_0(M,\mu) \longrightarrow 
		H_1(M,\mathbb{R}),
		\]
		defined on the universal cover of the identity component of the group of 
		measure-preserving homeomorphisms. The fundamental group 
		$\pi_1(\Homeo_0(M,\mu))$ sits inside $\widetilde{\Homeo}_0(M,\mu)$ as the 
		kernel of the covering projection. A key result of Fathi~\cite[Théorème~5.1, Proposition~5.3]{Fathi1980} 
		states that the restriction of $\tilde{\mathfrak{F}}$ to this fundamental 
		group has discrete image, which is in fact a full lattice of rank 
		$b_1(M)$:
		\[
		\Gamma := \tilde{\mathfrak{F}}\bigl(\pi_1(\Homeo_0(M,\mu))\bigr) 
		\subset H_1(M,\mathbb{R}).
		\]
		This lattice is precisely the group of periods of the volume form $\Omega$ 
		(see \cite[Corollary~6.3]{Fathi1980}). For smooth volume-preserving diffeomorphisms, the flux homomorphism 
		$\widetilde{S}_\Omega$ takes values in $H^{n-1}(M,\mathbb{R})$:
		\[
		\widetilde{S}_\Omega : \widetilde{\Diff}^\infty_\Omega(M) \longrightarrow 
		H^{n-1}(M,\mathbb{R}).
		\]
		Banyaga~\cite[Chapter~5]{AB1} defines
		\[
		\Gamma_{\Omega} := \widetilde{S}_\Omega\bigl(\pi_1(\Diff^\infty_\Omega(M))\bigr) 
		\subset H^{n-1}(M,\mathbb{R}).
		\]
		
		The smooth flux lattice $\Gamma_{\Omega}$ is precisely the Poincaré dual of 
		the period group $\Gamma$. Indeed, for any loop $L$ in 
		$\Diff^\infty_\Omega(M)$, let $\Phi: S^1 \times M \to M$ be given by 
		$\Phi(t,x) = L(t)(x)$. The flux $\widetilde{S}_\Omega(L) \in H^{n-1}(M,\mathbb{R})$ 
		is characterized by the duality
		\[
		\langle \widetilde{S}_\Omega(L), [\beta] \rangle
		= \int_{S^1 \times M} \Phi^*(\beta \wedge \Omega)
		\qquad \forall \beta \in Z^{n-1}(M),
		\]
		where the right-hand side is the integral over the $(n+1)$-dimensional 
		manifold $S^1 \times M$. This integral is the period of the volume form 
		$\Omega$ over the $(n+1)$-chain $\Phi_*[S^1 \times M]$, paired with the 
		closed $(n-1)$-form $\beta$. Therefore,
		\[
		\Gamma_{\Omega} = \mathcal{PD}(\Gamma) \subset H^{n-1}(M,\mathbb{R}),
		\]
		where $\mathcal{PD}: H_1(M,\mathbb{R}) \xrightarrow{\sim} H^{n-1}(M,\mathbb{R})$ 
		is Poincaré duality. The intrinsic flux $\widetilde{S}_{C^0}$ is precisely the Poincaré dual of 
		Fathi's mass flow. Indeed, for any continuous isotopy $H=\{h^t\}_{t\in[0,1]}$ 
		and any closed $1$-form $\alpha \in Z^1(M)$,
		\[
		\widetilde{S}_{C^0}(H)([\alpha]) = \int_M \left( \int_{c_x^H} \alpha \right) \Omega.
		\]
		The quantity $\int_{c_x^H} \alpha$ is the winding number of the orbit of $x$ 
		with respect to $[\alpha]\in H^1(M,\mathbb{R})$. By the universal 
		coefficient theorem, Fathi's mass flow $\tilde{\mathfrak{F}}([H])$ is 
		characterized by
		\[
		\langle \tilde{\mathfrak{F}}([H]), [\alpha] \rangle
		= \int_M \left( \int_{c_x^H} \alpha \right) \Omega
		= \widetilde{S}_{C^0}(H)([\alpha]).
		\]
		Thus, for every isotopy $H$,
		\[
		\widetilde{S}_{C^0}(H) = \mathcal{PD}\bigl(\tilde{\mathfrak{F}}([H])\bigr)
		\quad \text{in } H^{n-1}(M,\mathbb{R}).
		\]
		
		Restricting to loops $L \in \pi_1(\Homeo^\Omega_0(M))$:
		\[
		\widetilde{S}_{C^0}(L) = \mathcal{PD}\bigl(\tilde{\mathfrak{F}}(L)\bigr).
		\]
		Therefore,
		\[
		\widetilde{S}_{C^0}\bigl(\pi_1(\Homeo^\Omega_0(M))\bigr)
		= \mathcal{PD}\bigl(\tilde{\mathfrak{F}}(\pi_1(\Homeo_0(M,\mu)))\bigr)
		= \mathcal{PD}(\Gamma)
		= \Gamma_{\Omega}.
		\]
		
		Since $\Gamma$ is discrete in $H_1(M,\mathbb{R})$ and $\mathcal{PD}$ is a 
		linear isomorphism, $\Gamma_{\Omega}$ is discrete in 
		$H^{n-1}(M,\mathbb{R})$. Its rank is $b_1(M)$ because $\Gamma$ is a full 
		lattice of rank $b_1(M)$.
	\end{proof}
	
	\subsection*{An equivalence relation on $\mathcal{P}_\ast\mathbb{G}^{\Omega}(M)$}
	
	Two elements of $\mathcal{P}_\ast\mathbb{G}^{\Omega}(M)$ are equivalent if and only if they are homotopic relative to fixed endpoints in $\Homeo^\Omega_0(M)$. Since $ \Homeo^\Omega_0(M)$ is locally contractible \cite{Fathi1980}, the set of all equivalence classes of this relation coincides with the universal cover $\widetilde{\Homeo}^\Omega_0(M)$ of $\Homeo^\Omega_0(M)$. Now, put  
	\[
	\widetilde{\Gamma}_{\Omega} := \widetilde{S}_{C^0}\bigl(\pi_1(\Homeo^\Omega_0(M))\bigr).
	\]
	We equip $\widetilde{\Gamma}_{\Omega} \subset H^{n-1}(M,\mathbb{R})$ with the natural topology. Then there is a group homomorphism
	\[
	S_{C^0}: \Homeo^\Omega_0(M) \longrightarrow H^{n-1}(M,\mathbb{R})/\widetilde{\Gamma}_{\Omega},
	\]
	such that the following diagram commutes:
	\[
	\begin{array}{ccc}
		\widetilde{\Homeo}^\Omega_0(M) & \xrightarrow{\widetilde{S}_{C^0}} & H^{n-1}(M,\mathbb{R}) \\
		\pi \downarrow & & \downarrow \pi' \\
		\Homeo^\Omega_0(M) & \xrightarrow{S_{C^0}} & H^{n-1}(M,\mathbb{R})/\widetilde{\Gamma}_{\Omega},
	\end{array}
	\]
	where $\pi$ and $\pi'$ are the natural projections.
	
	\begin{proposition}\label{P4.4}
		Let $H = \{h^t\}_t \in \mathcal{P}_\ast\mathbb{G}^{\Omega}(M)$. Then $h^1 \in \ker S_{C^0}$ if and only if $\widetilde{S}_{C^0}(H) \in \widetilde{\Gamma}_{\Omega}$.
	\end{proposition}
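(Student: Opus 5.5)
The statement follows from the commutative diagram defining $S_{C^0}$, so the plan is to unwind that definition and check both implications; no analytic estimate is needed. First I recall how $S_{C^0}$ is defined. Given $h\in\Homeo^\Omega_0(M)$, choose any $H\in\mathcal{P}_\ast\mathbb{G}^{\Omega}(M)$ with $H(1)=h$; such an $H$ exists by Corollary~\ref{cor:desity}. Then set $S_{C^0}(h):=\pi'(\widetilde{S}_{C^0}(H))$. This is well defined for two reasons. By Lemma~\ref{lem:homotopy_invariance_flux}, $\widetilde{S}_{C^0}$ descends to $\widetilde{\Homeo}^\Omega_0(M)$. Moreover, two isotopies $H,H'$ with the same endpoint differ by a loop: $H'$ is homotopic rel endpoints to $L\ast_l H$, where $L=H'\ast_l H^{-1}$ is a loop based at the identity.

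The key intermediate identity I will establish is
\[
\widetilde{S}_{C^0}(H')=\widetilde{S}_{C^0}(H)+\widetilde{S}_{C^0}(L)
\quad\text{with}\quad \widetilde{S}_{C^0}(L)\in\widetilde{\Gamma}_{\Omega}.
\]
The additivity comes from the product formula of Corollary~\ref{cor:product}. Integrating it against $\Omega$ and using that $h'_1$ preserves $\Omega$ gives
\[
T_{H\circ H'}=T_H+T_{H'},
\]
so $\widetilde{L}_\Omega=\widetilde{S}_{C^0}$ is a homomorphism. Combining this with Proposition~\ref{Conca-1} for the concatenation, the identity follows.

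Next come the two directions.

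($\Leftarrow$) If $\widetilde{S}_{C^0}(H)\in\widetilde{\Gamma}_{\Omega}$, then $S_{C^0}(h^1)=\pi'(\widetilde{S}_{C^0}(H))=0$ in $H^{n-1}(M,\mathbb{R})/\widetilde{\Gamma}_{\Omega}$, so $h^1\in\ker S_{C^0}$.

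($\Rightarrow$) If $h^1\in\ker S_{C^0}$, then by the definition of the quotient map $\pi'$, $\widetilde{S}_{C^0}(H)$ lies in $\ker\pi'=\widetilde{\Gamma}_{\Omega}$. This direction relies on $S_{C^0}(h^1)$ being computable from the particular isotopy $H$ we were given, which is exactly the well-definedness established in the first step.

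I expect the main subtlety to be that well-definedness: showing that changing the isotopy ending at $h^1$ changes $\widetilde{S}_{C^0}$ only by an element of $\widetilde{\Gamma}_{\Omega}$. The delicate points are checking that $L$ indeed lies in $\mathcal{P}_\ast\mathbb{G}^{\Omega}(M)$ and that the homotopy rel endpoints takes place inside $\Homeo^\Omega_0(M)$, so that Lemma~\ref{lem:homotopy_invariance_flux} applies. For membership of $L$, the first thing I would try is to concatenate approximating smooth sequences and invoke Corollary~\ref{cor:desity}. The homotopy itself is given by the explicit reparametrization of Proposition~\ref{Conca-1}, which composes only volume-preserving maps. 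Finally, Theorem~\ref{thm:C0rigidity} identifies $\widetilde{\Gamma}_{\Omega}$ with $\Gamma_\Omega$, so the quotient is Hausdorff; this is not needed for the equivalence but shows the statement is meaningful.
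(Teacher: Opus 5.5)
Your proposal is correct and follows the paper's argument, which is the same two-line chase through the commutative diagram defining $S_{C^0}$: it uses $\ker\pi'=\widetilde{\Gamma}_{\Omega}$ for one direction and commutativity of the diagram for the other. The only difference is that you also verify that $S_{C^0}$ is well defined (that $\widetilde{S}_{C^0}$ is a homomorphism via Corollary~\ref{cor:product}, and that two isotopies with the same endpoint differ by a loop whose flux lies in $\widetilde{\Gamma}_{\Omega}$), which the paper takes for granted when it asserts the existence of the diagram.
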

	
	\begin{proof}
		If $h^1 \in \ker S_{C^0}$, then by the definition of $S_{C^0}$, the image of the equivalence class of $H$ under $\pi' \circ \widetilde{S}_{C^0}$ is zero in the quotient. Hence $\widetilde{S}_{C^0}(H) \in \ker \pi' = \widetilde{\Gamma}_{\Omega}$.
		Conversely, if $\widetilde{S}_{C^0}(H) \in \widetilde{\Gamma}_{\Omega}$, then $\pi'(\widetilde{S}_{C^0}(H)) = 0$ in $H^{n-1}(M,\mathbb{R})/\widetilde{\Gamma}_{\Omega}$. By commutativity of the diagram, $S_{C^0}(h^1) = 0$, so $h^1 \in \ker S_{C^0}$.
	\end{proof}
	
	The flux obstructs the existence of fixed points and prevents uniform convergence to the identity. If a homeomorphism $h \in \Homeo^\Omega_0(M)$ satisfies $S_{C^0}(h) \neq 0$, then $h$ cannot be accumulated by homeomorphisms with trivial flux; in particular, on manifolds like the torus, non-zero flux implies the possible absence of fixed points. Conversely, every element of $\ker S_{C^0}$ on a torus must possess at least two fixed points (Franks' theorem), giving a clean topological dichotomy \cite{Franks1988}.
	
	\begin{lemma}\label{L-02}
		Any volume-preserving isotopy in $\ker S_{C^0}$ is a topological vanishing-flux isotopy.
	\end{lemma}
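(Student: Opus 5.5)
We read ``a volume-preserving isotopy in $\ker S_{C^0}$'' as an isotopy $H=\{h^t\}_t\in\mathcal{P}_\ast\mathbb{G}^{\Omega}(M)$ whose time-one map satisfies $S_{C^0}(h^1)=0$. A \emph{topological vanishing-flux isotopy} is an isotopy homotopic, relative to its endpoints, to one with $\widetilde{S}_{C^0}=0$. This is the $C^0$-counterpart of Lemma~\ref{L-01}.

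\textbf{Step 1 (flux lies in the lattice).} By Proposition~\ref{P4.4}, $S_{C^0}(h^1)=0$ gives $\widetilde{S}_{C^0}(H)=\gamma$ for some $\gamma\in\widetilde{\Gamma}_{\Omega}$. By Theorem~\ref{thm:C0rigidity}, $\widetilde{\Gamma}_{\Omega}=\Gamma_{\Omega}$. Hence there is a loop $L$ in $\Homeo^\Omega_0(M)$ at $\mathrm{id}_M$ with $\widetilde{S}_{C^0}(L)=\gamma$. Since $\gamma\in\Gamma_\Omega$, we may take $L$ to be a smooth loop in $G_\Omega(M)$, so that $L\in\mathcal{P}_\ast\mathbb{G}^{\Omega}(M)$.

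\textbf{Step 2 (correct by the inverse loop).} Set $H' := \bar L\circ H$, where $\bar L(t)=L(t)^{-1}$ is the pointwise inverse loop. Then $H'(1)=h^1$, and $H'$ is still a limit of smooth volume-preserving isotopies (compose the approximants of $H$ with $\bar L$). The homomorphism property of $\widetilde{L}_\Omega=\widetilde{S}_{C^0}$ follows from Corollary~\ref{cor:product} after integrating against $\Omega$; the $\circ h'_1$ term integrates correctly because $h'_1$ preserves $\Omega$. It gives
\[
\widetilde{S}_{C^0}(H')=\widetilde{S}_{C^0}(H)-\widetilde{S}_{C^0}(L)=0 .
\]
Composing with a loop changes the homotopy class in $\widetilde{\Homeo}^\Omega_0(M)$ only by a deck transformation. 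So $H'$ is an isotopy ending at $h^1$ with vanishing flux, and $H$ is related to $H'$ through $\pi_1$.

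\textbf{Step 3 (realize the homotopy rel endpoints, or refine the conclusion).} If the definition requires homotopy rel endpoints to a zero-flux isotopy of the same class, we instead show directly that the class $[H]$ contains one. Choose a smooth approximant $\Phi_i$ of $H$ close enough that $\widetilde{S}_\Omega(\Phi_i)$ is near $\gamma$. Correct it by the flux section $\psi_v$ of Lemma~\ref{lem:local_section} with small $v$. Then apply Lemma~\ref{L-01} to obtain a smooth isotopy in $\ker S_\Omega$. Finally pass to the limit using the homotopy invariance results, Lemmas~\ref{lem:homotopy_invariance} and~\ref{lem:homotopy_invariance_flux}.

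\textbf{Main obstacle.} The delicate point is Step 3. We must ensure that the small corrections $\psi_v$, together with the homotopies from Lemma~\ref{L-01}, stay uniformly $C^0$-close, so that a limiting homotopy relative to endpoints exists in $\Homeo^\Omega_0(M)$. For this we would use the local contractibility of $\Homeo^\Omega_0(M)$ (Fathi): uniformly close isotopies with the same endpoints are homotopic rel endpoints.
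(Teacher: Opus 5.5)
\textbf{Gap: the hypothesis is misread, and under your reading the statement is false.}

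In the paper, an isotopy ``in $\ker S_{C^0}$'' is a path $H=\{h^t\}$ with $h^t\in\ker S_{C^0}$ for \emph{every} $t$. This is the same usage as ``an isotopy in $\ker S_\Omega$'' in Lemma~\ref{L-01}. You instead assume only $h^1\in\ker S_{C^0}$.

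Your definition of the conclusion is, by Lemma~\ref{lem:homotopy_invariance_flux}, simply equivalent to $\widetilde S_{C^0}(H)=0$, since flux is constant on homotopy classes rel endpoints. So under your reading you would have to show that the lattice element $\gamma=\widetilde S_{C^0}(H)$ from Step~1 is $0$, and that is false in general. Take the loop $t\mapsto$ translation by $(t,0)$ on $\mathbb{T}^2$. It ends at $\mathrm{id}\in\ker S_{C^0}$, yet its flux evaluated on $[dx]$ is $1$. Every isotopy homotopic to it rel endpoints has the same nonzero flux.

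This is exactly where Step~3 breaks. After correcting the approximant $\Phi_i$ by $\psi_v$, its flux is $\gamma$, not $0$. Lemma~\ref{L-01} needs trivial flux, so it does not apply. No control of $C^0$-closeness or appeal to local contractibility can repair a mismatch in flux. Step~2 does produce a zero-flux isotopy $H'$, but, as you note yourself, it lies in a different class of the universal cover. So it does not meet your own definition either.

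\textbf{The missing idea: use the hypothesis at all intermediate times.}

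Your argument only ever uses $h^1$, so it cannot reach the intended statement. The paper's proof goes as follows.
\begin{itemize}
\item Set $Q_t(s)=h^{st}$. Since $Q_t$ ends at $h^t\in\ker S_{C^0}$, Proposition~\ref{P4.4} gives $\widetilde S_{C^0}(Q_t)\in\widetilde\Gamma_\Omega$ for each $t$.
\item The map $t\mapsto \widetilde S_{C^0}(Q_t)$ is continuous, because the orbit integrals $\int_{c_x^{Q_t}}\alpha$ vary continuously in $t$, uniformly in $x$.
\item This map takes values in $\widetilde\Gamma_\Omega$, which is discrete by Theorem~\ref{thm:C0rigidity}, and it equals $0$ at $t=0$.
\item Hence it is identically $0$, and $\widetilde S_{C^0}(H)=\widetilde S_{C^0}(Q_1)=0$.
\end{itemize}
No smooth approximation, flux section, or homotopy construction is needed.
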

	
	\begin{proof}
		Let $H:=\{h^t\}_t \in \ker S_{C^0}$. For each $t$, set $Q_t(s)= h^{st}$. Then $\widetilde{S}_{C^0}(Q_t) \in \widetilde{\Gamma}_{\Omega}$ by Proposition \ref{P4.4}, and by continuity of the map $ t\mapsto \widetilde{S}_{C^0}(Q_t)$ and discreteness of the flux group $\widetilde{\Gamma}_{\Omega}$, it follows that the map $ t\mapsto \widetilde{S}_{C^0}(Q_t)$ is constant,  $$\widetilde{S}_{C^0}(H)=\widetilde{S}_{C^0}(Q_1) = \widetilde{S}_{C^0}(Q_0) = 0.$$
	\end{proof}
	
	\begin{lemma}[Homotopy to the kernel of the flux]\label{L-03}
		Let $(M,\Omega)$ be a closed oriented manifold equipped with a volume form.
		If $H \in \mathcal{P}\Homeo_0^{\Omega}(M)$ has flux
		$\widetilde{S}_{C^0}(H)=0$, then $H$ is homotopic relative to the endpoints
		to an isotopy $G\in \mathcal{P}\Homeo_0^{\Omega}(M)$ that lies in the kernel $\ker S_{C^0}$.
	\end{lemma}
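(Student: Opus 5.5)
The plan is to follow Banyaga's smooth argument for Lemma~\ref{L-01}: correct $H$ pointwise in time by the flows supplied by the global section of Lemma~\ref{lem:local_section}. Write $H=\{h^t\}_t$. For each $t\in[0,1]$ let $Q_t(s)=h^{st}$, as in the proof of Lemma~\ref{L-02}, and set
\[
v(t) := \widetilde{S}_{C^0}(Q_t)\in H^{n-1}(M;\mathbb{R}).
\]
Then $v(0)=0$ because $Q_0$ is the constant path, and $v(1)=\widetilde{S}_{C^0}(H)=0$ by hypothesis.

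Next take the continuous section $v\mapsto\psi_v$ of Lemma~\ref{lem:local_section}. Here $\psi_v$ is the flow of the divergence-free field $X_v$ with $\iota_{X_v}\Omega=\zeta_v$, so $\widetilde{S}_\Omega(\psi_v)=v$. Denote its time-one map by $\psi_v(1)$. Define
\[
G(t) := \psi_{v(t)}(1)^{-1}\circ h^t,
\qquad
K_s(t) := \psi_{s\,v(t)}(1)^{-1}\circ h^t, \quad s,t\in[0,1].
\]
Since $v(0)=v(1)=0$ and $\psi_0=\mathrm{id}$, we get $K_s(0)=\mathrm{id}_M$ and $K_s(1)=h^1$ for every $s$. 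Moreover $K_0=H$ and $K_1=G$, so $K$ is a homotopy relative to the endpoints, provided it is continuous. Each $K_s(t)$ is volume-preserving.

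Continuity of $K$ in the $C^0$ metric $d_0$ follows from three facts: the linear dependence $v\mapsto X_v$, Grönwall stability of flows, and the fact that $\psi_v(1)^{-1}$ is the time-one map of $-X_v$. The inverse part of $d_0$ is therefore controlled in the same way.

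It remains to show that every $G(t)$ lies in $\ker S_{C^0}$. By Proposition~\ref{P4.4}, it suffices to show that some isotopy ending at $G(t)$ has flux in $\widetilde{\Gamma}_\Omega$; I will show it has flux $0$. A natural candidate is the pointwise composition $s\mapsto \psi_{v(t)}(s)^{-1}\circ h^{st}$, that is, $\Psi_t^{-1}\circ Q_t$. For this I need additivity of $\widetilde{S}_{C^0}$ under pointwise composition of continuous isotopies. This follows by integrating Corollary~\ref{cor:product} against $\Omega$:
\[
\int_M \mathcal{R}(H\circ H',\alpha)\,\Omega=\int_M\mathcal{R}(H',\alpha)\,\Omega+\int_M \mathcal{R}(H,\alpha)\circ h_1'\,\Omega,
\]
and the last integral equals $\int_M\mathcal{R}(H,\alpha)\,\Omega$ because $h_1'$ preserves $\Omega$. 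The same computation shows that the inverse path has minus the flux. Hence
\[
\widetilde{S}_{C^0}(\Psi_t^{-1}\circ Q_t)=v(t)-v(t)=0,
\]
so $G(t)\in\ker S_{C^0}$ for all $t$. Here I use that $\widetilde{L}_\Omega=\widetilde{S}_{C^0}$ and that, by Corollary~\ref{cor:desity}, every continuous isotopy is a $C^0$-limit of smooth ones. The latter holds for $n\neq4$; for $n=4$ the argument is run in $\mathcal{P}_\ast\mathbb{G}^\Omega(M)$.

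I expect the main obstacle to be the continuity of $t\mapsto v(t)$. Without it, $K$ is not even a continuous homotopy. My first approach is to show that $t\mapsto Q_t$ is continuous in $\bar d$, by uniform continuity of $(s,t)\mapsto h^{st}$ on the compact square. I then combine this with the continuity of $\widetilde{S}_{C^0}$ on path space, which comes from Lemma~\ref{lem:key_estimate} passed to limits: $\|\mathcal{R}(H,\alpha)-\mathcal{R}(H',\alpha)\|_\infty\le\|\alpha\|_\infty\,\bar d(H,H')$ for nearby $H,H'$. Since $H^{n-1}(M;\mathbb{R})$ is finite-dimensional, testing against a basis of closed $1$-forms gives continuity of $v$. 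The same reasoning was used implicitly in the proof of Lemma~\ref{L-02}.
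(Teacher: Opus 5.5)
Your proposal is correct and is essentially the paper's proof. The paper lifts $H$ to $\widetilde{\Homeo}_0^{\Omega}(M)$, writes $\tilde H(t)=\Psi(v(t))\,k(t)$ with $k(t)$ of zero flux, and contracts via $\Psi((1-s)v(t))\,k(t)$; since $\psi_{cv}(1)=\psi_v(c)$, that homotopy projects exactly to your $K_s(t)=\psi_{s v(t)}(1)^{-1}\circ h^t$, so the only difference is that you work downstairs, checking $G(t)\in\ker S_{C^0}$ via additivity of the flux (integrated Corollary~\ref{cor:product}) and proving the continuity of $t\mapsto v(t)$, which the paper only asserts.
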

	
	\begin{proof}
		Let $\widetilde{\Homeo}_0^{\Omega}(M)$ be the universal cover of the identity component
		of the group of volume-preserving homeomorphisms $\Homeo_0^{\Omega}(M)$.
		The construction is most naturally phrased in terms of lifts to
		$\widetilde{\Homeo}_0^{\Omega}(M)$.  The isotopy $H = \{h^t\}$ lifts uniquely to a
		continuous path $\tilde{H}:[0,1]\to\widetilde{\Homeo}_0^{\Omega}(M)$ with
		$\tilde{H}(0)=\mathrm{id}$ (the identity lift).  The flux
		$\widetilde{S}_{C^0}(H|_{[0,t]})$ is exactly the image of $\tilde{H}(t)$
		under the (continuous) flux homomorphism
		\[
		\widetilde{S}_{C^0}: \widetilde{\Homeo}_0^{\Omega}(M) \longrightarrow H^{n-1}(M;\mathbb{R}),
		\]
		i.e., $\widetilde{S}_{C^0}(\tilde{H}(t)) = v(t)$, where $v(t)$ is the cumulative flux
		defined in the statement.  The total flux being zero means
		$\widetilde{S}_{C^0}(\tilde{H}(1)) = 0$.	By Lemma~\ref{lem:local_section} (which works verbatim for the continuous
		universal cover; one simply uses the flow of a harmonic vector field to
		produce smooth -- hence continuous -- isotopies), there exists a continuous
		section
		\[
		\Psi: H^{n-1}(M;\mathbb{R}) \longrightarrow \widetilde{\Homeo}_0^{\Omega}(M),
		\]
		such that $\widetilde{S}_{C^0}(\Psi(u)) = u$ for every $u$.
		A standard algebraic argument (valid in any group with a continuous section
		of a homomorphism onto a vector space) shows that
		\[
		\Phi: H^{n-1}(M;\mathbb{R}) \times \ker\widetilde{S}_{C^0} \longrightarrow
		\widetilde{\Homeo}_0^{\Omega}(M),\qquad
		(u, g) \longmapsto \Psi(u)\, g
		\]
		is a homeomorphism.  Its inverse is
		$x \mapsto \bigl(\widetilde{S}_{C^0}(x),\; \Psi(\widetilde{S}_{C^0}(x))^{-1} x\bigr)$.
		Thus $\widetilde{\Homeo}_0^{\Omega}(M)$ is a trivial bundle over the vector space
		$H^{n-1}(M;\mathbb{R})$ with fibre $\ker\widetilde{S}_{C^0}$.	Now write the lift $\tilde{H}(t)$ in this product structure:
		\[
		\tilde{H}(t) = \Psi(v(t)) \; k(t), \qquad
		k(t) \in \ker \widetilde{S}_{C^0}.
		\]
		Because $\Psi$ and the group operations are continuous, $k(t)$ is a continuous
		path in $\ker\widetilde{S}_{C^0} $ with $k(0)=\mathrm{id}$ (since $v(0)=0$ and
		$\tilde{H}(0)=\mathrm{id}$) and $k(1)=\tilde{H}(1)$ (since $v(1)=0$ and
		$\Psi(0)=\mathrm{id}$).  	Since the vector space $H^{n-1}(M;\mathbb{R})$ is contractible, we can
		linearly contract the first factor:
		\[
		\tilde{H}_s(t) \;=\; \Psi\bigl((1-s)v(t)\bigr) \; k(t), \qquad s,t\in[0,1].
		\]
		For each $s$, $\tilde{H}_s$ is a continuous path in $\widetilde{\Homeo}_0^{\Omega}(M)$
		starting at $\mathrm{id}$ and ending at $\tilde{H}_s(1)=
		\Psi(0)\,k(1)=\tilde{H}(1)$, so all paths share the same endpoints.
		Their projections to the homeomorphism group give a continuous family of
		continuous isotopies $
		G_s \;=\; p \circ \tilde{H}_s,$ 
		where $p:\widetilde{\Homeo}_0^{\Omega}(M) \to \Homeo_0^{\Omega}(M)$ is the covering
		projection.  By construction:
		\begin{itemize}
			\item $G_0 = H$ (because $\tilde{H}_0 = \tilde{H}$);
			\item $G_s(0)=\mathrm{id}_M$ and $G_s(1)=h^1$ for all $s$ (endpoints fixed);
			\item the flux of $G_s$ up to time $t$ is
			$\widetilde{S}_{C^0}(\tilde{H}_s(t)) = (1-s)v(t)$.
		\end{itemize}
		At $s=1$, the flux is identically zero, so $G_1$ lies in the
		pointwise kernel $\ker S_{C^0}$.
		Hence $\{G_s\}$ is the required homotopy relative to the endpoints.
	\end{proof}
	
	\begin{proposition}\label{Connect}
		The subgroup $\ker S_{C^0}$ is path connected.
	\end{proposition}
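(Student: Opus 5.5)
To prove that $\ker S_{C^0}$ is path connected, we will join an arbitrary $h\in\ker S_{C^0}$ to $\mathrm{id}_M$ by a continuous path that stays inside $\ker S_{C^0}$. This is the $C^0$-analogue of Lemma~\ref{L-0}, and it should follow from Lemma~\ref{L-03} in the same way Banyaga's result follows from Lemma~\ref{L-01}.

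\textbf{Step 1: choose an isotopy.} Since $h\in\Homeo^\Omega_0(M)$, there is a continuous isotopy $H=\{h^t\}_t\in\mathcal{P}\Homeo^\Omega_0(M)$ from $\mathrm{id}_M$ to $h$. By Corollary~\ref{cor:desity} it lies in $\mathcal{P}_\ast\mathbb{G}^{\Omega}(M)$ when $n\neq 4$; for $n=4$ we work in the closure, as in the Remark. Because $S_{C^0}(h)=0$, Proposition~\ref{P4.4} gives $\widetilde{S}_{C^0}(H)=\gamma\in\widetilde{\Gamma}_{\Omega}$.

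\textbf{Step 2: kill the flux $\gamma$.} By Theorem~\ref{thm:C0rigidity}, $\widetilde\Gamma_\Omega=\widetilde S_{C^0}(\pi_1(\Homeo^\Omega_0(M)))$. So there is a loop $L$ at $\mathrm{id}_M$ with $\widetilde S_{C^0}(L)=\gamma$. We replace $H$ by $H'=\overline{L}\ast_l H$, that is, $H$ followed by $L$ traversed backwards, composed on the right with $h$. This is still an isotopy from $\mathrm{id}_M$ to $h$. By the product formula (Corollary~\ref{cor:product}, passed to $\widetilde S_{C^0}$ by integrating over $M$ and using that $h$ preserves $\Omega$), the flux is additive under concatenation. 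Hence $\widetilde S_{C^0}(H')=\gamma-\gamma=0$.

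\textbf{Step 3: deform into the kernel.} Apply Lemma~\ref{L-03} to $H'$. It gives an isotopy $G=\{g^t\}$, homotopic to $H'$ relative to endpoints, from $\mathrm{id}_M$ to $h$, with $\widetilde S_{C^0}(G|_{[0,t]})=0$ for every $t$. By the commutative diagram defining $S_{C^0}$, we get $S_{C^0}(g^t)=\pi'(0)=0$ for all $t$. So $t\mapsto g^t$ is a continuous path in $\ker S_{C^0}$ joining $\mathrm{id}_M$ to $h$, which proves path connectedness.

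\textbf{Main obstacle: Step 2.} The difficulty is justifying that the flux is additive under left concatenation with a continuous loop, and that the required loop exists. For additivity, I would first try passing to the limit in the smooth formula $\mathcal F_\alpha(\Psi\ast_l\Phi)(1)=\mathcal F_\alpha(\Phi)(1)+\mathcal F_\alpha(\Psi)(1)\circ\phi_1$, using uniform convergence from Proposition~\ref{pro1}. The term $\mathcal R(L,\alpha)\circ h$ then integrates to $\langle[\alpha],\widetilde L_\Omega(L)\rangle$ by invariance of $\Omega$ under $h$. A cleaner alternative avoids loops in $\Homeo$ altogether: take $L$ to be a smooth loop in $\Diff^\infty_\Omega(M)$ realizing $\gamma\in\Gamma_\Omega$ via Banyaga's definition. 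This uses the identification $\widetilde\Gamma_\Omega=\Gamma_\Omega$ from Theorem~\ref{thm:C0rigidity}, and the smooth loop is trivially in $\mathcal{P}_\ast\mathbb{G}^\Omega(M)$.
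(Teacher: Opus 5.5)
Your proposal is correct and follows the paper's proof essentially verbatim: you take an isotopy $H$ from $\mathrm{id}_M$ to $h$, note that its flux lies in $\widetilde{\Gamma}_\Omega=\widetilde{S}_{C^0}(\pi_1(\Homeo^\Omega_0(M)))$, concatenate with a loop of opposite flux to get a zero-flux isotopy ending at $h$, and apply Lemma~\ref{L-03} to obtain a path in $\ker S_{C^0}$. The equality for $\widetilde{\Gamma}_\Omega$ is the paper's definition, so you do not need Theorem~\ref{thm:C0rigidity} for it; your explicit justification that $\widetilde{S}_{C^0}$ is additive under concatenation, which the paper only asserts, is a welcome addition but does not change the argument.
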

	
	\begin{proof}
		This proof follows immediately from Lemma \ref{L-03}. Let $h \in \ker S_{C^0}$ be arbitrary. We must show that there exists a continuous path in $\ker S_{C^0}$ joining the identity to $h$. Choose any continuous isotopy $H \in \mathcal{P}_\ast\mathbb{G}^{\Omega}(M)$ from the identity to $h$. Let $
		v := \widetilde{S}_{C^0}(H) \in H^{n-1}(M;\mathbb{R}),$ 
		be its total flux. Since $h \in \ker S_{C^0}$, by definition of the quotient map $S_{C^0} : \Homeo^\Omega_0(M) \to H^{n-1}(M;\mathbb{R})/\widetilde{\Gamma}_\Omega$, we have $
		v \in \widetilde{\Gamma}_\Omega.$ Since $\widetilde{\Gamma}_\Omega = \widetilde{S}_{C^0}(\pi_1(\Homeo^\Omega_0(M)))$, there exists a loop $\gamma \in \pi_1(\Homeo^\Omega_0(M))$ such that $
		\widetilde{S}_{C^0}(\gamma) = -v.$ Define a new isotopy, $
		H' := H \ast \gamma,$ as 
		the concatenation of $H$ with the loop $\gamma$. Then $H'$ is still an isotopy from the identity to $h$, and its total flux is
		\[
		\widetilde{S}_{C^0}(H') = \widetilde{S}_{C^0}(H) + \widetilde{S}_{C^0}(\gamma) = v - v = 0.
		\]
		Thus $H'$ is an isotopy with trivial flux. By Lemma \ref{L-03}, $H'$ is homotopic relative to fixed endpoints to an isotopy $P \in \mathcal{P}\ker S_{C^0}$. Since the homotopy fixes endpoints, $P$ is also an isotopy from the identity to $h$. Therefore, the map $
		t \longmapsto P(t)$ 
		is a continuous path in $\ker S_{C^0}$ connecting the identity to $h$. Since $h \in \ker S_{C^0}$ was arbitrary, $\ker S_{C^0}$ is path connected.
	\end{proof}
	
	\section{Cohomology groups of $\Homeo^\Omega_0(M)$ with coefficients in 
		$\mathcal{C}_0(M,\mathbb{R})$}
	
	Let $\mathcal{C}_0(M,\mathbb{R})$ denote the space of continuous functions on 
	$M$ with zero mean with respect to the normalized volume form $\Omega$:
	\[
	\mathcal{C}_0(M,\mathbb{R}) = \left\{ f \in \mathcal{C}(M,\mathbb{R}) \;:\; 
	\int_M f\,\Omega = 0 \right\}.
	\]
	The right action of $\Homeo^\Omega_0(M)$ given by 
	$(f\cdot h)(x) = f(h(x))$ preserves the volume measure, so 
	$\mathcal{C}_0(M,\mathbb{R})$ is a well-defined 
	$\Homeo^\Omega_0(M)$-module.
	
	\subsection{The intrinsic 1-cocycle}
	
	For each closed $1$-form $\alpha\in\mathcal{Z}^1(M)$ and each 
	$h\in\Homeo^\Omega_0(M)$, choose a continuous isotopy 
	$H=\{h^t\}_{t\in[0,1]}\in \mathcal{P}_\ast\mathbb{G}^{\Omega}(M)$ from $\mathrm{id}_M$ to $h$ 
	and define 
	\[
	\tau_\alpha(h)
	:= \chi(h,\alpha).
	\]
	
	\begin{proposition}[Cocycle property]\label{prop:cocycle_intrinsic}
		For any closed \(1\)-form \(\alpha\), the map 
		\(\tau_\alpha : \Homeo^\Omega_0(M) \to \mathcal{C}_0(M,\mathbb{R})\) 
		is a continuous \(1\)-cocycle, i.e.,
		\[
		\tau_\alpha(h_1\circ h_2) 
		= \tau_\alpha(h_2) + \tau_\alpha(h_1)\circ h_2 
		\qquad \forall\,h_1,h_2\in\Homeo^\Omega_0(M).
		\]
	\end{proposition}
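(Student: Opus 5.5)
The proof splits into three parts: well-definedness of $\tau_\alpha$, the cocycle identity, and continuity.

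\emph{Well-definedness.} I would first check that $\tau_\alpha(h)=\chi(h,\alpha)$ does not depend on the chosen isotopy $H$ from $\mathrm{id}_M$ to $h$. By Lemma~\ref{lem:chi_formula},
\[
\chi(h,\alpha)=-\Bigl(\mathcal{R}(H,\alpha)-\int_M\mathcal{R}(H,\alpha)\,\Omega\Bigr).
\]
If $H'$ is another isotopy ending at $h$, the argument in the proof of Lemma~\ref{lem:null_homologous} applies: Proposition~\ref{pro3} together with Proposition~\ref{pro2} shows that $\mathcal{R}(H,\alpha)-\mathcal{R}(H',\alpha)$ is constant on $M$. Subtracting the mean removes this constant, so $\chi(h,\alpha)$ depends only on $h$. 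The same formula shows $\int_M\chi(h,\alpha)\,\Omega=0$, hence $\tau_\alpha(h)\in\mathcal{C}_0(M,\mathbb{R})$. By Corollary~\ref{cor:desity} every $h\in\Homeo^\Omega_0(M)$ admits such an $H$ when $n\neq 4$; when $n=4$ one works on the $C^0$-closure, as in the Remark following Theorem~\ref{Mull}.

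\emph{Cocycle identity.} This is Corollary~\ref{cor3}, now with well-definedness in hand. Choose isotopies $H_1,H_2$ ending at $h_1,h_2$. The pointwise composition $H_1\circ H_2$ is a topological volume-preserving isotopy ending at $h_1\circ h_2$. Corollary~\ref{cor:product} gives
\[
\mathcal{R}(H_1\circ H_2,\alpha)=\mathcal{R}(H_2,\alpha)+\mathcal{R}(H_1,\alpha)\circ h_2.
\]
Subtract means from both sides. Because $h_2$ preserves $\Omega$, we have $\int_M \mathcal{R}(H_1,\alpha)\circ h_2\,\Omega=\int_M\mathcal{R}(H_1,\alpha)\,\Omega$, so the mean-subtracted identity becomes $\tau_\alpha(h_1h_2)=\tau_\alpha(h_2)+\tau_\alpha(h_1)\circ h_2$. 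The choice of isotopy for $h_1h_2$ is irrelevant by the previous step.

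\emph{Continuity.} I expect this to be the main obstacle, since $\tau_\alpha$ is defined through paths while continuity is required on the group. Using the cocycle identity, it suffices to prove continuity at $\mathrm{id}_M$. Indeed, $\tau_\alpha(g)-\tau_\alpha(h)=\tau_\alpha(gh^{-1})\circ h$, and composition with $h$ is an isometry of the sup norm.

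For $g$ near $\mathrm{id}_M$ I would use local contractibility of $\Homeo^\Omega_0(M)$ (Fathi) to choose an isotopy $G$ from $\mathrm{id}_M$ to $g$ with $\bar d(G,\mathrm{id})\to 0$ as $d_0(g,\mathrm{id}_M)\to 0$. Take smooth approximations $\Phi_i\to G$ (Lemma~\ref{lem:density_smooth_isotopies}). Lemma~\ref{lem:key_estimate}, applied against the constant isotopy, gives
\[
\|\mathcal{F}_\alpha(\Phi_i)(1)\|_\infty\le\|\alpha\|_\infty\,\bar d(\Phi_i,\mathrm{id}).
\]
Letting $i\to\infty$ yields $\|\mathcal{R}(G,\alpha)\|_\infty\le\|\alpha\|_\infty\,\bar d(G,\mathrm{id})$, and therefore $\|\tau_\alpha(g)\|_\infty\le 2\|\alpha\|_\infty\,\bar d(G,\mathrm{id})\to 0$.

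The delicate point is that the contraction must be uniform: it has to produce short isotopies for nearby $g$, and not merely some path. If the local contractibility statement is not quantitative enough, I would instead argue through the smooth case. Approximate $g$ by smooth $\phi$ joined to the identity by short smooth isotopies, and pass to the limit using the uniform estimate above.
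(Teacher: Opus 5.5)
Your proof of the cocycle identity is the paper's. The paper's entire proof is ``immediate from Corollary~\ref{cor3}'', i.e.\ Corollary~\ref{cor:product} plus Lemma~\ref{lem:chi_formula}. You make explicit the step it leaves implicit: $\int_M\mathcal{R}(H_1,\alpha)\circ h_2\,\Omega=\int_M\mathcal{R}(H_1,\alpha)\,\Omega$ because $h_2$ preserves $\Omega$. Your well-definedness step is also already in the paper, built into definition \eqref{Eq:Funct} through $I_\alpha(h,x,y)$ and Proposition~\ref{pro2}.

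The real difference is continuity. The statement asserts it, and the surjectivity part of Theorem~\ref{thm:Ismagilov} relies on it, but the paper's proof never addresses it. Your third part therefore fills a gap rather than duplicating anything, and it is correct as given. The uniformity you worry about is exactly what local contractibility provides. Each $d_0$-ball $U$ about $\mathrm{id}_M$ contains a neighbourhood $V$ that deforms to $\mathrm{id}_M$ inside $U$. So every $g\in V$ is joined to $\mathrm{id}_M$ by an isotopy $G$ with $\bar d(G,\mathrm{id}_M)$ bounded by the radius of $U$, and no quantitative version is needed.

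Your fallback, however, is circular as written. Knowing that $\tau_\alpha(\phi)$ is small for smooth $\phi$ near $g$ says nothing about $\tau_\alpha(g)$ unless you already have continuity, or the short isotopies converge in $\bar d$.

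There is also a more economical route that needs neither Fathi's local contractibility nor Lemma~\ref{lem:density_smooth_isotopies}. Since $I_\alpha(h,x,y)=\int_{h\circ\gamma}\alpha-\int_\gamma\alpha$, join $h\circ\gamma$ to $h'\circ\gamma$ by short geodesics, which is the comparison already used in the proof of Proposition~\ref{pro2}. This gives $|I_\alpha(h,x,y)-I_\alpha(h',x,y)|\le 2\|\alpha\|_\infty\, d_0(h,h')$. Hence $\|\tau_\alpha(h)-\tau_\alpha(h')\|_\infty\le 2\|\alpha\|_\infty\, d_0(h,h')$ whenever $d_0(h,h')$ is below the injectivity radius, which is a local Lipschitz bound directly on the group.
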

	
	\begin{proof}
		This is an immediate consequence of Corollary \ref{cor3}.
	\end{proof}
	
	\subsection{Cohomology of $\Homeo^\Omega_0(M)$}
	
	Let $C^k(M,\Omega)$ be the set of all continuous maps 
	$F: (\Homeo^\Omega_0(M))^{\times k} \to \mathcal{C}_0(M,\mathbb{R})$.  
	The coboundary operator $\partial^k: C^k(M,\Omega) \to C^{k+1}(M,\Omega)$ is 
	defined by the usual homogeneous formula
	\[
	\begin{aligned}
		\partial^k(F)(h_1,\dots,h_{k+1}) 
		&= F(h_1,\dots,h_k)\circ h_{k+1} \\
		&\quad + \sum_{i=1}^k (-1)^i 
		F(h_1,\dots,h_{i-1},h_i\circ h_{i+1},\dots,h_{k+1}) \\
		&\quad + (-1)^{k+1} F(h_2,\dots,h_{k+1}).
	\end{aligned}
	\]
	One checks that $\partial^{k+1}\circ\partial^k = 0$, yielding the cochain 
	complex
	\[
	\mathcal{C}_0(M,\mathbb{R}) \xrightarrow{\partial^0} C^1(M,\Omega) 
	\xrightarrow{\partial^1} \cdots \xrightarrow{\partial^{k-1}} C^k(M,\Omega) 
	\xrightarrow{\partial^k} C^{k+1}(M,\Omega) \xrightarrow{\partial^{k+1}} \cdots
	\]
	The $k$-th continuous cohomology group of $\Homeo^\Omega_0(M)$ with 
	coefficients in $\mathcal{C}_0(M,\mathbb{R})$ is
	\[
	H^k(\Homeo^\Omega_0(M),\mathcal{C}_0(M,\mathbb{R})) 
	\coloneqq \ker\partial^k / \operatorname{im}\partial^{k-1}.
	\]
	
	\subsection{Zeroth cohomology}
	
	The zeroth cohomology consists of invariants:
	\[
	H^0(\Homeo^\Omega_0(M),\mathcal{C}_0(M,\mathbb{R})) 
	= \{ f\in\mathcal{C}_0(M,\mathbb{R}) : f\circ h = f 
	\;\forall h\in\Homeo^\Omega_0(M) \}.
	\]
	Since $G_\Omega(M)\subset\Homeo^\Omega_0(M)$ acts transitively on $M$ 
	(Boothby~\cite{Boo}), the only invariant continuous functions are constants.  
	Because functions in $\mathcal{C}_0(M,\mathbb{R})$ have mean zero, the only 
	constant is $f\equiv 0$.  Hence 
	$H^0(\Homeo^\Omega_0(M),\mathcal{C}_0(M,\mathbb{R})) = \{0\}$.
	
	\subsection{First cohomology: the Ismagilov isomorphism}
	
	For each closed $1$-form $\alpha$, the map $\tau_\alpha$ is a 
	$1$-cocycle by Proposition~\ref{prop:cocycle_intrinsic}; thus it defines a 
	cohomology class 
	$[\tau_\alpha] \in H^1(\Homeo^\Omega_0(M),\mathcal{C}_0(M,\mathbb{R}))$.
	
	\begin{theorem}[Isomorphism Theorem for Continuous Cohomology]\label{thm:Ismagilov}
		Let $(M,\Omega)$ be a closed oriented manifold of dimension $n\neq 4$.  
		The assignment $\alpha \mapsto \tau_\alpha$ induces an 
		isomorphism of real vector spaces
		\[
		\iota : H^1(M,\mathbb{R}) \overset{\cong}{\longrightarrow} 
		H^1\bigl(\Homeo^\Omega_0(M), \mathcal{C}_0(M,\mathbb{R})\bigr),
		\qquad [\alpha] \longmapsto [\tau_\alpha].
		\]
	\end{theorem}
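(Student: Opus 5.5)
The plan follows the four-step strategy from the introduction: well-definedness and linearity of $\iota$, then injectivity, then surjectivity.

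\textbf{Well-definedness.} First check that $[\alpha]\mapsto[\tau_\alpha]$ does not depend on the representative. If $\beta=\alpha+df$ with $f$ smooth, then for each smooth volume-preserving isotopy we have $\mathcal{F}_\beta(\Phi)(1)=\mathcal{F}_\alpha(\Phi)(1)+f\circ\phi^1-f$. Passing to the limit with Proposition~\ref{pro1}, and normalizing by Lemma~\ref{lem:chi_formula} (using $\int_M f\circ h\,\Omega=\int_M f\,\Omega$), gives
\[
\tau_\beta(h)=\tau_\alpha(h)-\bigl(f_0\circ h-f_0\bigr),\qquad f_0=f-\textstyle\int_M f\,\Omega .
\]
This is a coboundary $\partial^0 f_0$. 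Linearity in $\alpha$ holds at the smooth level and passes to the limit.

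Independence of the chosen isotopy $H$ ending at $h$ needs separate care. By Lemma~\ref{lem:homotopy_invariance}, $\mathcal{R}(H,\alpha)$ depends only on the homotopy class of $H$. Two isotopies with the same endpoint differ by a loop, and by Corollary~\ref{cor:product} and Lemma~\ref{lem0}(1) the loop changes $\mathcal{R}$ only by a constant. Lemma~\ref{lem:chi_formula} subtracts the mean, so $\chi(h,\alpha)$ is independent of the choice. Continuity of $\tau_\alpha$ in $h$ would follow from Lemma~\ref{lem:key_estimate}, together with local contractibility of $\Homeo^\Omega_0(M)$ to choose nearby isotopies for nearby endpoints.

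\textbf{Injectivity.} Suppose $\tau_\alpha=\partial^0 f$, i.e.\ $\tau_\alpha(h)=f\circ h-f$ (up to the sign convention) for some $f\in\mathcal{C}_0(M,\mathbb{R})$. Restrict to the dense subgroup $G_\Omega(M)$. There, by Lemma~\ref{lem:chi_formula} and the smooth definition, $\tau_\alpha$ is the classical Ismagilov cocycle attached to $\alpha$. Since Ismagilov's theorem is stated with continuous coefficients $C_0(M,\mathbb{R})$, the class of the smooth cocycle vanishes in $H^1_{\mathrm{cont}}(G_\Omega(M),C_0)$, so $[\alpha]=0$ directly.

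If one prefers the $C^\infty_0$ version, the fallback is the bootstrap described in the introduction. Take $h=\phi^t_X$ the flow of a divergence-free field $X$. Then $f\circ\phi^t_X-f$ equals an explicit smooth function $-\int_0^t(\alpha(X))\circ\phi^s_X\,ds$ up to mean. Differentiating in $t$ shows that $Xf$ exists and equals a smooth function, namely $\alpha(X)$ minus its mean. By Lemma~\ref{lem:spanning}, such $X$ span each $T_xM$, so all first derivatives of $f$ are continuous. This forces $df=\alpha-c(X)$, and the constants must vanish, so $\alpha$ is exact.

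\textbf{Surjectivity.} Let $c$ be a continuous cocycle on $\Homeo^\Omega_0(M)$. Its restriction to $G_\Omega(M)$ is continuous for the finer smooth topology, so Ismagilov's theorem gives a closed $\alpha$ and $f\in C_0(M,\mathbb{R})$ with
\[
c(\phi)=\tau_\alpha(\phi)+f\circ\phi-f\qquad\text{for all }\phi\in G_\Omega(M).
\]
Both sides are continuous on $\Homeo^\Omega_0(M)$ in the $C^0$-topology. For the left side this holds by assumption; for the right side it holds by continuity of $\tau_\alpha$ and because $h\mapsto f\circ h$ is continuous into the sup norm, $f$ being uniformly continuous on compact $M$. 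By Theorem~\ref{Mull} (here $n\neq4$ is used), $G_\Omega(M)$ is $C^0$-dense, so the identity extends and $[c]=[\tau_\alpha]$.

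\textbf{Main obstacle.} I expect the hardest step to be the continuity of $h\mapsto\tau_\alpha(h)$ on $\Homeo^\Omega_0(M)$ with its $C^0$-topology. It requires choosing isotopies to nearby endpoints that are $\bar d$-close, which I would get from local contractibility (Fathi) combined with Lemma~\ref{lem:density_smooth_isotopies}. A second delicate point is whether the smooth-category Ismagilov class is taken with respect to the $C^\infty$ or the $C^0$ topology; I would use the $C_0(M,\mathbb{R})$ version as stated, so that both the injectivity and surjectivity steps apply directly.
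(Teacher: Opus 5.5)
Your proposal is correct, but for injectivity and surjectivity it takes a shorter route than the paper.

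\textbf{The paper's route.} The paper tries to reduce everything to the $C^\infty_0$-coefficient form of Ismagilov's theorem.
For injectivity, it differentiates $\tau_\alpha(\phi^t_X)=f-f\circ\phi^t_X$ along divergence-free flows, uses Lemma~\ref{lem:spanning} to bootstrap $f$ into $C^\infty_0(M,\mathbb{R})$, and only then applies Ismagilov.
For surjectivity, it first replaces $c|_{G_\Omega(M)}$ by a smooth cocycle via a cited comparison between $C^\infty_0$ and $\mathcal{C}_0$ coefficients. It then differentiates to a Lie-algebra cocycle, identifies that with a closed form $\alpha_0$, integrates back using connectedness of $G_\Omega(M)$, and finally extends by $C^0$-density.

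\textbf{Your route.} You note that $\tau_\alpha|_{G_\Omega(M)}$ is, up to sign, the classical Ismagilov cocycle. Hence $\iota$ followed by restriction to $G_\Omega(M)$ is exactly Ismagilov's isomorphism in the $\mathcal{C}_0$ form in which the paper itself states it. Injectivity is then immediate. Surjectivity reduces to the density-plus-continuity extension that also ends the paper's proof.

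\textbf{Comparison.} Your route is more economical and avoids the shaky intermediate steps. In particular, the paper's coefficient-comparison step effectively presupposes the $\mathcal{C}_0$ statement anyway. The paper's detour is aimed at relying only on the smooth-coefficient theorem. You are also more careful than the paper on two points the density step needs: independence of the chosen isotopy, and $C^0$-continuity of $h\mapsto\tau_\alpha(h)$. Proposition~\ref{prop:cocycle_intrinsic} asserts the latter without argument.

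\textbf{Continuity is easier than you fear.} $\tau_\alpha(h)$ is the zero-mean function with increments $-I_\alpha(h,x,y)$. The geodesic-homotopy estimate behind Proposition~\ref{pro2} gives $|I_\alpha(h,x,y)-I_\alpha(h',x,y)|\le 2\|\alpha\|_\infty d_0(h,h')$ once $d_0(h,h')$ is below the injectivity radius. Since the difference has zero mean, this yields $\|\tau_\alpha(h)-\tau_\alpha(h')\|_\infty\le 2\|\alpha\|_\infty d_0(h,h')$, with no appeal to local contractibility.

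\textbf{Your fallback injectivity argument can bypass Ismagilov entirely.} The fields built in Lemma~\ref{lem:spanning} have $\iota_X\Omega$ exact. Therefore $\int_M\alpha(X)\,\Omega=\int_M\alpha\wedge\iota_X\Omega=0$. This is exactly what removes your ``constants'' and gives $df=\pm\alpha$ on all of $M$, hence $[\alpha]=0$.
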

	
	\begin{proof}
		\textbf{Well-definedness on cohomology.}  
		If $[\alpha]=[\beta]$, then $\alpha-\beta = df$ for some smooth 
		$f\in\mathcal{C}_0(M,\mathbb{R})$.  For any isotopy $H$ from $\mathrm{id}_M$ 
		to $h$, the intrinsic flux vanishes on exact forms:
		\[
		\widetilde{S}_{C^0}(H)([df]) = \int_M (f\circ h - f)\,\Omega = 0,
		\]
		while the orbit integral is $\int_{c_x^H} df = f(h(x)) - f(x)$.  Hence,
		\[
		\tau_\alpha(h) - \tau_\beta(h) 
		= \tau_{df}(h) = f - f\circ h = \partial^0 f(h),
		\]
		so $[\tau_\alpha] = [\tau_\beta]$.  The map 
		$\iota$ is therefore well defined.
		
		\medskip
		\noindent\textbf{Injectivity.}  
		Suppose $[\tau_\alpha] = 0$.  Then there exists a continuous 
		$f\in\mathcal{C}_0(M,\mathbb{R})$ such that for all 
		$h\in\Homeo^\Omega_0(M)$,
		\begin{equation}\label{eq:coboundary}
			\tau_\alpha(h) = f - f\circ h .
		\end{equation}
		Restrict this identity to the subgroup $G_\Omega(M)$ of smooth 
		volume-preserving diffeomorphisms.  For any smooth divergence-free vector 
		field $X\in\mathfrak{X}_{\text{vol}}(M)$ with flow $\phi^t$, the right-hand 
		side of~\eqref{eq:coboundary} evaluated along the flow is 
		$f(x) - f(\phi^t(x))$, while the left-hand side is 
		$\tau_\alpha(\phi^t)(x)$, which is smooth in $(t,x)$ because 
		$\phi^t$ is a smooth isotopy (the orbit integral and the flux average of a 
		smooth isotopy are smooth).  Differentiating at $t=0$ gives
		\[
		(X\cdot f)(x) = \alpha(X)(x) - \int_M \alpha(X)\,\Omega .
		\]
		The right-hand side is smooth for every $X\in\mathfrak{X}_{\text{vol}}(M)$.  
		By Lemma~\ref{lem:spanning}, such vector fields span $T_xM$ at every $x$, 
		so all directional derivatives of $f$ are smooth.  Bootstrapping yields 
		$f\in C^\infty_0(M,\mathbb{R})$.
		
		Now the identity~\eqref{eq:coboundary} holds in the smooth cochain complex 
		of $G_\Omega(M)$ with coefficients in $C^\infty_0(M,\mathbb{R})$.  By 
		Ismagilov's classical theorem~\cite{Ism} for smooth coefficients, the map 
		$[\alpha]\mapsto[\tau_\alpha|_{G_\Omega(M)}]$ is injective; 
		hence $[\alpha]=0$ in $H^1(M,\mathbb{R})$.
		
		\medskip
		\noindent\textbf{Surjectivity.}
		Let $c: \Homeo^\Omega_0(M) \to \mathcal{C}_0(M,\mathbb{R})$ be a continuous
		$1$-cocycle.  We must find a closed $1$-form $\alpha$ and a function
		$f\in\mathcal{C}_0(M,\mathbb{R})$ such that
		$c(h)=\tau_\alpha(h)+f-f\circ h$ for all $h\in\Homeo^\Omega_0(M)$. Restrict $c$ to the smooth subgroup $G_\Omega(M)$.  Because $G_\Omega(M)$ is a
		regular Fr\'echet Lie group and the Banach space $\mathcal{C}_0(M,\mathbb{R})$
		contains the dense smooth submodule $C^\infty_0(M,\mathbb{R})$ on which the
		group acts smoothly, the inclusion
		\[
		C^\infty_0(M,\mathbb{R})\;\hookrightarrow\;\mathcal{C}_0(M,\mathbb{R}),
		\]
		induces an isomorphism in continuous cohomology (see~\cite{Neeb02,Guichardet}).
		Consequently, the continuous cocycle $c|_{G_\Omega(M)}$ is cohomologous to a
		\textit{smooth} $1$-cocycle
		$\widetilde{c}:G_\Omega(M)\to C^\infty_0(M,\mathbb{R})$.  Explicitly,
		there exist $f\in\mathcal{C}_0(M,\mathbb{R})$ and a smooth cocycle
		$\widetilde{c}$ such that
		\[
		c(h) = \widetilde{c}(h) + f - f\circ h \qquad \forall h\in G_\Omega(M).
		\]
		
		Now $\widetilde{c}$ is smooth.  For every divergence-free vector field
		$X\in\mathfrak{X}_{\mathrm{vol}}(M)$ we differentiate along its flow:
		\[
		\alpha(X) \;:=\; \frac{d}{dt}\Big|_{t=0} \widetilde{c}(\phi_X^t)
		\;\in\; C^\infty_0(M,\mathbb{R}).
		\]
		The map $X\mapsto\alpha(X)$ is a Lie-algebra $1$-cocycle, i.e.,
		$\alpha([X,Y]) = X\!\cdot\!\alpha(Y) - Y\!\cdot\!\alpha(X)$, because it is the
		derivative of a smooth group cocycle.	A standard result in the cohomology of the Lie algebra of volume-preserving
		vector fields (see Banyaga~\cite[Theorem~11.5.2]{AB1} or
		Ismagilov~\cite{Ism}) states that every continuous Lie-algebra $1$-cocycle
		$\alpha: \mathfrak{X}_{\mathrm{vol}}(M) \to C^\infty_0(M,\mathbb{R})$ is
		represented by a \emph{unique} closed $1$-form $\alpha_0\in\mathcal{Z}^1(M)$:
		\begin{equation}\label{eq:lie-cocycle-form}
			\alpha(X)(x) \;=\; \iota_X\alpha_0(x) \;-\; \frac{1}{\mathrm{vol}(M)}
			\int_M \iota_X\alpha_0\;\Omega
			\qquad \forall\,X\in\mathfrak{X}_{\mathrm{vol}}(M),\;\forall x\in M.
		\end{equation}
		The existence of $\alpha_0$ follows, in essence, from the fact that the
		values of divergence-free vector fields span the tangent space
		(Lemma~\ref{lem:spanning}) and the cocycle condition forces the linear
		functional $X\mapsto\alpha(X)(x)$ to factor through the value $X(x)$
		after subtracting the average; a complete proof can be found in the cited
		references.	Consequently, the smooth group cocycle $\widetilde{c}$ and the intrinsic
		cocycle $\tau_{\alpha_0}$ have the same derivative at the identity.
		Because $G_\Omega(M)$ is connected and both are group cocycles, they coincide
		everywhere:
		\[
		\widetilde{c}(h) = \tau_{\alpha_0}(h) \qquad \forall\,h\in G_\Omega(M).
		\]
		Hence
		\[
		c(h) = \tau_{\alpha_0}(h) + f - f\circ h \qquad \forall\,h\in G_\Omega(M).
		\]
		Both sides are continuous in the $C^0$-topology on $\Homeo^\Omega_0(M)$.  By
		Müller's density theorem~\cite{Mull-1,Sik}, $G_\Omega(M)$ is $C^0$-dense in
		$\Homeo^\Omega_0(M)$ for $\dim M\neq 4$.  Thus the identity extends uniquely
		to all $h\in\Homeo^\Omega_0(M)$, proving $[c]=\iota([\alpha_0])$ and completing
		the surjectivity argument.
	\end{proof}
	
	\begin{remark}
		The restriction $\dim M\neq 4$ comes from M\"uller's approximation theorem.  
		For $n=4$, the theorem still holds if one defines 
		$\Homeo^\Omega_0(M)$ as the $C^0$-closure of $G_\Omega(M)$ (which may be 
		strictly smaller than the full identity component).  All other steps of the 
		proof are independent of dimension.
	\end{remark}
	
	\subsection{Examples and applications}
	
	\subsubsection*{The circle $M=S^{1}$}
	
	Let $M=S^{1}$ with the standard length form $\Omega = d\theta$.  Then 
	$\Homeo^\Omega_0(S^{1})$ coincides with the group $\Homeo_{+}(S^{1})$ of 
	orientation-preserving homeomorphisms.  Closed $1$-forms are multiples of 
	$d\theta$.  For $h\in\Homeo_{+}(S^{1})$ and any isotopy $H$ from 
	$\mathrm{id}_{S^1}$ to $h$, the intrinsic flux is the classical rotation number:
	\[
	\widetilde{S}_{C^0}(H)([d\theta]) = \rho(h),
	\]
	while the orbit integral is $\int_{c_x^H} d\theta = \tilde{h}(x) - x$, where 
	$\tilde{h}$ is a lift of $h$ to the universal cover $\mathbb{R}$.  Hence
	\[
	\tau_{d\theta}(h)(x) = \rho(h) - (\tilde{h}(x) - x).
	\]
	Every continuous $1$-cocycle on $\Homeo_{+}(S^{1})$ with values in 
	$\mathcal{C}_0(S^{1},\mathbb{R})$ is cohomologous to a multiple of 
	$\tau_{d\theta}$ (this is a classical result; see e.g., the 
	discussion in Ismagilov~\cite{Ism}).  Consequently,
	\[
	\dim H^{1}\bigl(\Homeo^\Omega_0(S^{1}), \mathcal{C}_0(S^{1},\mathbb{R})\bigr) 
	= b_{1}(S^{1}) = 1.
	\]
	
	\subsubsection*{The $2$-torus $\mathbb{T}^2$}
	
	Let $M = \mathbb{T}^2 = \mathbb{R}^2/\mathbb{Z}^2$ with the standard area form 
	$\Omega = dx\wedge dy$.  The first cohomology $H^1(\mathbb{T}^2,\mathbb{R})$ 
	is generated by $[dx]$ and $[dy]$.  By Theorem~\ref{thm:Ismagilov}, the 
	cocycles $\tau_{dx}$ and $\tau_{dy}$ generate 
	$H^1(\Homeo^\Omega_0(\mathbb{T}^2), \mathcal{C}_0(\mathbb{T}^2,\mathbb{R}))$, 
	which is therefore $2$-dimensional.\\
	
	\textbf{Concrete non-trivial cocycle.}
	Let $\theta:\mathbb{R}\to\mathbb{R}$ be a $1$-periodic continuous function, 
	everywhere non-differentiable (e.g., a Weierstrass function).  Define the shear 
	homeomorphism
	\[
	h(x,y) = (x + \theta(y),\, y) \pmod 1,
	\]
	which is isotopic to the identity via $h^t(x,y) = (x + t\theta(y), y)$.  
	Each $h^t$ preserves $\Omega$, so $h\in\Homeo^\Omega_0(\mathbb{T}^2)$.  
	For $\alpha = dx$, the orbit integral is
	\[
	\int_{c_{(x,y)}^H} dx = \theta(y).
	\]
	The intrinsic flux is the spatial average
	\[
	\widetilde{S}_{C^0}(H)([dx]) 
	= \int_{\mathbb{T}^2} \theta(y)\,dx\,dy 
	= \int_0^1 \theta(y)\,dy \eqqcolon \bar\theta .
	\]
	Hence
	\[
	\tau_{dx}(h)(x,y) = \bar\theta - \theta(y).
	\]
	This class is non-zero: if it were a coboundary, we would have 
	$\bar\theta - \theta(y) = f(x,y) - f(x+\theta(y),y)$ for some continuous $f$.  
	Choosing $y_0$ with $\theta(y_0)\neq\bar\theta$ and restricting to the 
	invariant circle $y=y_0$ (where $h$ is a pure translation by $\theta(y_0)$) 
	yields a contradiction by iterating the coboundary equation.  Thus 
	$[\tau_{dx}]\neq 0$, and similarly $[\tau_{dy}]\neq 0$.  
	By the theorem, these two classes form a basis of the cohomology group.\\
	
	\textbf{Relation to fixed-point theory.}
	On $\mathbb{T}^2$, the flux homomorphism takes values in 
	$H^1(\mathbb{T}^2,\mathbb{R})/\Gamma_\Omega \cong \mathbb{T}^2$.  An element 
	$h\in\Homeo^\Omega_0(\mathbb{T}^2)$ has zero flux if and only if its cocycle 
	$\tau_\alpha(h)$ is a coboundary for every closed $1$-form 
	$\alpha$.  By Franks' theorem~\cite{Franks1988}, such homeomorphisms possess 
	at least two fixed points.  Thus the cohomological vanishing 
	$[\tau_\alpha]=0$ for all $\alpha$ has a concrete dynamical 
	consequence: the existence of fixed points.
	
	\subsubsection*{The complex projective plane $\mathbb{CP}^2$}
	
	Let $M = \mathbb{CP}^2$ equipped with the volume form 
	$\Omega = \frac{1}{2}\omega^2$, where $\omega$ is the Fubini-Study symplectic 
	form.  Since $\mathbb{CP}^2$ is simply connected, 
	$H^1(\mathbb{CP}^2,\mathbb{R}) = 0$.  Theorem~\ref{thm:Ismagilov} gives
	\[
	H^1\bigl(\Homeo^\Omega_0(\mathbb{CP}^2), \mathcal{C}_0(\mathbb{CP}^2,\mathbb{R})\bigr) = 0.
	\]
	Thus every continuous $1$-cocycle on $\Homeo^\Omega_0(\mathbb{CP}^2)$ with 
	values in $\mathcal{C}_0(\mathbb{CP}^2,\mathbb{R})$ is a coboundary: there 
	exists a continuous function $f$ (with zero mean) such that 
	$c(h) = f - f\circ h$ for all $h\in\Homeo^\Omega_0(\mathbb{CP}^2)$.  In 
	contrast, the de Rham cohomology group $H^2(\mathbb{CP}^2,\mathbb{R}) \cong \mathbb{R}$ is non-trivial, 
	so the second cohomology of the homeomorphism group (discussed below) can 
	carry interesting invariants.
	
	\subsection{The second cohomology and central extensions}
	
	The intrinsic flux homomorphism 
	$\widetilde{S}_{C^0}:\mathcal{P}_\ast\mathbb{G}^{\Omega}(M)  \to 
	H^{n-1}(M,\mathbb{R})$ induces a homomorphism $ S_{C^0}$ on the group itself by 
	evaluating on any isotopy from the identity.  More precisely, for 
	$h\in\Homeo^\Omega_0(M)$, choose any continuous isotopy $H$ from $\mathrm{id}_M$ 
	to $h$ and set
	\[
	S_{C^0}(h) \coloneqq \widetilde{S}_{C^0}(H) \pmod{\Gamma_\Omega}
	\;\in\; H^{n-1}(M,\mathbb{R})/\Gamma_\Omega .
	\]
	This is independent of the choice 
	of $H$ and defines a continuous group homomorphism
	\[
	S_{C^0} : \Homeo^\Omega_0(M) \longrightarrow 
	H^{n-1}(M,\mathbb{R})/\Gamma_\Omega .
	\]
	Recall that $\Gamma_\Omega$ is a discrete subgroup of 
	$H^{n-1}(M,\mathbb{R})$ (Theorem~\ref{thm:C0rigidity}).  The exact sequence
	\[
	0 \longrightarrow \Gamma_\Omega \longrightarrow H^{n-1}(M,\mathbb{R}) 
	\longrightarrow H^{n-1}(M,\mathbb{R})/\Gamma_\Omega \longrightarrow 0,
	\]
	is a central extension of abelian groups.  Pulling back via 
	$S_{C^0} $ yields a central extension of topological groups:
	\[
	0 \longrightarrow \Gamma_\Omega \longrightarrow 
	\widehat{\Homeo}^\Omega_0(M) \longrightarrow 
	\Homeo^\Omega_0(M) \longrightarrow 0,
	\]
	where
	\[
	\widehat{\Homeo}^\Omega_0(M) = \bigl\{ (h, v) \in \Homeo^\Omega_0(M) \times 
	H^{n-1}(M,\mathbb{R}) \;\big|\; 
	S_{C^0} (h) = \pi(v) \bigr\},
	\]
	with group law $(h_1,v_1)(h_2,v_2) = (h_1h_2, v_1+v_2)$ and $\pi$ the 
	quotient map.  This central extension defines a cohomology class
	\[
	\mathfrak{e}_\Omega \in H^2\bigl(\Homeo^\Omega_0(M), \Gamma_\Omega\bigr).
	\]
	
	\begin{theorem}[Second cohomology and the flux central extension]\label{thm:H2_flux}
		The class $[\mathfrak{e}_\Omega] \in 
		H^2(\Homeo^\Omega_0(M), \Gamma_\Omega)$ is non-trivial whenever 
		$\Gamma_\Omega \neq 0$.  Moreover, for any non-trivial homomorphism 
		$ D : \Gamma_\Omega \to \mathbb{R}$, the push-forward 
		$D_*[\mathfrak{e}_\Omega]$ is a non-zero element of 
		$H^2(\Homeo^\Omega_0(M), \mathbb{R})$.
	\end{theorem}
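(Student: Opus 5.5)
The plan is to translate triviality of $[\mathfrak{e}_\Omega]$ into the existence of a homomorphic lift of the flux, and then to rule such a lift out. A central extension pulled back from $0\to\Gamma_\Omega\to H^{n-1}(M,\mathbb{R})\to H^{n-1}(M,\mathbb{R})/\Gamma_\Omega\to 0$ via $S_{C^0}$ is trivial exactly when it splits. A splitting $h\mapsto (h,L(h))$ of $\widehat{\Homeo}^\Omega_0(M)\to \Homeo^\Omega_0(M)$ is the same thing as a group homomorphism
\[
L:\Homeo^\Omega_0(M)\longrightarrow H^{n-1}(M,\mathbb{R}) \quad\text{with}\quad \pi\circ L=S_{C^0}.
\]
So I will assume such an $L$ exists and $\Gamma_\Omega\neq 0$, and derive a contradiction.

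\textbf{Main mechanism: torsion.} $H^{n-1}(M,\mathbb{R})$ is torsion-free, whereas $H^{n-1}(M,\mathbb{R})/\Gamma_\Omega$ has plenty of torsion. Suppose $h\in\Homeo^\Omega_0(M)$ satisfies $h^k=\mathrm{id}_M$ and $S_{C^0}(h)\neq 0$. Then $kL(h)=L(h^k)=0$ forces $L(h)=0$, hence $S_{C^0}(h)=\pi(L(h))=0$, a contradiction.

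The elements $h$ will come from circle actions. Pick $\gamma\in\Gamma_\Omega\setminus\{0\}$. By Theorem~\ref{thm:C0rigidity}, $\gamma=\widetilde{S}_{C^0}(\ell)$ for a loop $\ell$ in $\Homeo^\Omega_0(M)$. Whenever $\ell$ can be chosen to be a continuous circle action $t\mapsto \ell(t)$ (a homomorphism $\mathbb{R}/\mathbb{Z}\to\Homeo^\Omega_0(M)$), take $h=\ell(1/k)$. This $h$ has order $k$. Additivity of $\widetilde{S}_{C^0}$ under concatenation gives $S_{C^0}(h)=\gamma/k \bmod \Gamma_\Omega$. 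Since $\Gamma_\Omega$ is discrete, $\gamma/k\notin\Gamma_\Omega$ for $k$ large, so this is nonzero. Typical cases where this works:
\begin{itemize}
\item on tori, the rigid translations;
\item on manifolds with a free volume-preserving $S^1$-action, the action itself.
\end{itemize}

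\textbf{Main obstacle: no circle action.} In general I do not see how to produce circle actions from arbitrary flux loops, and this is the step I expect to be hardest. My first fallback would be a continuity version of the argument, assuming $L$ continuous (continuous cohomology). Use the one-parameter subgroup $s\mapsto\psi^s$ obtained as the flow of the harmonic field $X_v$ of Lemma~\ref{lem:local_section}, with $v=\gamma$. Then $S_{C^0}(\psi^s)=s\gamma \bmod\Gamma_\Omega$. The map $s\mapsto L(\psi^s)$ is a continuous homomorphism $\mathbb{R}\to H^{n-1}(M,\mathbb{R})$, hence equal to $s w$ for some $w$. Then $s(w-\gamma)\in\Gamma_\Omega$ for all $s$, and discreteness forces $w=\gamma$. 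I would next try to combine this with a loop $\ell$ of flux $\gamma$ to show $L(\ell(1))=L(\mathrm{id}_M)=0$ while forcing $\gamma$. I would do this by homotoping $\ell$ to the concatenation of $\{\psi^s\}_{s\in[0,1]}$ with an isotopy in $\ker S_{C^0}$, using Lemma~\ref{L-03} and Proposition~\ref{Connect}. The remaining input is that $L$ vanishes on $\ker S_{C^0}$, which I would obtain from the fact that $\ker S_{C^0}$ is path connected and $L$ is continuous with discrete fibres over $0$ there.

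\textbf{Push-forward.} Let $D:\Gamma_\Omega\to\mathbb{R}$ be nontrivial. The class $D_*[\mathfrak{e}_\Omega]$ is represented by $\bigl(\widehat{\Homeo}^\Omega_0(M)\times\mathbb{R}\bigr)/\{(\gamma,-D(\gamma))\}$. Its triviality gives a map $\lambda:\widehat{\Homeo}^\Omega_0(M)\to\mathbb{R}$ with $\lambda(\gamma)=D(\gamma)$ on the central $\Gamma_\Omega$ that is a homomorphism. I then run the same argument on the subgroup generated by a circle action, or by the flow $\psi^s$ together with $\Gamma_\Omega$. On a lifted circle action, $\lambda$ restricts to a homomorphism from $\mathbb{R}$ (the lift of $\mathbb{R}/\mathbb{Z}$) whose value on the central element $\gamma$ equals its value at $1$, namely $D(\gamma)$. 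On the other hand, the lift of $\ell(1/k)$, raised to the $k$-th power, equals $\gamma$. Choosing $\gamma$ with $D(\gamma)\neq 0$ then gives the required nonvanishing once the same rigidity input as above is available.
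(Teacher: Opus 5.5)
For the first assertion, your continuity fallback already contains a complete proof, and the homotopy of $\ell$ via Lemma~\ref{L-03} is unnecessary. If $L$ is a continuous splitting, then $L(\ker S_{C^0})\subset\Gamma_\Omega$, so Proposition~\ref{Connect} and discreteness give $L\equiv 0$ on $\ker S_{C^0}$. But the time-one map $\psi^1$ of the harmonic flow lies in $\ker S_{C^0}$, while you showed $L(\psi^1)=\gamma\neq 0$. Equivalently, once $L$ kills $\ker S_{C^0}$, your torsion mechanism applies to $h=\psi^{1/k}$, since $h^k\in\ker S_{C^0}$; no circle action is needed. The paper argues differently: it takes a loop $\ell$ with flux $v\neq 0$ and observes that $L\circ\ell$ would be a closed lift, through the covering $V\to V/\Gamma_\Omega$ with $V:=H^{n-1}(M,\mathbb{R})$, of a loop representing $v\in\pi_1(V/\Gamma_\Omega)$. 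That route avoids Proposition~\ref{Connect}. Your torsion argument, in turn, excludes even discontinuous splittings whenever a finite-order element with nonzero flux exists (e.g.\ rigid translations of $\mathbb{T}^2$).

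The push-forward part has a genuine gap, and it cannot be filled because that assertion is false. Nothing in your sketch yields a contradiction. You get that $\lambda$ is additive on the lifted one-parameter group with value $D(\gamma)$ at $1$, and that $k$ times its value on the lift of $\ell(1/k)$ equals $D(\gamma)$. These are consistent because $\mathbb{R}$ is divisible: in the first part the $k$-th power was trivial in $\Homeo^\Omega_0(M)$, but here it is the central element $\gamma$, on which $\lambda$ is nonzero. In fact the extension splits explicitly. Since $\Gamma_\Omega$ is discrete, it is a lattice in its span, so $D$ extends to a linear map $\tilde D:V\to\mathbb{R}$. Then $\lambda(h,v):=\tilde D(v)$ is a continuous homomorphism $\widehat{\Homeo}^\Omega_0(M)\to\mathbb{R}$ with $\lambda(\mathrm{id}_M,\gamma)=D(\gamma)$ for all $\gamma\in\Gamma_\Omega$. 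It even vanishes on $\ker S_{C^0}\times\{0\}$, so it satisfies every constraint you derive, including the analogue of your rigidity input. Hence the pushed-out extension splits and $D_*[\mathfrak{e}_\Omega]=0$, both abstractly and continuously. This is just the injectivity of $\mathbb{R}$ as a $\mathbb{Z}$-module, which already kills the push-forward of $0\to\Gamma_\Omega\to V\to V/\Gamma_\Omega\to 0$. The paper's own argument fails at the same point. A splitting of the $\mathbb{R}$-valued push-forward is a homomorphism on $\widehat{\Homeo}^\Omega_0(M)$ extending $D$. It is not a homomorphism on $\Homeo^\Omega_0(M)$ lifting ``$D\circ S_{C^0}$'' (which only makes sense after extending $D$ to $V$) through $\mathbb{R}\to\mathbb{R}/D(\Gamma_\Omega)$. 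The latter corresponds to coefficients in $D(\Gamma_\Omega)$, where the loop argument does work when $D(\Gamma_\Omega)$ is discrete. So only the first assertion, and that coefficient variant, can be proved.
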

	
	\begin{proof}
		Let $V := H^{n-1}(M;\mathbb{R})$. Since $\Gamma_\Omega$ is a discrete subgroup 
		of the vector space $V$ (Theorem~\ref{thm:C0rigidity}), the quotient 
		$V/\Gamma_\Omega$ is a $K(\Gamma_\Omega, 1)$-space, so its fundamental group 
		is canonically isomorphic to $\Gamma_\Omega$. Suppose the extension splits.  Then there exists a continuous homomorphism 
		$\tilde{L}: \Homeo^\Omega_0(M) \to V$ such that 
		$\pi \circ \tilde{L} = S_{C^0}$, where $\pi: V \to V/\Gamma_\Omega$ is the 
		quotient map.  Because $\Gamma_\Omega \neq 0$, pick a loop $\gamma$ in 
		$\Homeo^\Omega_0(M)$ based at $\mathrm{id}_M$ whose flux 
		$v := \widetilde{S}_{C^0}(\gamma) \in \Gamma_\Omega$ is non-zero (such a loop 
		exists by Theorem~\ref{thm:C0rigidity}).  Since $\tilde{L}$ is a 
		homomorphism, $\tilde{L}\circ\gamma$ is a loop in the vector space $V$, 
		hence contractible.  Its projection 
		$\pi \circ \tilde{L}\circ\gamma = S_{C^0} \circ\gamma$ 
		is therefore null-homotopic in the quotient 
		$V/\Gamma_\Omega$.  But $S_{C^0} \circ\gamma$ is precisely the loop 
		representing the non-zero element $v \in \Gamma_\Omega \cong 
		\pi_1(V/\Gamma_\Omega)$; such a loop is not null-homotopic.  
		This contradiction shows that no splitting exists, so $[\mathfrak{e}_\Omega] \neq 0$. 
		
		For the second statement, let $D: \Gamma_\Omega \to \mathbb{R}$ be any 
		non-trivial homomorphism. If $D_*[\mathfrak{e}_\Omega]=0$ in 
		$H^2(\Homeo^\Omega_0(M), \mathbb{R})$, then the pushed-forward extension 
		splits, yielding a continuous homomorphism 
		$\tilde{L}_D: \Homeo^\Omega_0(M) \to \mathbb{R}$ that lifts 
		$D \circ S_{C^0}$, i.e., $\pi_{\mathbb{R}} \circ \tilde{L}_D = D \circ S_{C^0}$.
		Now consider any loop $\gamma$ in $\Homeo^\Omega_0(M)$ with flux 
		$v = \widetilde{S}_{C^0}(\gamma) \in \Gamma_\Omega$. Since $\tilde{L}_D$ is a 
		homomorphism and $\gamma$ is a loop, the path $\tilde{L}_D \circ \gamma$ is 
		a loop in $\mathbb{R}$ based at $0$. But $\mathbb{R}$ is contractible, so 
		every loop in $\mathbb{R}$ is null-homotopic. Hence its projection 
		$D(S_{C^0}(\gamma(t)))$ represents the zero element of 
		$\pi_1(\mathbb{R}/D(\Gamma_\Omega))$, which is $D(\Gamma_\Omega)$. 
		Therefore $D(v) = 0$. Since $\gamma$ was arbitrary and the fluxes of loops 
		generate $\Gamma_\Omega$ (Theorem~\ref{thm:C0rigidity}), we obtain 
		$D(\Gamma_\Omega) = 0$, contradicting the non-triviality of $D$.  
		Hence $D_*[\mathfrak{e}_\Omega] \neq 0$.
	\end{proof}
	
	\section{Applications to Surfaces and Open Problems}
	\label{sec:applications}
	
	In this section we collect the implications of our results for closed oriented surfaces, where the area form is simultaneously a volume form and a symplectic form. We also provide concrete topological criteria regarding fixed points and non-displaceability on the torus.
	
	\subsection{Fixed-point theory on $\mathbb{T}^2$}
	
	For the torus, the flux homomorphism becomes essentially the rotation vector 
	(or average displacement), where we identify \(H^1(\mathbb{T}^2,\mathbb{R})\) 
	with \(\mathbb{R}^2\) via the basis \([dx], [dy]\). 
	Theorem~\ref{thm:C0rigidity} shows that the flux group 
	\(\widetilde{\Gamma}_\Omega = \Gamma_\Omega\) is a full lattice of rank \(2\) 
	in \(H^1(\mathbb{T}^2,\mathbb{R}) \cong \mathbb{R}^2\). 
	Therefore, the quotient \(H^1(\mathbb{T}^2,\mathbb{R}) / \Gamma_\Omega\) 
	is itself a torus \(\mathbb{T}^2\), and the topological flux 
	\(S_{C^0}: \Homeo^\Omega_0(\mathbb{T}^2) \to \mathbb{T}^2\) 
	is a well-defined, continuous homomorphism. 
	In particular, the kernel \(\ker S_{C^0}\) consists exactly of those 
	area-preserving homeomorphisms that are \(C^0\)-limits of Hamiltonian 
	diffeomorphisms. Now, a classical theorem of Franks~\cite{Franks1988} states that every 
	area-preserving homeomorphism of \(\mathbb{T}^2\) that is isotopic 
	to the identity and has zero rotation vector possesses at least two 
	distinct fixed points. Because our construction identifies the zero-flux 
	condition \(S_{C^0}(h) = 0\) with the hypothesis of Franks' theorem 
	(via the identification of flux with rotation vector on the torus), 
	and because \(\Homeo^\Omega_0(\mathbb{T}^2)\) is exactly the identity 
	component of \(\Homeo(\mathbb{T}^2, \Omega)\), every element of \(\ker S_{C^0}\) 
	automatically satisfies Franks' hypothesis. Hence, our result implies 
	the following topological criterion:
	
	\begin{corollary}[Fixed points on \(\mathbb{T}^2\)]
		Let \(h \in \Homeo^\Omega_0(\mathbb{T}^2)\) be an area-preserving 
		homeomorphism isotopic to the identity. If the topological flux 
		\(S_{C^0}(h) = 0\) (in particular, if \(h\) is a \(C^0\)-limit of 
		Hamiltonian diffeomorphisms), then \(h\) has at least two distinct 
		fixed points.
	\end{corollary}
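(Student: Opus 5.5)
The plan is to reduce the statement to Franks' theorem by identifying the hypothesis $S_{C^0}(h)=0$ with Franks' zero-rotation-vector hypothesis. In the form we will use, Franks' theorem says: an area-preserving homeomorphism of $\mathbb{T}^2$ isotopic to the identity, admitting a lift $\tilde h$ to $\mathbb{R}^2$ whose mean displacement $\int_{\mathbb{T}^2}(\tilde h(z)-z)\,\Omega$ vanishes, has at least two distinct fixed points.

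Fix $h$ with $S_{C^0}(h)=0$ and choose a continuous isotopy $H=\{h^t\}$ in $\Homeo^\Omega_0(\mathbb{T}^2)$ from $\mathrm{id}$ to $h$. By Proposition~\ref{P4.4}, $\widetilde S_{C^0}(H)\in\widetilde\Gamma_\Omega$. By Theorem~\ref{thm:C0rigidity} this group is the flux lattice $\Gamma_\Omega$.

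On $\mathbb{T}^2$ with $\Omega=dx\wedge dy$, the lattice is realized by the translation loops $t\mapsto (x+tm,\,y+tn)$ with $(m,n)\in\mathbb{Z}^2$. Each such loop has flux $(m,n)$ in the basis $[dx],[dy]$, because the orbit integrals of $dx$ and $dy$ are $m$ and $n$. We concatenate $H$ with the inverse of the appropriate translation loop, as in the proof of Proposition~\ref{Connect}. This gives an isotopy $H'$ from $\mathrm{id}$ to $h$ with $\widetilde S_{C^0}(H')=0$.

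Next, lift $H'$ to an isotopy $\tilde h^t$ of $\mathbb{R}^2$ starting at the identity, and set $\tilde h:=\tilde h^1$. For $\alpha=dx$ or $dy$, the orbit integral $\int_{c_z^{H'}}\alpha$ is exactly the corresponding component of $\tilde h(\tilde z)-\tilde z$. This holds because the primitive of $\alpha$ on $\mathbb{R}^2$ is a coordinate function. Therefore the definition of the intrinsic flux gives
\[
\widetilde S_{C^0}(H')([dx])=\int_{\mathbb{T}^2}(\tilde h-\mathrm{id})_1\,\Omega,\qquad \widetilde S_{C^0}(H')([dy])=\int_{\mathbb{T}^2}(\tilde h-\mathrm{id})_2\,\Omega .
\]
Both vanish, so $\tilde h$ is a lift of $h$ with zero mean displacement, i.e.\ zero rotation vector with respect to the area measure. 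Since $h$ is area-preserving and isotopic to the identity, Franks' theorem yields two distinct fixed points.

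For the parenthetical case, take $h=\lim_{C^0}\phi_i$ with $\phi_i$ Hamiltonian. Each $\phi_i$ is the endpoint of an isotopy of flux zero, so $S_\Omega(\phi_i)=0$. We then need $S_{C^0}(h)=0$, which follows from continuity of $S_{C^0}$ together with $S_{C^0}=S_\Omega$ on smooth elements. Continuity is obtained from Proposition~\ref{pro1} and density of smooth isotopies (Lemma~\ref{lem:density_smooth_isotopies}): approximating isotopies have fluxes converging modulo the discrete group $\Gamma_\Omega$.

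The main obstacle is this continuity step. The limit isotopies of the $\phi_i$ need not converge as paths, so one must work locally. Use local contractibility of $\Homeo^\Omega_0$ to join $\phi_i$ to $h$ by short isotopies. Then Lemma~\ref{lem:key_estimate} bounds the flux of each short isotopy by its $\bar d$-size, which makes the flux of the short paths small. Since $\Gamma_\Omega$ is discrete, $S_{C^0}(h)$ is then a limit of zeros in the quotient torus, hence $0$.
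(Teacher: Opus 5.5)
Your proposal is correct and follows the same route as the paper. Like the paper, you identify $S_{C^0}(h)=0$ with the vanishing of the mean rotation vector of a suitable lift and then invoke Franks' theorem; your explicit lifting computation makes precise the identification of flux with rotation vector that the paper only asserts, and the same computation shows every loop has integer flux, so $\Gamma_\Omega=\mathbb{Z}^2$ is exhausted by translation loops. Your local-contractibility argument for the parenthetical case likewise supplies the continuity of $S_{C^0}$ (equivalently, that $C^0$-limits of Hamiltonian diffeomorphisms lie in $\ker S_{C^0}$), which the paper takes for granted.
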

	
	Conversely, if \(S_{C^0}(h) \neq 0\), the homeomorphism $h$ can be 
	fixed-point-free. For example, an irrational translation 
	\((x,y) \mapsto (x+\alpha, y+\beta)\) with \((\alpha, \beta) \notin 
	\mathbb{Z}^2\) has non-zero flux and no fixed points. Thus, our work 
	provides a concrete topological and cohomological criterion that separates 
	maps with forced fixed points (zero flux) from those that may be 
	fixed-point-free (non-zero flux).
	
	\subsection{Non-displaceable Lagrangian submanifolds on $\mathbb{T}^2$}
	
	Let $(\mathbb{T}^2,\Omega = dx\wedge dy)$ be the standard symplectic $2$-torus with the normalized area form. A Lagrangian submanifold of $\mathbb{T}^2$ is simply a simple closed curve. We say that $h\in\Homeo^\Omega_0(\mathbb{T}^2)$ \emph{displaces} a Lagrangian $L$ if $h(L)\cap L = \emptyset$.
	
	\begin{proposition}[Non-displaceability by zero-flux homeomorphisms]
		\label{prop:non_disp}
		Let $L\subset\mathbb{T}^2$ be a simple closed curve whose homology class 
		$[L]\in H_1(\mathbb{T}^2,\mathbb{Z})$ is non-zero.  Then there exists no 
		homeomorphism $h\in\ker S_{C^0}$ such that $h(L)\cap L = \emptyset$.
	\end{proposition}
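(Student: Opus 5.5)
The plan is to pass to the annular cover of $\mathbb{T}^2$ determined by $[L]$ and turn zero flux into a zero signed-area condition across a lift of $L$. If $h(L)\cap L=\emptyset$, the displaced lift lies strictly on one side, which forces positive signed area — a contradiction.

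\emph{Reduction to an isotopy with zero total flux.} Let $h\in\ker S_{C^0}$ and pick any isotopy $H$ from $\mathrm{id}$ to $h$. By Proposition~\ref{P4.4}, $\widetilde S_{C^0}(H)\in\widetilde\Gamma_\Omega$. As in the proof of Proposition~\ref{Connect}, concatenate $H$ with a loop whose flux is $-\widetilde S_{C^0}(H)$. This gives an isotopy $H'$ from $\mathrm{id}$ to $h$ with $\widetilde S_{C^0}(H')=0$. (On $\mathbb{T}^2$ these loops can be taken to be loops of translations, so $h$ itself is unchanged.)

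\emph{The annular cover and the signed-area functional.} Write $[L]=k\,v$ with $v\in H_1(\mathbb{T}^2,\mathbb{Z})$ primitive. Let $A=\mathbb{R}^2/\mathbb{Z}v\cong S^1\times\mathbb{R}$. Lifting $H'$ to $\mathbb{R}^2$ starting at the identity, and then to $A$, gives an isotopy $\widetilde H'$ with endpoint $\tilde h$, a homeomorphism of $A$ commuting with the deck group $\mathbb{Z}w$ and preserving the area form. Since a simple closed curve in the class $k v$ on $\mathbb{T}^2$ must have $k=\pm1$, $L$ lifts to an essential simple closed curve $\widetilde L\subset A$. It separates $A$ into an upper region $U$ and a lower region. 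Because $\tilde h$ moves points a bounded distance, the two sets $\tilde h(U)\setminus U$ and $U\setminus \tilde h(U)$ have finite area, so
\[
\mathcal{A}(\tilde h,\widetilde L):=\operatorname{area}\bigl(\tilde h(U)\setminus U\bigr)-\operatorname{area}\bigl(U\setminus\tilde h(U)\bigr)
\]
is well defined. The key identity I aim for is
\[
\mathcal{A}(\tilde h,\widetilde L)=\pm\bigl\langle \widetilde S_{C^0}(H'),\,\mathcal{PD}[L]\bigr\rangle ,
\]
that is, the flux of the isotopy through $L$.

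\emph{Proving the identity (main obstacle).} For smooth volume-preserving isotopies this is the classical interpretation of the flux as swept area, obtained via Stokes on the annulus swept out by $\widetilde L$. For the $C^0$ case I would use Lemma~\ref{lem:density_smooth_isotopies} to choose smooth $\Phi_i\to H'$ in $\bar d$. By the Proposition expressing $T_H$ as $\lim_i\langle[\alpha],\widetilde S_\Omega(\Phi_i)\rangle$, the right-hand sides then converge. For the left-hand side, continuity must be shown. Let $\tilde\phi_i$ be the time-one lifts. The regions $\tilde\phi_i(U)$ and $\tilde h(U)$ differ only inside a $\bar d(\Phi_i,H')$-neighbourhood of $\tilde h(\widetilde L)$. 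Using the $w$-equivariance, this reduces to a compact fundamental domain. Since $\tilde h$ is area-preserving and $\widetilde L$ is a curve of measure zero, the area of that neighbourhood tends to $0$. So $\mathcal A(\tilde\phi_i,\widetilde L)\to\mathcal A(\tilde h,\widetilde L)$. If $L$ is merely a topological curve, I would first replace it by a smooth essential curve; alternatively, one can check that the outer regularity estimate above already suffices.

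\emph{Conclusion.} The flux pairing is zero, so $\mathcal A(\tilde h,\widetilde L)=0$. Suppose $h(L)\cap L=\emptyset$. Then $\tilde h(\widetilde L)$ is disjoint from $\widetilde L$, since it projects to $h(L)$. Being connected and essential, it lies entirely in $U$ or entirely in the complement. In the first case $U\setminus\tilde h(U)=\emptyset$, while $\tilde h(U)\setminus U$ contains the open region between $\widetilde L$ and $\tilde h(\widetilde L)$. That region is nonempty and open, so it has positive area, giving $\mathcal A>0$. The second case gives $\mathcal A<0$ symmetrically. Either way we contradict $\mathcal A=0$, so no $h\in\ker S_{C^0}$ displaces $L$.
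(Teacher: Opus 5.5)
Your argument is correct, but it takes a different route from the paper's, and a more careful one. Both proofs begin with the same zero-flux normalization. The paper then stays on $\mathbb{T}^2$ and considers the swept chain $C=H([0,1]\times L)$. It asserts $\int_C\omega>0$ on the grounds that $h(L)\cap L=\emptyset$ gives $C$ non-empty interior. It then obtains $\int_C\omega=0$ by passing to the limit in the smooth flux--area relation $\int_{C_i}\omega=\langle[\alpha],\widetilde S_\Omega(\Phi_i)\rangle$, with $\alpha$ Poincar\'e dual to $[L]$.

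You instead lift to the annular cover $A$ and replace $\int_C\omega$ by the signed area $\mathcal{A}(\tilde h,\widetilde L)$. This quantity is defined for a homeomorphism, it is $C^0$-continuous by your neighbourhood estimate around $\tilde h(\widetilde L)$, and it acquires a strict sign as soon as $\tilde h(\widetilde L)$ lies on one side of $\widetilde L$. That supplies exactly what the paper's two steps leave unjustified:
\begin{itemize}
\item $\int_C\omega$ has no classical meaning for a merely continuous chain.
\item On $\mathbb{T}^2$ itself, non-empty interior yields no sign. Take $L=\{y=0\}$ and $h$ the vertical translation by $c\in(0,1)$. The isotopies through translations by $tc$ and by $t(c-1)$ both sweep chains with non-empty interior, but of opposite signed area.
\end{itemize}
So the contradiction genuinely needs the zero-flux lift together with the cover, and that is what your argument uses. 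The paper's version is shorter; yours is the one in which both the sign and the limit are actually established.

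A few small corrections.
\begin{itemize}
\item In the final case analysis your signs are swapped. If $\tilde h(\widetilde L)\subset U$, then $\tilde h(U)\subset U$, because $\tilde h$ is at bounded distance from the identity and so preserves the ends of $A$. Hence $\tilde h(U)\setminus U=\emptyset$, and it is $U\setminus\tilde h(U)$ that contains the open region between the two curves, giving $\mathcal{A}<0$. This is harmless, since only $\mathcal{A}\neq 0$ is needed.
\item The alternative you suggest for topological $L$, that outer regularity alone suffices, fails for Jordan curves of positive area (Osgood curves). There the area of the $\varepsilon$-neighbourhood of $\tilde h(\widetilde L)$ does not tend to $0$. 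Keep your first option instead. Since $d(h(L),L)>0$, a smooth simple closed curve $L'$ uniformly close enough to $L$ is homotopic to $L$, hence still essential, and still satisfies $h(L')\cap L'=\emptyset$.
\item Alternatively, area preservation shows that $\mathcal{A}(\tilde h,\cdot)$ takes the same value on every essential curve in $A$, so you may compute it on a smooth one.
\item The lifts $\tilde\phi_i$ should be taken along the isotopies $\Phi_i$, starting at the identity. That is what makes $\bar d(\Phi_i,H')\to 0$ force $\tilde\phi_i\to\tilde h$ uniformly on $A$, as your continuity estimate requires.
\end{itemize}
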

	
	\begin{proof}
		Assume, for contradiction, that such an $h$ exists.  Choose a topological 
		isotopy $H_0=\{h_0^t\}_{t\in[0,1]}$ from $\mathrm{id}_{\mathbb{T}^2}$ to $h$.  By 
		Proposition~\ref{P4.4}, $h\in\ker S_{C^0}$ implies 
		$\widetilde{S}_{C^0}(H_0)\in\widetilde{\Gamma}_\Omega$.  Hence the 
		associated flux homomorphism $T_{H_0}:H^1(\mathbb{T}^2;\mathbb{R})\to\mathbb{R}$ 
		satisfies $
		T_{H_0}([\beta]) = \bigl\langle [\beta],\, \widetilde{S}_{C^0}(H_0) \bigr\rangle ,$ 
		and therefore takes values in the discrete lattice 
		$\Gamma_\Omega$.  Because every element of 
		$\widetilde{\Gamma}_\Omega=\Gamma_\Omega$ is realized as the flux of a 
		smooth loop of volume-preserving diffeomorphisms, there exists a loop 
		$\{g_t\}_{t\in[0,1]}$ in $G_\Omega(\mathbb{T}^2)$ with $g_0=g_1=\mathrm{id}_{\mathbb{T}^2}$ 
		and  $
		\widetilde{S}_\Omega(g) = -\widetilde{S}_{C^0}(H_0).$ 	Concatenate $H_0$ with $g$ to obtain a new topological isotopy 
		$H=\{h^t\}_{t\in[0,1]}$:
		\[
		h^t = 
		\begin{cases}
			h_0^{2t}, & 0 \le t \le \tfrac12,\\[2mm]
			g_{2t-1}\circ h, & \tfrac12 \le t \le 1.
		\end{cases}
		\]
		Then $H$ is still an isotopy from $\mathrm{id}_{\mathbb{T}^2}$ to $h$, and by construction 
		$\widetilde{S}_{C^0}(H) = \widetilde{S}_{C^0}(H_0) + \widetilde{S}_\Omega(g) 
		= 0$ in $H^1(\mathbb{T}^2;\mathbb{R})$. Now let $\alpha$ be a closed $1$-form Poincaré dual to $[L]$, so that 
		$\int_L \beta = \int_{\mathbb{T}^2} \alpha \wedge \beta$ for every closed 
		$1$-form $\beta$; in particular $\int_L \alpha = 1$ and $[\alpha]\neq 0$.  
		Consider the $2$-chain  $
		C := H([0,1]\times L)\subset\mathbb{T}^2$ 
		parameterized by $(t,p)\mapsto h^t(p)$.  Its boundary is 
		$\partial C = h(L) - L$, and because $h(L)\cap L=\emptyset$, the 
		chain $C$ has non-empty interior.  As the standard area form $\omega$ 
		is everywhere positive, we obtain 
		\begin{equation}\label{Eq:Area1}
			\int_C \omega > 0.
		\end{equation}
		
		On the other hand, $H$ is the $C^0$-limit of a sequence of smooth 
		symplectic isotopies $\Phi_i$.  For each $\Phi_i$ the classical 
		flux-area relation on a surface gives
		\[
		\int_{C_i} \omega \;=\; \bigl\langle [\alpha],\, 
		\widetilde{S}_\Omega(\Phi_i) \bigr\rangle ,
		\]
		where $C_i := \Phi_i([0,1]\times L)$.  Passing to the limit $i\to\infty$, 
		the left-hand side converges to $\int_C \omega$ (by uniform convergence of 
		the isotopies) and the right-hand side converges to 
		$\langle [\alpha], \widetilde{S}_{C^0}(H)\rangle = 0$.  Hence, $
		\int_C \omega = 0,$ 
		which contradicts (\ref{Eq:Area1}).  Therefore no such homeomorphism $h$ exists.
	\end{proof}
	
	\subsection{Strict Inclusion of Hamiltonian Homeomorphisms}
	\begin{lemma}
		\label{lem:strict_inclusion}
		Let $(M, \omega)$ be a closed symplectic manifold of dimension $2n$ with non-trivial $H^{2n -1}(M; \mathbb{R})$. Then, the inclusion of the group of Hamiltonian homeomorphisms into the identity component of the group of symplectic homeomorphisms is strict:
		\[
		\mathrm{Hameo}(M, \omega) \subsetneq \mathrm{Sympeo}_0(M, \omega).
		\]
	\end{lemma}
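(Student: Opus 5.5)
The plan is to exhibit an explicit element of $\mathrm{Sympeo}_0(M,\omega)$, namely the time-one map of a small symplectic (non-Hamiltonian) flow, and to show it cannot lie in $\mathrm{Hameo}(M,\omega)$. The obstruction will be a $C^0$ flux that vanishes on Hamiltonian homeomorphisms. The inclusion $\mathrm{Hameo}(M,\omega)\subseteq \mathrm{Sympeo}_0(M,\omega)$ itself is standard. A Hamiltonian homeomorphism is the endpoint of a $C^0$-limit of Hamiltonian isotopies, hence of a continuous path of symplectic homeomorphisms starting at $\mathrm{id}_M$. So only strictness needs proof.

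\textbf{Construction.} By Poincaré duality, $H^{2n-1}(M;\mathbb{R})\neq 0$ gives $H^1(M;\mathbb{R})\neq 0$. Fix a closed, non-exact $1$-form $\beta$. Let $X_s$ be the symplectic vector field defined by $\iota_{X_s}\omega = s\beta$ for $s\in\mathbb{R}$, and let $\phi_s$ be the time-one map of its flow $\Phi_s$. Then $\phi_s$ is a symplectic diffeomorphism isotopic to the identity, so $\phi_s\in \mathrm{Sympeo}_0(M,\omega)$. Its smooth symplectic flux is $\widetilde{\mathrm{Flux}}(\Phi_s)=s[\beta]$. This is the symplectic analogue of the section in Lemma~\ref{lem:local_section}.

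\textbf{Flux obstruction.} Next I would use the $C^0$-transport theory of \cite{TMH}, which is the symplectic counterpart of Section~3. For a $C^0$-limit $H$ of smooth symplectic isotopies $\Phi_i$, the flux is defined as the limit of the smooth fluxes. Lemma~\ref{lem:key_estimate}, which is stated for arbitrary smooth isotopies, makes this limit well defined: the averaged quantities $\int_M \mathcal{F}_\alpha(\Phi_i)(1)\,\omega^n$ converge, exactly as in Proposition~\ref{pro1} and the factorization proposition of Section~3. Applying this with $\Phi_i$ Hamiltonian, whose smooth fluxes are all zero, shows that any $C^0$-Hamiltonian isotopy has zero flux. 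Consequently every $h\in\mathrm{Hameo}(M,\omega)$ has flux in the image of $\pi_1(\mathrm{Sympeo}_0)$, i.e.\ it lies in the kernel of the descended map
\[
\mathrm{Sympeo}_0(M,\omega)\longrightarrow H^1(M;\mathbb{R})/\Gamma_\omega .
\]
Here the descended map is obtained by the same diagram construction as $S_{C^0}$ in Section~3, and the loop group $\Gamma_\omega$ is discrete by the $C^0$-rigidity argument of Theorem~\ref{thm:C0rigidity}, run through Fathi's mass flow and Poincaré duality. Since $\Gamma_\omega$ is discrete and $[\beta]\neq 0$, I can choose $s>0$ small enough that $s[\beta]\notin\Gamma_\omega$. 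Then $\phi_s$ has non-zero flux class, so $\phi_s\notin \mathrm{Hameo}(M,\omega)$, which proves strictness.

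\textbf{Main obstacle.} The delicate step is well-definedness of the descended flux on $\mathrm{Sympeo}_0$. A Hamiltonian homeomorphism $h$ might a priori also be reached by some other continuous symplectic isotopy whose flux equals $s[\beta]$. This must be absorbed into the group of loop-fluxes, which requires homotopy invariance of the $C^0$ flux as in Lemmas~\ref{lem:homotopy_invariance} and~\ref{lem:homotopy_invariance_flux}. It also requires that the $C^0$ loop-flux group coincide with the discrete smooth group $\Gamma_\omega$, as in Theorem~\ref{thm:C0rigidity}. If the symplectic $C^0$ rigidity is unavailable in this form, my fallback is to reduce to the volume case already treated in this paper. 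I would use $\Omega=\omega^n/n!$ and observe that the volume flux of $\Phi_s$ is $s[\beta\wedge\omega^{n-1}]$, up to a constant. I would then need this class to be non-zero, which is where a non-degeneracy (hard-Lefschetz-type) condition might enter. I expect this comparison step to be the real difficulty, so I would attempt the direct symplectic argument first.
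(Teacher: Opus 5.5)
\textbf{The main route.} Your ``main route'' is not actually different from your fallback, and this is where the proposal breaks. Let $\Phi$ be a smooth symplectic isotopy with $\iota_{X_t}\omega=\beta_t$, and take $\Omega=\omega^n/n!$ (the normalisation only changes a positive constant). Invariance of $\Omega$ and the identity $\alpha(X)\,\Omega=\alpha\wedge\iota_X\Omega$ give
\[
\int_M \mathcal{F}_\alpha(\Phi)(1)\,\Omega=\int_0^1\!\!\int_M \alpha(X_t)\,\Omega\,dt=\frac{1}{(n-1)!}\int_M \alpha\wedge\bar\beta\wedge\omega^{n-1},\qquad \bar\beta:=\int_0^1\beta_t\,dt .
\]
So the averaged transport quantities you pass to the limit pair $[\alpha]$ with the \emph{volume} flux $[\bar\beta]\cup[\omega]^{n-1}/(n-1)!\in H^{2n-1}(M;\mathbb{R})$. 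They do not pair with the symplectic flux $[\bar\beta]\in H^1(M;\mathbb{R})$. Likewise, Fathi's mass flow, and hence the rigidity argument of Theorem~\ref{thm:C0rigidity}, only gives discreteness of the volume flux group $\Gamma_\Omega\subset H^{2n-1}(M;\mathbb{R})$.

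Nothing in your plan produces a $C^0$-continuous flux with values in $H^1(M;\mathbb{R})$ and a discrete loop group; that is closely related to the symplectic $C^0$ flux conjecture. Consequently the step ``$\phi_s$ has non-zero flux class'' is justified only when $[\beta]\cup[\omega]^{n-1}\neq 0$. In that case $s[\beta]\cup[\omega]^{n-1}/(n-1)!$ avoids the discrete group $\Gamma_\Omega$ for small $s\neq 0$, and so $\phi_s\notin\ker S_{C^0}\supseteq\mathrm{Hameo}(M,\omega)$.

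\textbf{The condition can fail.} This non-degeneracy can fail even though $H^{2n-1}(M;\mathbb{R})\neq 0$. Take the $4$-dimensional nilmanifold whose left-invariant coframe satisfies $de^1=de^2=0$, $de^3=e^1\wedge e^2$, $de^4=e^1\wedge e^3$, with symplectic form $\omega=e^1\wedge e^4+e^2\wedge e^3$. Here $b_1=b_3=2$, and $H^1$ is spanned by $[e^1],[e^2]$. Moreover $e^1\wedge\omega=d(e^2\wedge e^4)$ and $e^2\wedge\omega=-d(e^3\wedge e^4)$. Hence every symplectic isotopy has zero volume flux, and $S_{C^0}$ vanishes on all of $\mathrm{Symp}_0(M,\omega)$. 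It therefore cannot detect any smooth element outside $\mathrm{Hameo}(M,\omega)$, so neither your main route nor your fallback closes the argument here.

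\textbf{Comparison with the paper.} The paper's proof is exactly your fallback. It restricts the volume flux $S_{C^0}$ to $\mathrm{Sympeo}_0(M,\omega)$ and gets $\mathrm{Hameo}(M,\omega)\subset\ker S_{C^0}$ by continuity. It then produces $g\in\mathrm{Symp}_0(M,\omega)$ with $S_\Omega(g)\neq 0$ ``by the surjectivity of the smooth flux''. That surjectivity concerns volume-preserving isotopies, not symplectic ones, so the paper passes over precisely the point you flagged.

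What your proposal, and the paper's proof, actually establish is the lemma under the extra hypothesis that $a\mapsto a\cup[\omega]^{n-1}$ is non-zero on $H^1(M;\mathbb{R})$. This holds, for example, on surfaces, and on K\"ahler manifolds by hard Lefschetz. The general statement would need a genuinely symplectic $C^0$ flux, which neither the transport theory nor Fathi's mass flow supplies.
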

	
	\begin{proof}
		The volume form $\Omega$ here is induced by the symplectic form $\omega$. As established in the $C^0$-transport framework, the homomorphism $S_\Omega$ extends continuously to the $C^0$-closure of the group, defining the topological flux homomorphism:
		\[
		S_{C^0} : \mathrm{Sympeo}_0(M, \omega) \longrightarrow H^{2n -1}(M; \mathbb{R}) / \Gamma_\Omega.
		\]
		By definition, $\mathrm{Hameo}(M, \omega)$ is the closure of $\mathrm{Ham}(M, \omega)$ under the Hamiltonian topology \cite{OhMuller2007}. Because the topological flux $S_{C^0}$ is continuous with respect to the $C^0$-topology (and hence continuous under the stronger Hamiltonian topology), and it vanishes on $\mathrm{Ham}(M, \omega)$, it must vanish identically on $\mathrm{Hameo}(M, \omega)$. Thus: 
		\[
		\mathrm{Hameo}(M, \omega) \subset \ker(S_{C^0}).
		\]
		Since $H^{2n-1}(M; \mathbb{R}) \neq 0$, the quotient $H^{2n-1}(M; \mathbb{R}) / \Gamma_\Omega$ is non-trivial. By the surjectivity of the smooth flux, there exists a smooth symplectic diffeomorphism $g \in \mathrm{Symp}_0(M, \omega)$ such that $S_{C^0}(g) = \mathrm{S}_\Omega(g) \neq 0$. 
		Because $g$ is smooth, $g \in \mathrm{Sympeo}_0(M, \omega)$. However, since $S_{C^0}(g) \neq 0$, $g \notin \ker(S_{C^0})$, which implies $g \notin \mathrm{Hameo}(M, \omega)$. Therefore, the inclusion is strict.
	\end{proof}
	
	\subsection{Floer-Novikov cohomology and continuous spectral invariants}
	
	The results of this paper open several exciting directions for future research.
	
	\subsubsection{Floer-Novikov cohomology and the $C^0$-Flux Group}
	
	In the smooth category, the proof that the flux group $\Gamma_\omega$ is discrete (the Flux Conjecture) relies heavily on hard symplectic machinery: Floer-Novikov cohomology, as demonstrated by Ono \cite{Ono}. For non-exact symplectic manifolds, the classical action functional is multi-valued, and Floer homology must be defined over a Novikov ring that explicitly encodes the flux group. Our Theorem \ref{thm:C0rigidity} proves that the flux group is $C^0$-rigid ($\widetilde{\Gamma}_\Omega = \Gamma_\Omega$). This strongly suggests that the underlying algebraic structure of Floer-Novikov cohomology might possess a strictly topological extension. A major open question is whether one can define a "continuous Floer-Novikov theory" for $\Homeo^\Omega_0(M)$ where our 1-cocycle $\tau_\alpha$ plays the role of the multi-valued action functional's topological displacement.
	
	\begin{remark}[Topological rigidity of the Novikov ring]
		\label{rem:novikov_rigidity}
		In smooth Floer-Novikov theory for a non-exact symplectic manifold
		\((M,\omega)\), the action functional is multi-valued; its periods form the
		flux group \(\Gamma_\omega \subset \mathbb{R}\). To obtain a well-defined
		Floer homology one constructs the usual Novikov ring
		\(\Lambda_{\Gamma_\omega}\) from the period group \(\Gamma_\omega\).
		Theorem~\ref{thm:C0rigidity} proves that \(\Gamma_\omega\) is \(C^{0}\)-rigid,
		i.e., it does not change when one passes from the smooth group 
		\(G_\omega(M)\) to its \(C^{0}\)-closure \(\Homeo^\Omega_0(M)\). 
		Consequently, the Novikov ring \(\Lambda_{\Gamma_\omega}\) itself is 
		\emph{topologically rigid}: the same ring governs the prospective Floer 
		theory of the topological group \(\Homeo^\Omega_0(M)\). This justifies 
		the use of the classical Novikov ring in any \(C^{0}\)-continuous extension 
		of Floer homology and shows that the algebraic structure required for such 
		a theory is already present in the \(C^{0}\)-limit.
	\end{remark}
	
	\subsubsection{Continuous Spectral Invariants and $C^0$-Action Functionals}
	A major triumph in $C^0$-symplectic topology is the extension of 
	Floer-theoretic \emph{spectral invariants} to the group of Hamiltonian 
	homeomorphisms (see Oh \cite{Y.Oh}, Buhovsky-Seyfaddini \cite{Buh}, 
	and Viterbo \cite{Vit}). Spectral invariants extract critical values of 
	the symplectic action functional from Floer homology classes. 
	The $1$-cocycle $\tau_\alpha(h)$ constructed in our Isomorphism Theorem 
	assigns to each $x \in M$ a real number representing a localized 
	"flux displacement" with zero mean. When $h$ is a Hamiltonian 
	homeomorphism, the topological flux vanishes ($[\tau_\alpha] = 0$). 
	However, on the broader group $\Homeo^\Omega_0(M)$, the values of 
	$\tau_\alpha(h)(x)$ are expected to relate to the shifts in the 
	continuous Floer action functional across different homotopy classes 
	of paths. This connection would bridge macroscopic $C^0$-geometry 
	and Floer theory, providing a new link between topological dynamics 
	and symplectic topology.
	
	\subsubsection{Persistent Homology and Floer Barcodes}
	
	In recent years, the application of persistent homology to Filtered Hamiltonian Floer theory has revolutionized the study of $C^0$-symplectic dynamics. Pioneered by Polterovich, Shelukhin \cite{PS}, and Usher \cite{Usher}, the Floer action functional generates a persistence module whose "barcodes" (intervals representing the birth and death of Floer homology classes) are remarkably stable under the $C^0$-topology. Because our framework successfully brings the continuous topological flux into the $C^0$-regime, we can formulate a concrete algebraic statement connecting the topological flux $S_{C^0}$ (and the cocycle $\tau_\alpha$) to the translational shifts of Floer barcodes in dimension 2. In 2D (on a surface $\Sigma_g$), an area-preserving diffeomorphism with non-zero flux has a multi-valued action functional. Its Floer homology is defined over a Novikov ring, which manifests geometrically as a \emph{periodic barcode} (an infinite barcode with translational symmetry). The translational shift (or period) of this barcode is exactly given by the flux. Because we have proven that the flux group is $C^0$-rigid and continuous, we can formulate the following relationship: Floer barcodes are intrinsically well-defined and finitely bounded only when the symplectic action functional is single-valued---which occurs precisely when the isotopy lies in the kernel of our topological flux $\ker S_{C^0}$. Thus, the topological flux acts as a classical ``gatekeeper'': when $[\tau_\alpha] \neq 0$, the macroscopic homological displacement forces the Floer barcodes to be periodic (requiring Novikov rings). When $[\tau_\alpha] = 0$, the dynamics localize, allowing the finite bottleneck distance of Floer barcodes to generate robust $C^0$-metrics.
	
	\begin{theorem}[$C^0$ Flux Conjecture for volume-preserving homeomorphisms]
		\label{thm:C0_flux_conjecture}
		Let $(M,\Omega)$ be a closed oriented manifold of dimension $n \ge 2$. 
		Then the topological flux group $\widetilde{\Gamma}_\Omega$ is discrete 
		in $H^{n-1}(M;\mathbb{R})$. Equivalently, the subgroup 
		$\ker S_{C^0} \subset \Homeo^\Omega_0(M)$ is $C^0$-closed in the 
		identity component of the group of volume-preserving homeomorphisms.
	\end{theorem}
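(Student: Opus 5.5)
The proof has two parts: first show that $\widetilde{\Gamma}_\Omega$ is discrete, then show that discreteness is equivalent to $\ker S_{C^0}$ being $C^0$-closed.

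For discreteness I would invoke Theorem~\ref{thm:C0rigidity}, which already gives $\widetilde{\Gamma}_\Omega=\Gamma_\Omega$. It identifies $\widetilde{S}_{C^0}$ on loops with the Poincaré dual of Fathi's mass flow, whose image on $\pi_1(\Homeo_0(M,\mu))$ is discrete. Discreteness of $\widetilde{\Gamma}_\Omega$ then follows at once, since $\mathcal{PD}$ is a linear isomorphism. For $n=4$ I would read $\Homeo^\Omega_0(M)$ as the $C^0$-closure of $G_\Omega(M)$, as in the earlier remark.

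For the implication from discreteness to closedness, let $h_k\in\ker S_{C^0}$ with $h_k\to h$ in $C^0$, where $h\in\Homeo^\Omega_0(M)$. I would show that $S_{C^0}$ is continuous as a map into $H^{n-1}(M;\mathbb{R})/\widetilde{\Gamma}_\Omega$. The plan is to use local contractibility of $\Homeo^\Omega_0(M)$ (Fathi) to join $h_k^{-1}h$ to $\mathrm{id}_M$ by an isotopy $G_k$ with $\bar d(G_k,\mathrm{id})\to 0$. The intrinsic formula $\widetilde{S}_{C^0}(G_k)([\alpha])=\int_M\int_{c_x^{G_k}}\alpha\,\Omega$ will not work directly, because the orbits can be long even when they stay $C^0$-close to constant paths. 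So I would instead use Lemma~\ref{lem:key_estimate} applied to smooth approximants, via Proposition~\ref{pro1}. This gives $|\mathcal{R}(G_k,\alpha)|\le\|\alpha\|_\infty\,\bar d(G_k,\mathrm{id})$, hence $\widetilde{S}_{C^0}(G_k)\to 0$. Then $S_{C^0}(h)=S_{C^0}(h_k)+S_{C^0}(h_k^{-1}h)=\pi'(\widetilde{S}_{C^0}(G_k))\to 0$. Since $\widetilde{\Gamma}_\Omega$ is discrete, hence closed, the quotient is Hausdorff and points are closed in it, so $S_{C^0}(h)=0$.

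For the converse I would argue by contrapositive. If $\widetilde{\Gamma}_\Omega$ were not discrete, take $v_k\in\widetilde{\Gamma}_\Omega\setminus\{0\}$ with $v_k\to 0$. I would write $v_k=a_k+b_k$, with $a_k$ generic (so that $\pi'(a_k)\ne 0$) and $b_k\to 0$, and use the section of Lemma~\ref{lem:local_section} to produce $\psi_{a_k}(1)\to\mathrm{id}_M$. By continuity this would accumulate non-kernel elements on kernel elements, contradicting closedness, provided one arranges the construction correctly. This direction is secondary, since discreteness holds anyway.

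The main obstacle is the continuity step, and specifically choosing isotopies $G_k$ that are short in $\bar d$ and not merely short at their endpoints. For this I rely on local contractibility of $\Homeo^\Omega_0(M)$, combined with the homotopy invariance of Lemma~\ref{lem:homotopy_invariance}, so that the flux does not depend on which short isotopy is chosen.
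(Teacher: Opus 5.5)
Your proposal follows the paper's proof. Discreteness is taken from Theorem~\ref{thm:C0rigidity}, and closedness of $\ker S_{C^0}$ follows from continuity of $S_{C^0}$ together with the fact that points are closed in the quotient by the discrete (hence closed) group $\widetilde{\Gamma}_\Omega$. Your local-contractibility argument with Lemma~\ref{lem:key_estimate} usefully supplies the continuity that the paper merely asserts.

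Two small remarks. First, your converse sketch is backwards: non-kernel elements accumulating on kernel elements do not contradict closedness of $\ker S_{C^0}$. As you say, though, the converse is unnecessary once both statements are proved outright. Second, the intrinsic orbit formula does work directly for short isotopies. An orbit staying in a ball of radius below $r(g)/2$ is homotopic rel endpoints to a short geodesic, so its $\alpha$-integral is bounded by $\|\alpha\|_\infty\,\bar d(G_k,\mathrm{id})$. This would spare you the smooth approximants and hence the $n\neq 4$ restriction.
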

	
	\begin{proof}
		By Theorem~\ref{thm:C0rigidity}, the topological flux group coincides with 
		the smooth flux group: $\widetilde{\Gamma}_\Omega = \Gamma_\Omega$, hence  $\widetilde{\Gamma}_\Omega$ is discrete. 
		The equivalence with $\ker S_{C^0}$ being $C^0$-closed follows from 
		standard topological group theory: since $S_{C^0}$ is continuous,  $ \ker S_{C^0} = S_{C^0}^{-1}(\{0\}),$ 
		is closed in $\Homeo^\Omega_0(M)$ if and only if $\{0\}$ is closed in 
		the quotient $H^{n-1}(M;\mathbb{R})/\widetilde{\Gamma}_\Omega$.  
		The point set $\{0\}$ is closed in the quotient because 
		$\widetilde{\Gamma}_\Omega$ is discrete. Thus, $\ker S_{C^0}$ is $C^0$-closed.
	\end{proof}
	
	The $C^0$ Flux Conjecture, a central motivating question in topological 
	dynamics, posits that the topological flux group is discrete, or 
	equivalently, that the subgroup of homeomorphisms with vanishing flux 
	is $C^0$-closed in the identity component of the homeomorphism group. Particularly striking is the implication for dimension two. On a closed 
	oriented surface $\Sigma_g$, an area form is simultaneously a volume 
	form and a symplectic form, and the volume flux (in $H^1$) coincides 
	with the symplectic flux. Consequently, our framework provides a complete, classical resolution of the symplectic $C^0$ Flux Conjecture for all closed surfaces of non-trivial genus, bypassing the need for Floer-homological machinery required in higher-dimensional symplectic topology.
	
	\begin{remark}[Compactly supported case]
		\label{rem:compactly_supported}
		Müller's approximation theorem extends to non-compact manifolds and manifolds
		with boundary for compactly supported homeomorphisms \cite[Theorem~4]{Mull-1}.
		If \((M,\Omega)\) has finite total volume with respect to \(\Omega\), then the main results of the present paper carry over 
		to the group
		\(\Homeo^\Omega_{0, c}(M)\) of compactly supported volume-preserving 
		homeomorphisms that are \(C^{0}\)-limits of compactly supported smooth 
		diffeomorphisms. In this setting, the flux homomorphism takes values in the de Rham cohomology
		with compact support \(H^{n-1}_c(M,\mathbb{R})\), and the isomorphism theorem
		becomes
		\[
		H^{1}\bigl(\Homeo^\Omega_{0, c}(M),\,
		\mathcal{C}_{0,c}(M,\mathbb{R})\bigr) \;\cong\; H^{1}_c(M,\mathbb{R}),
		\]
		where \(\mathcal{C}_{0,c}(M,\mathbb{R})\) denotes the space of compactly
		supported continuous functions with zero mean (with respect to the finite 
		total volume). The \(C^{0}\)-rigidity of the flux group remains true in this 
		setting, with the flux lattice \(\Gamma_\Omega\) now viewed as a discrete 
		subgroup of \(H^{n-1}_c(M;\mathbb{R})\). If the total volume is infinite, the averaging procedure used to define the 
		cocycle \(\tau_\alpha\) is no longer available, as the mean of a compactly 
		supported function is not well-defined in the usual sense. A different 
		framework, such as pairing with compactly supported closed forms, may be 
		required in this case. We leave this extension to future work.
	\end{remark}

	\section*{Declarations}
	\subsection*{Ethics approval}
	Not applicable. This article does not contain any studies involving human participants or animals.
	\subsection*{Competing interests}
	The author declares that there are no competing interests.
	
	\subsection*{Funding}
	The author received no financial support for the research, authorship, and/or publication of this article.
	\vspace{2em}
	

\end{document}